\newtheorem{theorem}{Theorem}
\newtheorem*{theorem*}{Theorem}
\newtheorem{cor}[theorem]{Corollary}
\newtheorem*{cor*}{Corollary}
\newtheorem{prop}[theorem]{Proposition}
\newtheorem*{prop*}{Proposition}
\newtheorem{lemma}[theorem]{Lemma}
\newtheorem*{lemma*}{Lemma}
\newtheorem*{quest*}{Question}
\newtheorem*{conj*}{Conjecture}
\newtheorem{fact}[theorem]{Fact}
\newtheorem*{fact*}{Fact}
\newtheorem*{claim*}{Claim}
\theoremstyle{definition}
\newtheorem{defn}[theorem]{Definition}
\newtheorem*{defn*}{Definition}
\newtheorem{exs}[theorem]{Examples}
\newtheorem*{exs*}{Examples}
\newtheorem{ex}[theorem]{Example}
\newtheorem*{ex*}{Example}
\newtheorem*{notn*}{Notation}
\newtheorem*{assumption*}{Assumption}
\newtheorem{rmk}[theorem]{Remark}
\newtheorem*{rmk*}{Remark}
\newtheorem{rmks}[theorem]{Remarks}
\newtheorem*{rmks*}{Remarks}
\DeclareMathOperator{\an}{an}
\DeclareMathOperator{\CC}{\mathbb{C}}
\DeclareMathOperator{\RR}{\mathbb{R}}
\DeclareMathOperator{\NN}{\mathbb{N}}
\DeclareMathOperator{\ZZ}{\mathbb{Z}}
\DeclareMathOperator{\LL}{\mathbb{L}}
\DeclareMathOperator{\A}{\mathcal{A}}
\DeclareMathOperator{\R}{\mathcal{R}}
\DeclareMathOperator{\G}{\mathcal{G}}
\DeclareMathOperator{\supp}{\textrm{supp}}
\DeclareMathOperator{\arccot}{\textrm{arccot}}
\DeclareFontFamily{U}{mathc}{}
\DeclareFontShape{U}{mathc}{m}{it}%
{<->s*[1.03] mathc10}{}
\DeclareMathAlphabet{\mathcal}{U}{mathc}{m}{it}
\newcommand{\into}{\longrightarrow}
\newcommand{\Ps}[2]{\mathbb{#1}\left[\!\left[#2\right]\!\right]}
\newcommand{\rest}[1]{\!\!\upharpoonright_{#1}}
\title{Definability of complex functions in o-minimal structures}
\author{Adele Padgett}
 \address{Kurt G\"odel Research Center, Universit\"at Wien, 1090 Wien, Austria}
 \email{adele.lee.padgett@univie.ac.at}
\author{Patrick Speissegger}
\address{Department of Mathematics and Statistics, McMaster University, Hamilton, ON, Canada}
\email{speisse@mcmaster.ca}
\date{\today}
\begin{document}

\begin{abstract}
    We prove that holomorphic continuations of functions in the classes $\mathbf{an}^*$ and $\mathcal{G}$ are definable in the o-minimal structures $\mathbb{R}_{\an^*}$ and $\mathbb{R}_{\mathcal{G}}$ respectively. More specifically, we give complex domains on which the holomorphic continuations are definable and show these domains are optimal. As an application, we describe optimal domains on which the Riemann zeta function $\zeta$ is definable in o-minimal expansions of $\mathbb{R}_{\an^*,\exp}$ and on which the Euler Gamma function $\Gamma$ is definable in o-minimal expansions of $\mathbb{R}_{\mathcal{G},\exp}$.
\end{abstract}

\maketitle

\section{Introduction}

The notion of an o-minimal structure provides a natural setting to which many classical theorems from real algebraic geometry generalize. For instance, sets in o-minimal structures can be decomposed into finitely many cells and single-variable functions are continuous outside of a finite set. The power of o-minimality is that it preserves many tameness properties from real algebraic geometry while also capturing interesting analytic objects. For example, the globally subanalytic sets form an o-minimal structure $\mathbb{R}_{\mathrm{an}}$ (Van den Dries \cite{vdDries:Tarski--SeidenbergThm}), which can be expanded to another o-minimal structure called $\mathbb{R}_{\mathrm{an},\exp}$ generated by these sets along with the graph of the real exponential function (Van den Dries and Miller \cite{vDDriesMillerRanexp}).

O-minimality can also handle holomorphic objects. Start with the semialgebraic sets, which form an o-minimal structure $\mathbb{R}_{\mathrm{alg}}$, and identify the complex numbers with $\mathbb{R}^2$. By defining addition and multiplication of complex numbers in terms of addition and multiplication of real and imaginary parts, one can work with complex algebraic varieties in $\mathbb{R}_{\mathrm{alg}}$. Similarly, one can work with the complex exponential function in $\mathbb{R}_{\an,\exp}$, as long as it is restricted to a horizontal strip, because $\exp(x+iy) = \exp(x)(\cos(y)+i\sin(y))$ can be evaluated using the real exponential function along with $\cos$ and $\sin$ restricted to an interval. (The graphs of the full sine and cosine functions are not globally subanalytic.)
As the complex exponential function is $2\pi i$-periodic, this captures its behavior anywhere in the complex plane.

One of the most striking results about o-minimal structures is a version of Chow's theorem in complex analysis. Chow's original theorem states that every analytic subvariety of complex projective space is actually an algebraic variety \cite[Theorem V]{Chow'sThm}. The version for o-minimal expansions of the real field is that if a closed analytic subset of a complex algebraic variety is \emph{definable in an o-minimal structure}, then it is a complex algebraic set.
This follows from a result of Stoll that transcendental complex analytic sets cannot satisfy a certain growth bound on the volume of their intersection with balls of increasing size \cite{stoll:VolumeBoundforAlgebraicSets}, together with the fact that o-minimal subsets of $\RR^n$ do satisfy the growth bound (Comte and Yomdin \cite[Corollary 5.2]{comte-yomdin:O-minVolumes}).
% (Peterzil and Startchenko \cite[Theorem 5.1]{peterzil-starchenko:chow}).
This so-called Definable Chow Theorem is a crucial piece of the Pila-Zannier strategy, which has been highly successful in using functional transcendence results to help solve problems in Diophantine geometry, like the Andr\'e-Oort conjecture (now a theorem, see Pila \cite{PilaAndreOort}, Tsimerman \cite{tsimerman:andre--ort} and Pila et al. \cite{fullAndreOort}). 

In practice, the Definable Chow Theorem is applied to analytic sets defined using holomorphic functions of arithmetic interest, like the exponential function or Klein $j$ function. Such analytic sets are definable in o-minimal structures, because large enough pieces of these holomorphic functions are definable there. As mentioned above, the restriction of the exponential function to a horizontal strip is definable in $\mathbb{R}_{\mathrm{an},\exp}$, and the $j$ function is also definable in $\RR_{\mathrm{an},\exp}$ when restricted to suitable domains.

However, in general, an o-minimal structure that defines a real-analytic function may not define any of its holomorphic continuations. For example, the o-minimal structure $\mathbb{R}_{\exp}$, which defines the real exponential function but none of the extra analytic structure of $\mathbb{R}_{\mathrm{an}}$, does not define the complex exponential function restricted to any open set in $\mathbb{C}$, see Bianconi \cite{BianconiNondefinability}. 
The question of which real-analytic functions have holomorphic continuations within a particular o-minimal structure has been studied for $\mathbb{R}_{\mathrm{an}}$ by Wilkie  \cite{WilkieHoloContinuations} and by Kaiser \cite{MR3509471}, and for $\mathbb{R}_{\mathrm{an},\exp}$ by Kaiser and Speissegger  \cite{MR3934482} and by Opris \cite{opris2024globalcomplexificationrealanalytic}. 

The main results of this paper, Theorems \ref{cgp_dfbl} and \ref{gev_dfbl} below, partially answer this question for the \hbox{o-minimal} structures $\mathbb{R}_{\mathrm{an}^*}$ and $\mathbb{R}_{\mathcal{G}}$ (see Van den Dries and Speissegger \cite{Dries:1998xr} and \cite{RealO-minGamma}, respectively). These structures are proper expansions of $\mathbb{R}_{\mathrm{an}}$, and their definitions, as well as the statements of Theorems \ref{cgp_dfbl} and \ref{gev_dfbl}, will be given in later sections. The respective expansions $\mathbb{R}_{\mathrm{an}^*,\exp}$ and $\mathbb{R}_{\mathcal{G},\exp}$ of these structures by the exponential function remain \hbox{o-minimal} \cite[Theorem B]{RealO-minGamma}, and they define, respectively, the Riemann zeta function $\zeta|_{(1,\infty)}$ on real numbers greater than one \cite[Corollary 10.11]{RealO-minGamma} and the Euler Gamma function $\Gamma|_{(0,\infty)}$ on the positive real line \cite[Example 8.1]{RealO-minGamma}.

As a consequence of Theorems \ref{cgp_dfbl} and \ref{gev_dfbl}, we obtain a precise description of the regions on which the complex $\zeta$ and $\Gamma$ functions are definable in these o-minimal structures: we denote by $\Re w$ and $\Im w$ the real and imaginary parts of $w \in \CC$, and we call a set $\Omega \subseteq \CC$ \textbf{$i$-bounded} if the set $\{\Im w:\ w \in \Omega\}$ is bounded. 

\begin{theorem}\label{zeta_dfbl}
	\begin{enumerate}
	\item For any $t \in\RR$ and $s>0$, the restriction of $\zeta$ to the set $\{z : \Re z>t, |\Im z|< s\}$ is definable in $\RR_{\an^*, \exp}$. 
	\item The restriction of $\zeta$ to any $i$-unbounded set $\Omega \subseteq \{w \in \CC:\ \Re w > 2\}$ is not definable in any o-minimal expansion of $\ \RR_{\an^*,\exp}$.
	\end{enumerate}
\end{theorem}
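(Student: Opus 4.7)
The plan is to handle parts (1) and (2) separately: (1) is a positive definability result combining Theorem \ref{cgp_dfbl} with a classical meromorphic continuation, while (2) is a non-definability argument based on o-minimal monotonicity.

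For part (1), I split on the value of $t$. If $t \geq 1$, then for every $w$ in the strip $\{\Re w > t, |\Im w| < s\}$ one has $E(w) = (e^{-\Re w}, \Im w) \in S_{\bar\LL}(1/e, s)$; Theorem \ref{cgp_dfbl} gives that $f^{\zeta}\rest{S_{\bar\LL}(1/e, s)}$ is definable in $\RR_{\an^*}$, and $E$ on any horizontal strip is definable in $\RR_{\an^*, \exp}$, so by (\ref{zeta_def}) the composition $\zeta = f^{\zeta} \circ E$ is definable on the strip. If $t < 1$, I fix an integer $K$ with $1 - 2K < t$ and apply the classical Euler--Maclaurin continuation
\[
\zeta(w) = \sum_{n=1}^{N-1} n^{-w} + \frac{N^{1-w}}{w - 1} + E_{K, N}(w) + R_{K, N}(w),
\]
where $E_{K, N}(w)$ is a finite sum of elementary terms (polynomials in $w$ times powers $N^{-w-2k+1}$), manifestly $\RR_{\an^*, \exp}$-definable on the strip, and $R_{K, N}(w) = c_K \cdot w(w+1) \cdots (w+2K-1) \int_N^\infty \widetilde{B}_{2K}(x) \, x^{-w-2K}\,dx$ is the remainder, absolutely convergent for $\Re w > 1 - 2K$. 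Using periodicity of $\widetilde{B}_{2K}$, I rewrite
\[
R_{K, N}(w) = c_K \cdot w \cdots (w + 2K - 1) \int_0^1 B_{2K}(u) \, H(w, u)\, du, \qquad H(w, u) := \sum_{n \geq N}(n + u)^{-(w + 2K)};
\]
since $\Re(w + 2K) > 1$ on the strip, expanding $(n + u)^{-(w + 2K)} = n^{-(w + 2K)}(1 + u/n)^{-(w + 2K)}$ binomially and interchanging summations reduces $H(w, u)$ to a convergent series of shifted Riemann zetas $\zeta(w + 2K + k)$ (each definable by the first case) with coefficients polynomial in $u$ and $w$, and integration over the compact interval $[0, 1]$ preserves definability.

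For part (2), I argue by contradiction: suppose $\zeta\rest{\Omega}$ is definable in some o-minimal expansion $\R$ of $\RR_{\an^*, \exp}$. Then $\Omega^{\RR}$ is $\R$-definable; cell decomposition together with the monotonicity theorem provides a definable curve $\gamma : (a, \infty) \into \Omega$ along which $\Im \gamma(t)$ is monotonic and unbounded. Expanding $|\zeta(w)|^2 = \zeta(w) \overline{\zeta(w)}$ as an absolutely convergent double Dirichlet series yields, for $w = \sigma + i\tau$ with $\sigma > 2$,
\[
|\zeta(\sigma + i\tau)|^2 = 1 + 2 \cdot 2^{-\sigma} \cos(\tau \log 2) + r(\sigma, \tau), \qquad |r(\sigma, \tau)| \leq C \cdot 3^{-\sigma}
\]
for some absolute constant $C$. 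I form the $\R$-definable rescaled function $\phi(t) := (|\zeta(\gamma(t))|^2 - 1) \cdot 2^{\Re\gamma(t) - 1}$, which is $\R$-definable (real exp and the modulus are available) and uniformly bounded on $(a, \infty)$. By the monotonicity theorem applied to $\Re \gamma(t)$, this function is eventually monotone with limit $\sigma^* \in [2, \infty]$. If $\sigma^* = \infty$, then the rescaled error $r(\sigma(t), \tau(t))/2^{1-\sigma(t)}$ tends uniformly to zero, so $\phi(t)$ becomes asymptotically $\cos(\Im \gamma(t) \cdot \log 2)$, which has no limit as $\Im \gamma(t) \to \infty$ since the argument passes through every residue modulo $2\pi$. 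If $\sigma^* < \infty$, then $\phi(t)$ is asymptotic to $(|\zeta(\sigma^* + i\tau)|^2 - 1)/2^{1 - \sigma^*}$ evaluated along the monotone unbounded sequence $\tau = \Im\gamma(t)$, a non-constant Bohr almost-periodic function of $\tau$, which again fails to converge. In either case, $\phi$ is a bounded $\R$-definable function on $(a, \infty)$ with no limit at $\infty$, contradicting the o-minimal monotonicity theorem applied in $\R$.

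Main obstacles. The principal difficulty in part (1) is establishing joint definability of the Hurwitz-type tail $H(w, u)$ in the pair $(w, u)$: while the reduction to shifted Riemann zetas is formally routine, carrying it out in a uniformly convergent and $\RR_{\an^*, \exp}$-definable manner requires either invoking Theorem \ref{cgp_dfbl} on an auxiliary generalized power series or verifying delicate uniform estimates, coupled with an appeal to definability of parameter integrals over compact intervals. In part (2), the main difficulty is showing that $\phi(t)$ fails to have a limit for an arbitrary definable curve $\gamma$ rather than merely along vertical lines; the rescaling by $2^{\Re\gamma(t) - 1}$ is essential to expose the oscillation in the regime $\Re \gamma(t) \to \infty$, and in the bounded regime one must invoke non-constancy of $|\zeta(\sigma^* + i \cdot)|^2$ as a Bohr almost-periodic function.
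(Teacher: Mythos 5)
In part (1), your $t \ge 1$ case is essentially the paper's argument for the far right of the strip (Theorem \ref{cgp_dfbl} plus definability of $E$ on horizontal strips and Equation \eqref{zeta_def}), up to a boundary imprecision: since $\|F^{\zeta}\|_{1/e} = \infty$, Theorem \ref{cgp_dfbl} only applies to sectors $S_{\bar\LL}(r,s)$ with $r < 1/e$, and $E(\{\Re w > 1\})$ is not contained in any such sector, so you should cut at, say, $\Re w > 2$. The genuine gap is your $t<1$ case. The Euler--Maclaurin route rests on two closure properties that $\RR_{\an^*,\exp}$ (or any fixed o-minimal structure) is not known to have and which you cannot simply invoke: that an infinite series of definable functions (your series of shifted zetas $\zeta(w+2K+k)$) has a definable sum, and that parameterized integrals of definable functions over $[0,1]$ are definable. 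You flag exactly this under ``Main obstacles'' but do not resolve it, so the proof of (1) is incomplete for $t<1$. The missing observation --- which is the paper's entire proof of (1) --- is that the leftover region $\{t < \Re z \le 2,\ |\Im z| < s\}$ is bounded and $\zeta$ is meromorphic on $\CC$, so its restriction there is definable already in $\RR_{\an}$ by Example \ref{dfbl_exs}(1); no explicit continuation formula is needed.

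Your part (2) is correct in substance but takes a genuinely different route from the paper. The paper reduces (2) to Proposition \ref{cgp_ndfbl}: writing $F^{\zeta} - 1 = G$ with $\|G\|_t < 1$ for $t < e^{-2}$, an $i$-unbounded $\Omega \subseteq \{\Re w > 2\}$ corresponds under $E$ to an argument-unbounded subset of $D_{\bar\LL}(e^{-2})$, and the proposition (itself proved by curve selection and an oscillation argument) applies directly via Equation \eqref{zeta_def}. You instead argue directly on $\zeta$: definable choice gives a curve with unbounded imaginary part, and the rescaled quantity $(|\zeta|^2 - 1)\cdot 2^{\sigma-1}$ either approaches $\cos(\tau\log 2)$ (when $\Re\gamma \to \infty$) or a non-constant almost periodic function of $\tau$ (when $\Re\gamma \to \sigma^* < \infty$), neither of which has a limit, contradicting the Monotonicity Theorem. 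This works and is a self-contained alternative avoiding the generalized-power-series framework, but you should supply what you only assert: non-constancy of $\tau \mapsto |\zeta(\sigma^*+i\tau)|^2$ (e.g.\ its Bohr--Fourier coefficient at frequency $\log 2$ is $2^{-\sigma^*}\zeta(2\sigma^*) \ne 0$, or compare the value $\zeta(\sigma^*)^2$ at $\tau=0$ with the mean value $\zeta(2\sigma^*)$), the fact that a non-constant Bohr almost periodic function has no limit at infinity, and the case where $I_\Omega$ is unbounded below rather than above (handled by conjugation, as in the paper's Proposition \ref{cgp_ndfbl}). The paper's reduction is shorter because the oscillation argument is already packaged once and for all in Proposition \ref{cgp_ndfbl}; your version buys explicit, quantitative oscillation estimates for $\zeta$ itself.
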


\begin{figure}[h]
	\centering
	\fbox{\includegraphics[width=0.45\linewidth]{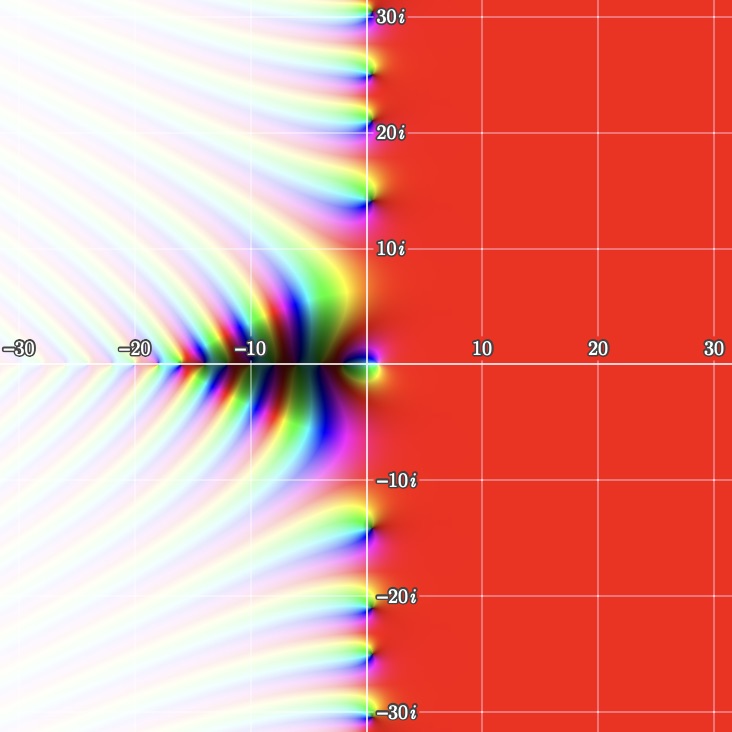}} \hspace{.02\linewidth}
	\fbox{\includegraphics[width=0.45\linewidth]{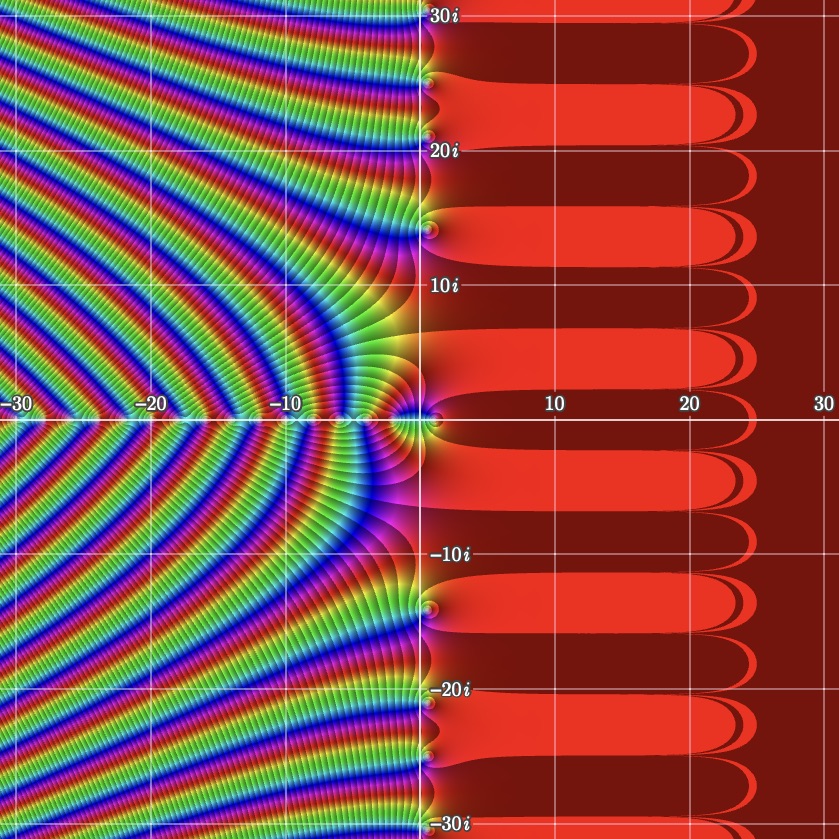}}
	\caption{Two styles of domain colorings for $\zeta(z)$ \cite{DomainColoring}.}
	\label{fig:zeta}
\end{figure}

Figure \ref{fig:zeta} provides two visualizations of the $\zeta$ function created with a freely available tool built by Li \cite{DomainColoring}. 
In both images, the color at a point $z$ represents the argument of $\zeta(z)$. In the first picture, points $z$ are very light in color if $|\zeta(z)|$ is large and close to black if $|\zeta(z)|$ is small. Much of the right half plane is vivid red (close to neither white nor black), which represents that $|\zeta(z)|$ is close to 1 in that region.
In the second picture, $|\zeta(z)|$ is represented with level curves instead of a gradient in shading. The tiny concentric loops on the negative real axis and in the critical strip represent regions where $|\zeta(z)|$ gets very small.

Similarly, we describe complex domains on which the Gamma function $\Gamma$ is definable in Theorem \ref{gamma_dfbl} below, but we defer the statement because the domains are more complicated. We also show in Proposition \ref{gamma_optimal} below that the domains obtained in Theorem \ref{gamma_dfbl} are optimal for any o-minimal expansion of the structure $\mathbb{R}_{\mathcal{G},\exp}$.  

The paper is organized as follows: in Section \ref{setup}, we make precise the notion of \emph{definable holomorphic continuation} and make some basic observations, all in an axiomatic context.  In Section  \ref{cgp_section}, we define the structure $\mathbb{R}_{\mathrm{an}^*}$ and establish Theorem \ref{cgp_dfbl}.  We also show there that this theorem is optimal in a certain sense, and we prove Theorem \ref{zeta_dfbl}. In Section \ref{multi_sec}, we define the o-minimal structure $\mathbb{R}_{\mathcal{G}}$ and establish Theorem \ref{gev_dfbl}. In Section \ref{optimal_sec} we apply Theorem \ref{gev_dfbl} to study the Stirling function, which is definable in $\RR_{\mathcal{G}}$ and used to define the $\Gamma$ function in $\mathbb{R}_{\mathcal{G},\exp}$. We also give some additional results about the optimality of the domains on which the Stirling function is definable. In Section \ref{gamma_sec}, we obtain domains of definability of the complex $\Gamma$ function, and we firm up our conclusions concerning the optimality of these domains.

\section{Definable holomorphic continuations}\label{setup}

We start by defining the notion of ``definable holomorphic continuation'' and making some related observations in the general context of a \textbf{generalized quasianalytic class} (``\textbf{GQC}'' for short) $\mathcal{A} = (\mathcal{A}_{m,n})_{m,n \in \NN}$ as defined by Rolin and Servi \cite[Section 1.2]{MR3349791}.  Their main theorem states that the expansion $\RR_{\mathcal{A}}$ of the real field by all functions in $\mathcal{A}$ defined in a neighbourhood of $[0,1]^m$ (and set equal to 0 outside this box) is o-minimal, polynomially bounded and admits quantifier elimination in its natural language augmented by symbols for division and $n$th roots.  Most currently known polynomially bounded, o-minimal expansions of the real field can be obtained from this theorem, which is why we choose this general context for this section.

In addition to the assumptions listed in \cite[Section 1.2]{MR3349791}, we make here the assumption that $\mathcal{A}$ is \textbf{analytic}, by which we mean that every germ $f \in \A_{m,n}$ has a real analytic representative on
$$I_{m,n,\rho}:= (0,\rho_1) \times \cdots \times (0,\rho_m) \times (-\rho_{m+1},\rho_{m+1}) \times \cdots \times (-\rho_{m+n}, \rho_{m+n}),$$
where $\rho \in (0,\infty)^{m+n}$ is a polyradius.
Examples of analytic GQC are:
\begin{itemize}
    \item the class \textbf{an} of all (germs defined by) convergent power series (giving rise to the o-minimal structure $\RR_{\an}$ mentioned in the introduction);
    \item the class \textbf{an}$^*$ of all (germs defined by) convergent generalized power series (giving rise to $\RR_{\an^*}$, see Section \ref{cgp_section} below);
    \item the class $\mathcal G$ of all (germs defined by) power series that are multisummable in the positive real direction (giving rise to $\RR_{\mathcal{G}}$, see Section \ref{multi_sec} below);
    \item the class $\mathcal G^*$ of all (germs defined by) generalized power series that are multisummable in the positive real direction (giving rise to $\RR_{\mathcal{G}^*}$, see Rolin et al. \cite{Rolin:2022wa});
    \item and the class $\mathcal Q$ of all (germs defined by) generalized power series that are almost regular (giving rise to $\RR_{\mathcal{Q}}$, see Kaiser et al. \cite{MR2572245}). 
\end{itemize}   
Note that the first four examples are reducts of $\RR_{\mathcal{G}^*}$, while the last one is believed to be distinct from the former.

Every real analytic function has a holomorphic continuation on some complex domain.  The germs in $\A$ often have a branch point at the origin, so we will also be considering holomorphic continuations on the Riemann surface of the logarithm
$$\LL := (0,\infty) \times \RR,$$
where, for $z = (r,\theta) \in \LL$, we call $|z|:= r$ the \textbf{modulus} of $z$ and $\arg z:= \theta$ the \textbf{argument} of $z$.  For the purpose of continuation of germs in $\A$ on $\LL$, we identify the subset $(0,\infty) \times \{0\}$ with the real half-line $(0,\infty)$.  The bijection $L:\LL \into \CC$ defined by $L(r,\theta):= \log r + i\theta$ equips $\LL$ with a structure of a holomorphic manifold, and we denote by $E:\CC \into \LL$ its compositional inverse.  Thus, if $\Omega \subseteq \LL$ is a domain and $\varphi:\Omega \into \CC$ is a function, then $\varphi$ is holomorphic if and only if $\varphi \circ E:L(\Omega) \into \CC$ is holomorphic.  Note that $\LL$ is definable in the real field, and $L$ is definable any o-minimal expansion of the real field in which $\log :(0,\infty)\to \RR$ is definable.

Finally, we let $\bar\LL:= \LL \cup \{0\}$ and set $|0|:=0$ and $\arg 0:= 0$, and we extend the topology on $\LL$ to $\bar\LL$ by taking the \textbf{$\log$-disks} $D_{\bar\LL}(R):= \{z \in \bar\LL: |z| < R\}$, for $R>0$, as the basic open neighbourhoods of $0$.

\begin{defn}\label{dfbl_reps}
    Let $\R$ be an o-minimal expansion of the real field and $\varphi:\Omega \longrightarrow \CC$ be a function, where $\Omega \subseteq \bar\LL^m \times \CC^n$.  We set $$\Omega^{\RR} := \{(r,\theta,x,y) \in (0,\infty)^m \times \RR^m \times \RR^{2n}:\ ((r,\theta),x + iy) \in \Omega\},$$ where $(r,\theta) := ((r_1,\theta_1), \dots, (r_m,\theta_m)) \in \LL^m$, and we define $\varphi^{\RR}:\Omega^{\RR} \longrightarrow \CC$ by $$\varphi^{\RR}(r,\theta,x,y) := \varphi((r,\theta),x+iy).$$ We say that $\varphi$ is \textbf{definable in $\R$} if both the real part $\Re \varphi^{\RR}$ and the imaginary part $\Im \varphi^{\RR}$ of $\varphi^{\RR}$ are definable in $\R$.  
\end{defn}

\begin{exs} \label{dfbl_exs}
	\begin{enumerate}
		\item If $\Omega \subseteq \CC^n$ is bounded and definable in $\RR_{\an}$ and $\varphi$ is the restriction to $\Omega$ of a meromorphic function on an open set containing $\Omega$, then $\varphi$ is definable in $\RR_{\an}$.
		\item The function $e^{i\theta}: (-\pi,\pi) \into \CC$ is definable in an o-minimal expansion $\R$ of the real field if and only if both $\sin$ and $\cos$ restricted to $(-\pi,\pi)$ are definable in $\R$.
	\end{enumerate}
\end{exs}

To talk about continuations of real germs at the origin on $\bar\LL$, we need the covering map $\Pi:\LL \into \CC^\times$ given by 
$$\Pi(r,\theta):= re^{i\theta},$$
which is not definable in any o-minimal expansion of the real field.  We extend $\Pi$ to $\bar\LL$ by setting $\Pi(0):= 0$, and we let $\Pi_0$ be the restriction of $\Pi$ to $\{z \in \bar\LL:\ |\arg z| < \pi\}$.  Note that $\Pi_0$ is injective, its image is $\CC \setminus (0,-\infty)$, and it is definable in an o-minimal expansion $\R$ of the real field if and only if $e^{i\theta}: (-\pi,\pi) \into \CC$ is definable in $\R$.  Below, we also write $\Pi$ and $\Pi_0$ for the componentwise application of $\Pi$ and $\Pi_0$ from $\bar\LL^n$ to $\CC^n$, respectively.

\begin{rmk}\label{dfbl_rmk}
	Let $\R$ be an o-minimal expansion of the real field, and assume that $e^{i\theta}: (-\pi,\pi) \into \CC$ is definable in $\R$.  Let $\varphi:\Omega \longrightarrow \CC$ be a function, where $\Omega \subseteq (\CC \setminus (0,-\infty))^n$.  Then $\varphi$ is definable in $\R$ if and only if $\varphi \circ \Pi_0:\Pi_0^{-1}(\Omega) \longrightarrow \CC$ is definable in $\R$.
\end{rmk}

Returning to the GQC $\mathcal A$: given a germ $f \in \mathcal A_{m,n}$ and a function $\varphi:\Omega \into \CC$ with $\Omega \subseteq \bar\LL^m \times \CC^n$, we call $\varphi$ a \textbf{representative} of $f$, if $\varphi$ is holomorphic on the interior of $\Omega$ and $$f_{\varphi}:= \varphi(\Pi_0^{-1}(x),y):I_{m,n,\rho} \into \RR$$ is a real representative of $f$, for some polyradius $\rho \in (0,\infty)^{m+n}$. In this situation, we also refer to $\varphi$ as a \textbf{continuation} of $f_{\varphi}$.

Definability of continuations of all functions definable in $\RR_{\an}$ (not just of the primitives) was established in \cite{MR3509471}, while holomorphic continuations of all definable univariate germs in $\RR_{\an,\exp}$ were studied in \cite{MR3934482}.  This paper can be considered as a first step in this direction for $\A = $ \textbf{an}$^*$ and $\A = \G$.  

\section{Convergent generalized power series} \label{cgp_section}

In this section, we consider the case $\A = $ \textbf{an}$^*$.  For each $m,n \in \NN$, the ring \textbf{an}$^*_{m,n}$ is the set of all germs at the origin of real functions obtained as follows:  let $X = (X_1, \dots, X_m)$ and $Y = (Y_1, \dots, Y_n)$ be tuples of indeterminates, $\alpha = (\alpha_1, \dots, \alpha_m)$ range over $[0,\infty)^m$ and $\beta = (\beta_1, \dots, \beta_n)$ range over $\NN^n$.  Let $F(X,Y) = \sum_{\alpha,\beta} a_{\alpha,\beta} X^\alpha Y^\beta$ be a \textbf{mixed generalized power series} as defined in \cite{Dries:1998xr}; that is, there are well-ordered sets $A_1, \dots, A_m \subseteq [0,\infty)$ such that the \textbf{support} of $F$,
$$\supp(F):= \{(\alpha,\beta):\ a_{\alpha,\beta} \ne 0\}$$
is a subset of $A_1 \times \cdots \times A_m \times \NN^n$.  We refer to $X$ and $Y$ as the \textbf{generalized} and \textbf{standard} indeterminates of $F$, respectively.
We say that $F$  has \textbf{polyradius of convergence at least} $(r,s)$, where $r \in (0,\infty)^m$ and $s \in (0,\infty)^n$ are polyradii, if $$\|F\|_{r,s} := \sum |a_{\alpha,\beta}| r^\alpha s^\beta < \infty.$$ 
In this situation, setting 
$$D_{\LL}(r):= \{z \in \LL^m:\ |z_i| < r_i \text{ for each } i\},\quad D_{\bar\LL}(r):= \{z \in \bar\LL^m:\ |z_i| < r_i \text{ for each } i\}$$
and
$$D(s):= \{w \in \CC^n:\ |w_i| < s_i \text{ for each } i\},$$
the series $F$ defines a continuous function $F_{r,s}:D_{\bar\LL}(r) \times D(s) \into \CC$, given by $$F_{r,s}(x,y):= \sum_{\alpha,\beta} a_{\alpha,\beta} |x|^\alpha e^{i\alpha\arg x} y^\beta,$$ whose restriction to $D_{\LL}(r) \times D(s)$ is holomorphic.  For $\rho > 0$, we define the \textbf{polysector}
$$S_{\bar\LL}(r,\rho):= \{z \in \bar\LL^m:\ |z_i| < r_i \text{ and } |\arg z_i| < \rho \text{ for each } i\},$$
and we obtain:

\begin{theorem}  \label{cgp_dfbl}
	Let $F(X,Y) = \sum_{\alpha,\beta} a_{\alpha,\beta} X^\alpha Y^\beta$ be a mixed generalized power series with polyradius of convergence at least $(r,s)$, and let $\rho >0$.  Then the restriction of $F_{r,s}$ to $S_{\bar\LL}(r,\rho) \times D(s)$ is definable in $\RR_{\an^*}$.
\end{theorem}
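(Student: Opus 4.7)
The plan is to express the real and imaginary parts of $F_{r,s}$ as convergent mixed generalized power series in the real parameters of the polysector, and thereby obtain germs in $\an^*_{m,m+2n}$ that are definable via the Rolin--Servi construction. I would first parameterize $(S_{\bar\LL}(r,\rho) \times D(s))^{\RR}$ by coordinates $(x,\theta,u,v)$ with $x_j \in [0,r_j)$, $\theta_j \in (-\rho,\rho)$ and $y_j = u_j + iv_j \in D(s_j)$, and reduce to showing that $\Re F_{r,s}^{\RR}$ and $\Im F_{r,s}^{\RR}$ are each definable in $\RR_{\an^*}$.

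The core computation is a double Taylor expansion: writing $e^{i\alpha\theta} = \sum_{k \in \NN^m} (i^{|k|} \alpha^k/k!)\,\theta^k$ component-wise and $(u+iv)^\beta = \sum_{\gamma \le \beta} \binom{\beta}{\gamma} i^{|\gamma|} u^{\beta-\gamma} v^\gamma$ by the binomial theorem, then collecting terms by the parity of $|k|+|\gamma|$, one obtains formal mixed generalized power series in the variables $(x;\theta,u,v)$---with $x$ generalized and $\theta,u,v$ standard---representing $\Re F_{r,s}$ and $\Im F_{r,s}$. Absolute convergence of these series is established by the estimate
\[
\sum_{\alpha,\beta,k,\gamma} |a_{\alpha,\beta}| \binom{\beta}{\gamma} \frac{\alpha^k}{k!}\,x^\alpha |\theta|^k u^{\beta-\gamma} v^\gamma \;\le\; \sum_{\alpha,\beta} |a_{\alpha,\beta}| \prod_{j=1}^m (x_j e^{|\theta_j|})^{\alpha_j} \prod_{j=1}^n (u_j+v_j)^{\beta_j},
\]
which is finite whenever $x_j e^{|\theta_j|} \le r_j$ and $u_j + v_j \le s_j$ for every $j$. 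Hence the formal series define germs in $\an^*_{m,m+2n}$ at the origin, and by the Rolin--Servi theorem they yield functions definable in $\RR_{\an^*}$ on the open region
\[
\Omega_0 := \{(x,\theta,u,v) \in (S_{\bar\LL}(r,\rho) \times D(s))^{\RR} : x_j e^{|\theta_j|} < r_j \text{ and } u_j+v_j < s_j \text{ for all }j\}.
\]

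The main obstacle is extending this definability from $\Omega_0$ to the entire real polysector: at points of the complement, at least one $x_j \ge r_j e^{-\rho} > 0$ must hold, so the expansion at the origin fails to converge, although $F_{r,s}$ itself remains well-defined (and real-analytic whenever every $x_j > 0$). To cover these remaining points, for each nonempty subset $J \subseteq \{1,\dots,m\}$ I would perform a \emph{mixed} expansion---the generalized Taylor expansion at $0$ in the directions $j \notin J$ combined with an ordinary analytic Taylor expansion around suitable base points in the directions $j \in J$---and patch finitely many such mixed expansions using o-minimal cell decomposition. The delicate step is arranging for uniform convergence of these mixed expansions up to the outer boundary $|x_j| = r_j$, where the analytic radius may shrink; this is handled by using the continuity of $F_{r,s}$ on the closure (guaranteed by absolute convergence of the original series at $(r,s)$) together with compactness of the closure of the complement.
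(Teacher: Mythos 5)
Your first step is essentially the paper's Lemma \ref{cgp_pol_dfbl}: expanding $e^{i\alpha\theta}$ into its Taylor series, splitting $(u+iv)^\beta$, and estimating the result by $\sum|a_{\alpha,\beta}|\prod_j(x_je^{|\theta_j|})^{\alpha_j}\prod_j(|u_j|+|v_j|)^{\beta_j}$ is exactly how the paper produces the mixed generalized series $G,H$ and invokes \cite[Lemma 7.4]{Dries:1998xr} (minor points: you need $|u_j|+|v_j|$, not $u_j+v_j$, and the cited definability is for restrictions to compact boxes, so you should fix a box $x_j\le r_je^{-\rho}$, $|\theta_j|\le\rho$, $|u_j|+|v_j|\le s_j$ rather than claim definability on the curved open region $\Omega_0$ in one stroke). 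The divergence is in the second step, and there the proposal has genuine gaps. First, your description of the complement is wrong: a point of the polysector can fail to lie in $\Omega_0$ because some $|u_j|+|v_j|\ge s_j$ (this can happen with $|y_j|<s_j$, since $|u_j|+|v_j|$ ranges up to $\sqrt{2}\,s_j$ on $D(s_j)$) while every $x_j$ is arbitrarily small; so ``at least one $x_j\ge r_je^{-\rho}$'' is not forced, and this $y$-outer region is never addressed by your patching scheme.

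More seriously, the key step you defer --- convergence of finitely many mixed Taylor expansions ``up to the outer boundary $|x_j|=r_j$, handled by continuity of $F_{r,s}$ on the closure together with compactness'' --- cannot work as described. If the series converges only up to the polyradius $(r,s)$, the radii of ordinary Taylor expansions at interior base points shrink to $0$ as the base point approaches $|x_j|=r_j$, so no finite family of such charts covers a neighbourhood of the boundary inside the polysector, and continuity of the sum contributes nothing toward definability: for instance $F(X)=\sum_{k\ge1}X^k/k^2$ has $\|F\|_1<\infty$ and a continuous sum on $[0,1]$, yet near $x=1^-$ the sum behaves like $(1-x)\log\frac1{1-x}$, which is not definable in any polynomially bounded o-minimal structure. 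So any argument resting only on continuity up to the boundary must fail; what is needed is a holomorphic continuation of $F_{r,s}$ past the outer region. That is precisely the paper's route: it takes $t>r$ with $\|F\|_{t,s}<\infty$, straightens the sectorial variables by the restricted logarithm $L$ (definable in $\RR_{\an}$ on the relevant range of moduli and arguments), and then applies Example \ref{dfbl_exs}(1), namely that a function holomorphic on a neighbourhood of a compact set is definable in $\RR_{\an}$; this single compact chart replaces your patchwork and dispenses with cell decomposition altogether. Your proposal is missing exactly this continuation-plus-restricted-log idea, and without some hypothesis giving room beyond $(r,s)$ the boundary step cannot be repaired by continuity and compactness alone.
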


Before proving Theorem \ref{cgp_dfbl}, we show how to apply it to the Riemann $\zeta$ function.

\begin{proof}[Proof of Theorem \ref{zeta_dfbl}(1)]
    Consider the generalized power series
$$F^{\zeta}(X) := \sum_{n \ge 1} X^{\log n},$$
which has radius of convergence $1/e$, that is, radius of convergence at least $r$ for every $r \in (0,1/e)$.  Hence $F^{\zeta}$ has a representative $f^{\zeta}:D_{\bar\LL}(1/e) \into \CC$ and, by definition, we have 
\begin{equation} \label{zeta_def}
	\zeta(w) = f^{\zeta}(E(-w)), \quad\text{for } w \in \CC \text{ with } \Re w > 1.
\end{equation} 
We conclude using Theorem \ref{cgp_dfbl} and Example \ref{dfbl_exs}(1).
\end{proof}

The proof of Theorem \ref{cgp_dfbl} is based on the following lemma:

\begin{lemma} \label{cgp_pol_dfbl}
	Let $F(X,Y) = \sum_{\alpha,\beta} a_{\alpha,\beta} X^\alpha Y^\beta$ be a mixed generalized power series with polyradius of convergence at least $(r,s)$.
	Then there are mixed generalized power series $G$ and $H$, with generalized indeterminates $X$ and standard indeterminates $(U,Y,V)$, where $U = (U_1, \dots, U_m)$ and $V = (V_1, \dots, V_n)$, such that,
	for any polyradius $$\tau = \left(r',\rho,\frac s2, \frac s2\right) \in (0,\infty)^{2m+2n} \text{ satisfying } r'e^\rho < r,$$ we have
	$$\Re F_{r,s}\left((x,u),y+iv\right) = G_{\tau}(x,u,y,v) \quad \text{and} \quad \Im F_{r,s}\left((x,u),y+iv\right) = H_{\tau}(x,u,y,v)$$
	for all $(x,(u,y,v)) \in I_{m,m+2n,\tau}$.  In particular, the restriction of $F_{r,s}$ to the set $S_{\bar\LL}(r',\rho) \times D(s/2)$ is definable in $\RR_{\an^*}$.
\end{lemma}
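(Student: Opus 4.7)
The plan is to construct $G$ and $H$ explicitly by expanding the complex factors inside $F_{r,s}((x,u),y+iv)$: the exponential $e^{i\alpha u} = \prod_{j=1}^m(\cos(\alpha_j u_j)+i\sin(\alpha_j u_j))$ as a Taylor series in $u$, and $(y+iv)^\beta = \prod_{j=1}^n(y_j+iv_j)^{\beta_j}$ by the binomial theorem. Applying $\Re(AB) = \Re A\,\Re B - \Im A\,\Im B$ and the analogous identity for $\Im$, and then collecting like monomials, yields formal series $G(X,U,Y,V)$ and $H(X,U,Y,V)$ with real coefficients. The $X$-exponents occurring lie in the projection of $\supp(F)$ to its first $m$ coordinates, hence form a well-ordered subset of $A_1\times\cdots\times A_m$, while the $(U,Y,V)$-exponents lie in $\NN^{m+2n}$; so $G$ and $H$ are formally mixed generalized power series of the required shape.

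The main step is the convergence estimate. For each fixed $\alpha$, the sum of the absolute values of the coefficients of $\Re(e^{i\alpha u})$, weighted by $\rho^k$, is bounded by
$$\sum_{|J|\text{ even}}\prod_{j\in J}\sinh(\alpha_j\rho_j)\prod_{j\notin J}\cosh(\alpha_j\rho_j) \;=\; \cosh\!\Bigl(\textstyle\sum_j\alpha_j\rho_j\Bigr) \;\le\; e^{\alpha\cdot\rho},$$
which follows by adding and subtracting $\prod_j(\cosh\pm\sinh) = e^{\pm\sum_j\alpha_j\rho_j}$; the analogous bound for $\Im(e^{i\alpha u})$ is $\sinh(\sum_j\alpha_j\rho_j) \le e^{\alpha\cdot\rho}$. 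For each fixed $\beta$, the binomial theorem bounds the absolute-coefficient sum of $\Re((y+iv)^\beta)$ or $\Im((y+iv)^\beta)$, evaluated at $|y_j|=|v_j|=s_j/2$, by $\prod_j 2^{\beta_j}(s_j/2)^{\beta_j} = s^\beta$. Combining these estimates with the identity $\Re(AB) = \Re A\,\Re B - \Im A\,\Im B$ gives
$$\|G\|_\tau \;\le\; 2\sum_{\alpha,\beta}|a_{\alpha,\beta}|\,(r'e^\rho)^\alpha s^\beta \;\le\; 2\|F\|_{r,s} < \infty,$$
since $r'e^\rho<r$ componentwise, and the same bound holds for $\|H\|_\tau$. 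Thus $G$ and $H$ have polyradius of convergence at least $\tau$, and absolute convergence justifies the rearrangement needed to conclude $G_\tau = \Re F_{r,s}((x,u),y+iv)$ and $H_\tau = \Im F_{r,s}((x,u),y+iv)$ on $I_{m,m+2n,\tau}$.

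For the final assertion, $G_\tau$ and $H_\tau$ are definable in $\RR_{\an^*}$ by the Rolin–Servi theorem applied to $\an^*$. The $\RR$-representation of $S_{\bar\LL}(r',\rho)\times D(s/2)$ is contained in $I_{m,m+2n,\tau}$, since the box $\{|y_j|<s_j/2,\ |v_j|<s_j/2\}$ contains the disk $\{y_j^2+v_j^2<(s_j/2)^2\}$; so the restrictions of $\Re F_{r,s}$ and $\Im F_{r,s}$ to this set are restrictions of $G_\tau$ and $H_\tau$, hence definable in the sense of Definition \ref{dfbl_reps}. I expect the main technical obstacle to be precisely the convergence estimate: one needs the $e^{\alpha\cdot\rho}$ growth introduced by the trigonometric expansion to be absorbed by the gap $r'e^\rho<r$ in the hypothesis, and the computation above confirms this is exactly what happens, with the rest reducing to formal bookkeeping and an appeal to Rolin–Servi.
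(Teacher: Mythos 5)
Your proposal is correct and follows essentially the same route as the paper: expand $e^{iu\alpha}$ via the Taylor series of $\cos$ and $\sin$, collect real and imaginary parts into mixed generalized series $G,H$, and absorb the resulting exponential growth through the hypothesis $r'e^\rho<r$ (the paper phrases your $\cosh/\sinh$ bound as $\|C\|_t+\|S\|_t=e^t$ and $\|(Y+iV)^n\|_{t,t}=(2t)^n$, citing \cite[Lemma 7.4]{Dries:1998xr} for the final definability step). The only differences are cosmetic, such as your harmless extra factor of $2$ in the norm estimate.
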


\begin{proof} 
	Let $r',\rho \in (0,\infty)^m$ be polyradii satisfying $r'e^\rho < r$, and let $(x,(u,y,v)) \in I_{m,m+2n,\tau}$.  Then, by definition of $F_{r,s}$,
	\begin{align*}
	    F_{r,s}&((x,u),y+iv) = \sum_{\alpha,\beta} a_{\alpha,\beta} x^\alpha e^{iu\alpha}(y+iv)^\beta \\
    &= \sum_{\alpha,\beta} a_{\alpha,\beta} x^\alpha (\cos(u_1\alpha_1) + i\sin(u_1\alpha_1)) \cdots (\cos(u_m\alpha_m) + i\sin(u_m\alpha_m))(y+iv)^\beta.
	\end{align*}
	Let $C(T)$ be the Taylor series of $\cos t$, and let $S(T)$ be the Taylor series of $\sin t$ (both at $t=0$).  Replacing $\cos$ and $\sin$ in the above by their Taylor series and collecting real and imaginary parts, we obtain two mixed generalized series $G(X,U,Y,V)$ and $H(X,U,Y,V)$ as stated in the lemma.  Since $\|C\|_t + \|S\|_t = e^t$ and $\|(Y+iV)^n\|_{t,t} = (2t)^n$, for $t>0$ and $n \in \NN$, we get  
	\begin{align*}
	    \|G\|_{\tau} &\le  \sum_{\alpha,\beta} |a_{\alpha,\beta}| (r')^\alpha (\|C\|_{\rho_1\alpha_1} + \|S\|_{\rho_1\alpha_1}) \cdots (\|C\|_{\rho_m\alpha_m} + \|S\|_{\rho_m\alpha_m})s^\beta \\
	    &= \sum_{\alpha,\beta} |a_{\alpha,\beta}| (r')^\alpha e^{\rho\alpha} s^\beta \\
	    &= \sum_{\alpha\beta} |a_{\alpha,\beta}| (r'e^\rho)^\alpha s^\beta.
	\end{align*}
	Since the latter sum converges whenever $r'e^\rho < r$, it follows that $\Re F_{r,s}((x,u),y+iv) = G_{\tau}(x,u,y,v)$.  The same argument shows that $\Im F_{r,s}((x,u),y+iv) = H_{\tau}(x,u,y,v)$.  The definability of $G_\tau$ and $H_\tau$ in $\RR_{\an^*}$ follows from \cite[Lemma 7.4]{Dries:1998xr}.
\end{proof}

\begin{proof}[Proof of Theorem \ref{cgp_dfbl}]
	Let $r' \in \left(0,\frac{r}{e^\rho}\right)$, and set  $$\Omega:= (S_{\bar\LL}(r,\rho) \times D(s))\setminus (S_{\bar\LL}(r',\rho) \times D(s/2)).$$  By Lemma \ref{cgp_pol_dfbl}, the restriction of $F_{r,s}$ to $S_{\bar\LL}(r',\rho) \times D(s/2)$ is definable in $\RR_{\an^*}$, so it suffices to show that the restriction to $\Omega$ is definable.  
	
	Now note that the restriction of $\log$ to any closed and bounded interval contained in $(0,\infty)$ is definable in $\RR_{\an}$.  Therefore, $L$ restricted to $S_{\LL}(t,\sigma) \setminus S_{\LL}(t',\sigma)$ is definable for any $t > r$, any $t' \in (0,r')$ and any $\sigma > \rho$.  Since $\|F\|_{r,s} < \infty$, there exists $t>r$ such that $\|F\|_{t,s} < \infty$ as well; in particular, the function $F_{t,s}$ is a holomorphic continuation of the restriction of $F_{r,s}$ to $\bar\Omega$ on an open neighbourhood of $\bar\Omega$.  Therefore, the function $\varphi:\bar\Delta \into \CC$, defined by $$\bar\Delta:= \left\{(L(x),y):\ (x,y) \in \bar\Omega\right\} \subseteq \CC^{m+n} \quad\text{and}\quad \varphi(z,y):= F_{r,s}((E(z),y))$$ is holomorphic on the compact definable set $\bar\Delta$, hence definable in $\RR_{\an}$ by Example \ref{dfbl_exs}(1).
\end{proof}

The next proposition shows that Theorem \ref{cgp_dfbl} is optimal in the following sense: we call $\Omega \subseteq \bar\LL$ \textbf{argument-bounded} if the set $$A_\Omega := \{\arg z:\ z \in \Omega\} \subseteq \RR$$ is bounded.  Let $F(X) = \sum_{\alpha} a_{\alpha}X^{\alpha}$ be a nonconstant generalized power series in one indeterminate $X$, and assume that $F$ has radius of convergence at least $r>0$.  Then $$F(X) - F(0) = a_{\alpha_0}X^{\alpha_0} (1 + G(X)),$$  where $\alpha_0 := \min\supp(F(X)-F(0)) > 0$ and $$G(X) := \sum_{\alpha>\alpha_0} \frac{a_{\alpha}}{a_{\alpha_0}}X^{\alpha-\alpha_0}.$$  Since $G$ also has radius of convergence at least $r$, we have $\displaystyle \lim_{\rho \to 0} \|G\|_{\rho} = 0$, so we set $$\rho:= \sup\{t \in (0,r):\ \|G\|_t < 1.$$

\begin{prop} \label{cgp_ndfbl}
	The restriction of $F_r$ to $\Omega$ is not definable in any o-minimal expansion of $\RR_{\an^*}$, for any argument-unbounded $\Omega \subseteq D_{\bar\LL}(\rho)$.
\end{prop}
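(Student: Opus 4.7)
My plan is to suppose toward contradiction that the restriction of $F_r$ to $\Omega$ is definable in some o-minimal expansion $\R$ of $\RR_{\an^*}$ and to extract from this data a definable real-valued function on an unbounded interval whose sign must oscillate infinitely often, violating the o-minimal monotonicity theorem.

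First, since $\Omega^\RR \subseteq (0,\rho)\times\RR$ is definable in $\R$ (Definition \ref{dfbl_reps}) with unbounded projection on the argument coordinate, cell decomposition together with the monotonicity theorem yield a definable continuous function $t : J \to (0,\rho)$ on an unbounded interval $J \subseteq \RR$ --- say $J = (a,\infty)$ --- such that $(t(\theta),\theta) \in \Omega$ for every $\theta \in J$. (A cell with unbounded argument-projection is either a graph $\theta = g(t)$, with $g$ blowing up at an endpoint of its $t$-domain and locally invertible there by monotonicity, or a two-dimensional cell admitting an inscribed continuous section.) Set $\psi(\theta) := F_r(t(\theta),\theta) - F(0)$, which is definable in $\R$. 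The hypothesis on $F$ yields
$$\psi(\theta) \;=\; a_{\alpha_0}\, t(\theta)^{\alpha_0}\, e^{i\alpha_0 \theta}\, h(\theta), \qquad h(\theta) := 1 + G_r(t(\theta),\theta),$$
and since $t(\theta) < \rho$ one has $|h(\theta)-1| \leq \|G\|_{t(\theta)} < 1$; thus $\psi(\theta) \neq 0$, $\Re h(\theta) > 0$, and the principal argument $\arg h(\theta) \in (-\pi/2,\pi/2)$.

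Next I isolate the oscillation. Because $|\psi|>0$ on $J$, the unit-modulus function $\hat\psi := \psi/|\psi|$ is definable in $\R$, and a direct computation gives $\hat\psi = e^{i\Phi}$ with continuous phase $\Phi(\theta) = \arg(a_{\alpha_0}) + \alpha_0 \theta + \arg h(\theta)$. Boundedness of $\arg h$ by $\pi/2$ in absolute value together with $\alpha_0 > 0$ forces $\Phi(\theta) \to +\infty$ as $\theta \to \infty$ in $J$; by the intermediate value theorem I then select $\theta_n \to \infty$ in $J$ with $\Phi(\theta_n) = n\pi$, so that $\Re \hat\psi(\theta_n) = (-1)^n$. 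But $\Re \hat\psi : J \to [-1,1]$ is definable in $\R$, hence monotonic on a tail of $J$ by the monotonicity theorem, and therefore converges as $\theta \to \infty$ --- contradicting $(-1)^n$.

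The main obstacle will be the first step: converting the purely combinatorial hypothesis that $\Omega$ is argument-unbounded into a definable continuous curve $(t(\theta),\theta) \subseteq \Omega$ with $\theta$ ranging over an unbounded interval. The bound $t(\theta) < \rho$ is automatic from $\Omega \subseteq D_{\bar\LL}(\rho)$ and delivers the crucial inequality $\|G\|_{t(\theta)} < 1$ on which the oscillation argument hinges, but the existence of the curve itself requires a short case analysis on the type of cell in $\Omega^\RR$ whose argument-projection is unbounded, plus an application of the monotonicity theorem to invert $g$ in the graph case.
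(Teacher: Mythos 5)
Your proposal is correct and follows essentially the same route as the paper's proof: extract a definable continuous curve in $\Omega$ whose argument runs through a ray (the paper does this via definable choice/curve selection plus the Monotonicity Theorem, which is what your cell-decomposition discussion amounts to), then use the factorization $F_r(x)-F_r(0)=a_{\alpha_0}|x|^{\alpha_0}e^{i\alpha_0\arg x}\bigl(1+G_r(x)\bigr)$ with $|G_r|<1$ on $D_{\bar\LL}(\rho)$ to force infinitely many oscillations of a definable function, contradicting o-minimality. The only (harmless) difference is cosmetic: you normalize by $|\psi|$ rather than by $a_{\alpha_0}|\gamma(t)|^{\alpha_0}$, so you do not need definability of $t\mapsto t^{\alpha_0}$ in $\RR_{\an^*}$ and your argument in fact rules out definability in any o-minimal expansion of the real field, while the paper instead concludes via the set $\{t:\Im\delta(t)=0\}$ having infinitely many connected components.
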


\begin{proof}
	For $x = (|x|,\arg x) \in D_{\bar\LL}(\rho)$, we have
	\begin{equation} \label{curve_eq}
	F_{r}(x) - F_{r}(0)
	= a_{\alpha_0}|x|^{\alpha_0} e^{i\alpha_0 \arg x} \left(1+G_{r}(x)\right),
	\end{equation}
	with $|G_{r}(x)| < 1$.
	Assume for a contradiction that there is an argument-unbounded set $\Omega \subseteq D_{\bar\LL}(\rho)$ such that the restriction $F_{\Omega}$ of $F_{r}$ to $\Omega$ is definable in some o-minimal expansion $\R$ of $\RR_{\an^*}$.  Then, by o-minimality, $A_{\Omega}$ contains an interval $(a,\infty)$ or $(-\infty,a)$, for some $a \in \RR$; we assume here the former, the latter being handled similarly.

	By definable curve selection, there is a definable (in $\R$) curve $\gamma:(0,\infty) \into \Omega$ such that $\arg\gamma(t) = t$ for all $t > a$; in particular, we have $|G_r(\gamma(t))| < 1$ for all $t>a$.  By the Monotonicity Theorem, after increasing $a$ if necessary, we may assume that $\gamma$ is continuous.  

	Since the power function $t \mapsto t^{\alpha_0}:(0,\infty) \into \RR$ is definable in $\RR_{\an^*}$, it follows from the definability of $F_\Omega$ in $\R$ that the curve $\delta:(a,\infty) \into \CC$ defined by $$\delta(t):= \frac{F(\gamma(t))-F(0)}{a_{\alpha_0}|\gamma(t)|^{\alpha_0}}$$
	is definable in $\R$.
	However, since $e^{i\alpha_0 \theta}$ is periodic and of modulus 1, and since $|G_r(\gamma(t))| < 1$ for all sufficiently large $t$, the continuous curve $\epsilon(t):= e^{i\alpha_0 \arg\gamma(t)} \big(1+G_r(\gamma(t))\big):(a,\infty) \into \CC$ intersects the real axis in infinitely many connected components.  By Equation \eqref{curve_eq}, we have $\epsilon = \delta$, which contradicts the definability of $\delta$.
\end{proof}

\begin{proof}[Proof of Theorem \ref{zeta_dfbl}(2)]
	We have $$F^\zeta(X) - 1 = G(X):= \sum_{n>1} X^{\log n}.$$  We get from Calculus that $\left\|G\right\|_{t} < 1$ for all $t < e^{-2}$.  So part (2) of Theorem \ref{zeta_dfbl} follows from Proposition \ref{cgp_ndfbl} and Equation \eqref{zeta_def}.
\end{proof}

\section{Multisummable series} \label{multi_sec}

We now move on to the case $\A = \G$; we first recall some notation and the definitions of generalized sectors and multisummable functions from \cite{RealO-minGamma}.  The only difference is that here the generalized variables range over the Riemann surface of $\log$.

For $(k_1,\dots,k_m) \in [0,\infty)^m$ and $z = ((|z_1|,\arg z_1),\dots,(|z_m|,\arg z_m)) \in \bar\LL^m$, we put 
    \begin{align*}
        k \cdot |\arg z| &:= k_1|\arg z_1| + \cdots + k_m|\arg z_m| \\
        z^k &:= \left(|z_1|^{k_1} \cdots |z_m|^{k_m}, k_1\arg z_1 + \cdots + k_m\arg z_m\right)\\
        |z| &:= \sup\{|z_i| : i = 1,\dots,m\} 
    \end{align*}
    For a polyradius $R=(R_1,\dots,R_m) \in (0,\infty)^m$, we put
    \begin{align*}
        [0,R) := [0,R_1) \times \cdots \times [0,R_m) \subset \RR^m.
    \end{align*}
    For $R,\widetilde{R} \in (0,\infty)^m$ we write $R \le \widetilde{R}$ if $R_i \le \widetilde{R}_i$ for each $i$, and $R < \widetilde{R}$ if $R_i < \widetilde{R}_i$ for each $i$.
	If $z \in \CC^m$ and $f:\CC \into \CC$ is a function, we will write $$f(z):= (f(z_1),\dots,f(z_n));$$ similarly for $f:\bar\LL \into \CC$.
    If $a,b \in \bar\LL^m$, we denote by $ab$ the coordinatewise product $(a_1b_1,\dots,a_mb_m)$.

    Let $R \in(0,\infty)^m$ be a polyradius, $\phi \in (0,\pi)$, and $k \in [0,\infty)^m$.
    The \textbf{generalized sector} is the set
    \begin{align*}
        S_{\bar\LL}(k,R,\phi) &:= \{z \in D_{\bar\LL}(R) : k \cdot|\arg z| < \phi\}.
    \end{align*}
    Correspondingly, for $p \in \NN$, we set 
    \begin{align*}
        D_{\bar\LL}(k,R,p) &:= \left\{z \in D_{\bar\LL}(R) : |z^k| < \frac{R^k}{p+1}\right\} \\
        S_{\bar\LL}(k,R,\phi,p) &:= S_{\bar\LL}(k,R,\phi) \cup D_{\bar\LL}(k,R,p).
    \end{align*}
    For a nonempty finite subset $K \subset [0,\infty)^m$, we set
    \begin{align*}
        S_{\bar\LL}(K,R,\phi) &:= \bigcap_{k \in K} S_{\bar\LL}(k,R,\phi) \\
        S_{\bar\LL}(K,R,\phi,p) &:= \bigcap_{k \in K} S_{\bar\LL}(k,R,\phi,p).
    \end{align*}

For the next definition, we also fix $r>1$.  To lighten notation, we set $\tau:=(K,R,r,\phi)$, and we write $S(\tau) := S_{\bar\LL}(K,R,\phi)$ and $S_p(\tau) := S_{\bar\LL}(K,R,\phi,p)$; if $\tau$ is clear from context, we shall also simply write $S$ and $S_p$, respectively.

\begin{defn}
    For each $p \in \NN$ let $f_p : S_p \to \CC$ be a function which is holomorphic on $S_p\setminus \{0\}$ and satisfies the following conditions:
    \begin{enumerate}
        \item The function $f_p$ is bounded and 
        \[
            \sum_{p \in \NN} \|f_p\|_{S_p}r^p < \infty
        \]
        where $\|f\|_U := \sup_{z \in U} |f(z)| \in [0,\infty]$ for a function $f : U \to \CC$.
        
        \item There is a power series $F_p(X) = \sum_{\alpha}a_{\alpha} X^{\alpha}$ with $\supp(F)\subset \NN^m$ and polyradius of convergence at least $\frac{R}{\sqrt[k]{p+1}}$, such that $f_p(z)=(F_p)_{\frac{R}{\sqrt[k]{p+1}}}(z)$ for all $z \in D_{\bar{\LL}}\left(\frac{R}{\sqrt[k]{p+1}}\right)$.
    \end{enumerate}
    By condition (1), $\sum_{p \in \NN}f_p$ converges uniformly on $S \setminus\{0\}$ to a holomorphic function $f : S\setminus\{0\} \to \CC$. 
    Both conditions together imply that $\sum_{n\in\NN} F_p$ converges (in the sense of formal power series) to a series $F$, generally with polyradius of convergence zero.
    By condition (2) each $f_p$ is holomorphic at $0 \in \bar{\LL}$, so again both conditions imply $f$ extends to $0$ and is $C^{\infty}$ there with Taylor series $F$.
    We denote this state of affairs by \[f =_{\tau}\sum_{p \in \NN}f_p.\]
    
    Let $\mathcal{G}_{\tau}$ be the set of all functions $f : S \to \CC$ such that $f =_{\tau} \sum_{p \in \NN} f_p$ for some such sequence $(f_p)_{p \in \NN}$.
    For $f \in \mathcal{G}_{\tau}$, we put $$\|f\|_{\tau} := \inf\left\{\sum_{p \in \NN}\|f_p\|_{S_p}r^p:\ f =_{\tau} \sum_{p \in \NN} f_p\right\}.$$   
\end{defn}

\begin{ex} \label{sterling_ex}
	Recall that
	\[
	\Gamma(z) = \sqrt{2\pi}z^{z-\frac{1}{2}}e^{-z}e^{\varphi(z)}
	\]
	for $z \in \CC\setminus (-\infty,0]$, where $\varphi(z)$ is the Stirling function.
	Let $\psi(z) = \varphi \left(\frac{1}{z}\right)$; by Sauzin \cite[Theorem 5.41]{Sauzin}, $\psi$ is $C^\infty$ at 0.  Its Taylor series at 0, denoted here by $\hat\psi$, is 1-summable in every direction $d \in (-\frac\pi2, \frac\pi2)$, and $\psi$ is the Borel sum of $\hat\psi$.  
	Given $R>0$ and $\alpha \in (\frac\pi2,\pi)$ this implies, by Tougeron \cite[Prop. 2.9]{Tougeron:1994fk}, that there exists $r>1$ such that the restriction of $\psi \circ \Pi$ to $S_{\bar\LL}(\tau)$ belongs to $\G_\tau$, where $\tau = (\{1\},R,r,\alpha)$.
\end{ex}

The corresponding generalized quasianalytic class $\G$ is then defined as follows: recall from \cite[Section 3]{RealO-minGamma} that a series $F = \sum_{\alpha \in \NN^m} F_\alpha Y^\alpha \in \G_{\tau}\left[\!\left[Y\right]\!\right]$ is \textbf{mixed multisummable} (or \textbf{mixed} for short) \textbf{with polyradius of convergence at least $\rho$}, if $$\|F\|_{\tau,\rho}:= \sum_{\alpha \in \NN^m} \|F_\alpha\|_{\tau} (\rho)^\alpha < \infty.$$  Such a series $F$ defines a holomorphic function $F_{\tau,\rho}:S(\tau) \times D(\rho) \longrightarrow \RR$, given by $$F_{\tau,\rho}(u,w):= \sum_{\alpha \in \NN^m} F_\alpha(u) w^\alpha.$$
The ring $\G_{m,n}$ is then the set of all germs at the origin of functions $f:I_{m,n,(R,\rho)} \into \RR$, for which there exist $\tau = (K,R,r,\phi)$, $\rho \in (0,\infty)^n$ and a mixed series $F = \sum_{\beta} F_\beta Y^\beta \in \Ps{\G_\tau}{Y}$ with radius of convergence at least $\rho$ such that $f$ is the restriction of $F_{\tau,\rho}$ to $I_{m,n,(R,\rho)}$.  

\begin{ex} \label{multisummable_germs}
	By \cite[Props. 1.7(2) and 2.9]{Tougeron:1994fk}, the set $\G_{1,0}$ is exactly the set of all real germs at $0^+$ of Borel sums of power series that are multisummable in the positive real direction, as defined in Balser \cite[Section 10.2]{Balser:2000fk}.
\end{ex}

\begin{defn}\label{predicates}
    For $\rho \in (0,\infty)^m$, let $\mathcal{G}(m,\rho)$ be the set of functions $f : \RR^m \to \RR$ with the following property: there exist a tuple $\tau = (K,R,r,\phi)$ with $R > \rho$ and $\phi \in \left(\frac{\pi}{2},\pi\right)$, and a function $g \in \mathcal{G}_{\tau}$, such that $$f(x) = \begin{cases} g(x) &
    \text{if } x \in [0,\rho], \\ 0 &\text{otherwise.}\end{cases}$$
\end{defn}

For each $m$, the set $\mathcal{G}(m,\rho)$ is a ring that contains all real constant functions on $[0,\rho]$ and is closed under taking partial derivatives $\partial/\partial x_i$ (see \cite[Section 2]{RealO-minGamma}); in particular, each function $f \in \G(m,\rho)$ is of class $C^\infty$ on $[0,\rho]$.  It is shown in \cite[Theorem A]{RealO-minGamma} that the structure 
\[
    \RR_{\mathcal{G}} = \left(\RR,<,+,-,\cdot,0,1,\{f\}_{f \in \G(m,1), m \in \NN}\right)
\]
is model complete, o-minimal and polynomially bounded.  

\begin{theorem}\label{gev_dfbl}
	Let $K \subseteq [0,\infty)^m$ be nonempty and finite, $R \in (0,\infty)^m$, $r>1$ and $\phi \in \left(\frac{\pi}{2},\pi\right)$, and set $\tau := (K,R,r,\phi)$.  Set also $M := \max\{k_1 + \dots + k_m:\ k \in K\}$, let $\mu \in \left(0,\frac{\phi-\pi/2}{M}\right)$ and $\rho \in (0,R)$, and set $\tau':=(K,\rho,r,\mu)$. 
	Then for $f \in \G_\tau$, the restriction of $f$ to $S(\tau')$ is definable in $\RR_{\G}$.  
\end{theorem}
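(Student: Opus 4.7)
My plan is to mimic the strategy of Lemma~\ref{cgp_pol_dfbl}: exhibit $\Re f$ and $\Im f$ on $S(\tau')$ as real mixed multisummable germs in $\G_{m,m}$, which are definable in $\RR_\G$ by the construction of the structure. Parameterize $S(\tau')$ by $(s, u) \in [0,\rho)^m \times \RR^m$ via $z = ((s_j, u_j))_j \in \bar\LL^m$. The key device is an auxiliary complex variable $\theta' \in \CC^m$ and the ``complex shift''
\[
\Phi_j(s', \theta') := \bigl(|s'_j|\,e^{-\Re\theta'_j},\; \arg s'_j + \Im\theta'_j\bigr) \in \bar\LL,
\]
chosen so that $g(s', \theta') := f(\Phi(s', \theta'))$ is holomorphic in $\theta'$ with respect to the $L$-structure on $\bar\LL$, and satisfies $g((s, 0), iu) = f((s, u))$. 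Define $g$ on $S_{\bar\LL}(\tilde\tau) \times D(\delta)^m$, where $\tilde\tau := (K, \rho, \tilde r, \tilde\phi)$, choosing $\tilde\phi \in (\pi/2, \pi)$, $\tilde r > 1$, and $\delta > 0$ so small that $\tilde\phi + M\delta < \phi$ and $\rho\,e^\delta \le R$. The hypothesis $M\mu < \phi - \pi/2$ is exactly what makes these simultaneous constraints satisfiable, and they guarantee $\Phi(s', \theta') \in S(\tau)$, so $g$ is well-defined and holomorphic.

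The central step is to establish that $g$ is a mixed multisummable series with coefficients in $\G_{\tilde\tau}$. Taylor expand $g(s', \theta') = \sum_{\alpha \in \NN^m} G_\alpha(s')\,(\theta')^\alpha$, and lift the decomposition $f = \sum_p f_p$ to $g_p(s', \theta') := f_p(\Phi(s', \theta'))$. One verifies $\Phi(s', \theta') \in S_p(\tau)$ for $s' \in S_p(\tilde\tau)$ and $\theta' \in D(\delta)^m$, using $\rho\,e^\delta \le R$ to match each disk component $D_{\bar\LL}(K, \rho, p)$ of $S_p(\tilde\tau)$ with $D_{\bar\LL}(K, R, p)$ of $S_p(\tau)$. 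Cauchy's integral formula applied on a product of circles of radius $\delta' < \delta$ in the $\theta'$ variables then yields the estimate $\|G_{\alpha, p}\|_{S_p(\tilde\tau)} \le \|f_p\|_{S_p(\tau)}\,(\delta')^{-|\alpha|}$, and summing over $p$ with the $\G_\tau$-decomposition of $f$ gives $\|G_\alpha\|_{\tilde\tau} \le \|f\|_\tau\,(\delta')^{-|\alpha|}$. Hence $\sum_\alpha \|G_\alpha\|_{\tilde\tau}\,(\delta'/2)^{|\alpha|} \le \|f\|_\tau \sum_\alpha 2^{-|\alpha|} < \infty$, placing $g$ in $\G_{\tilde\tau}[[Y]]$ with polyradius of convergence at least $\delta'/2$.

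To pass from $g$ to the real and imaginary parts of $f$, apply the conjugation trick: since $S(\tilde\tau)$ is symmetric under $s' \mapsto \bar s' := (|s'|, -\arg s')$, the function $G_\alpha^*(s') := \overline{G_\alpha(\bar s')}$ is holomorphic (in the $L$-structure) and lies in $\G_{\tilde\tau}$ with the same norm as $G_\alpha$. Hence $A_\alpha := (G_\alpha + G_\alpha^*)/2$ and $B_\alpha := (G_\alpha - G_\alpha^*)/(2i)$ are in $\G_{\tilde\tau}$ and take real values on the positive real axis. Writing $(iu)^\alpha = i^{|\alpha|} u^\alpha$ and separating real and imaginary parts of the identity $g((s, 0), iu) = \sum_\alpha i^{|\alpha|}\,G_\alpha(s, 0)\, u^\alpha = f((s, u))$ produces
\[
\Re f((s, u)) = \sum_\alpha c^\Re_\alpha(s)\,u^\alpha, \qquad \Im f((s, u)) = \sum_\alpha c^\Im_\alpha(s)\,u^\alpha,
\]
where $c^{\Re/\Im}_\alpha \in \{\pm A_\alpha, \pm B_\alpha\}$ according to $|\alpha| \bmod 4$. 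These are real mixed multisummable germs in $\G_{m, m}$, hence definable in $\RR_\G$ by the construction of that structure.

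The main technical obstacle is verifying the Cauchy estimate uniformly, i.e., that $\Phi(s', \theta') \in S_p(\tau)$ for each $s' \in S_p(\tilde\tau)$ and $\theta' \in D(\delta)^m$ in each of the sector/disk components appearing in the decomposition $S_p(\tilde\tau) = \bigcap_{k \in K}(S_{\bar\LL}(k, \rho, \tilde\phi) \cup D_{\bar\LL}(k, \rho, p))$; the interplay between the parameters of $\tilde\tau$ and $\tau$ at each level $p$ must be tracked carefully. A secondary concern is that the polyradius $\delta'/2$ may not cover the full range $\{u \in \RR^m : k \cdot |u| < \mu \text{ for all } k \in K\}$ of real arguments in $S(\tau')$ (for instance when $\mu$ is comparable to $(\phi-\pi/2)/M$ but $\min_{k,j} k_j$ is small); in that case, one covers this (bounded) set by finitely many polydisks around base points $u_0$, performs the analogous analysis with Taylor expansions shifted to $u_0$ via $g_{u_0}(s', \theta') := f(\Phi(s', iu_0 + \theta'))$, and patches the resulting definable pieces by o-minimal cell decomposition in $\RR_\G$.
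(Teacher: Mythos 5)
Your core mechanism is the same as the paper's: complexify the angular variable via a shift $z\mapsto zE(iw)$, check that shifted points stay in the original domain (the paper's Lemma \ref{domain_lemma}), Taylor-expand in the auxiliary variable and use Cauchy estimates on the $p$-decomposition of $f$ to produce a mixed multisummable series with coefficients in a class $\G_{\tilde\tau}$ over a slightly shrunken sector (Lemma \ref{mixed_dfbl}), use conjugation/reflection to extract real and imaginary parts, and conclude definability from the generators of $\RR_\G$ (Corollary \ref{mixed_gev}). The divergence is in how you cover all of $S(\tau')$, and that is where there is a genuine gap. First, you misplace the role of the hypothesis $M\mu<\phi-\pi/2$: your constraints $\tilde\phi\in(\pi/2,\phi)$, $\tilde\phi+M\delta<\phi$ and $\rho e^{\delta}\le R$ are satisfiable for \emph{any} $\phi>\pi/2$ and $\rho<R$, with no reference to $\mu$; the hypothesis is what must guarantee that the expansion converges at all real angles occurring in $S(\tau')$, and in your scheme $\delta$ (hence the polyradius $\delta'/2$) is forced to be small by the radial requirement $\rho e^\delta\le R$, so this burden falls entirely on your patching step. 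Second, the patching itself does not work in the stated generality: for the recentered function $g_{u_0}(s',\theta')=f(\Phi(s',iu_0+\theta'))$ the analogue of Lemma \ref{domain_lemma} needs, for each $k\in K$, $k\cdot|\arg s'|+k\cdot|u_0|+k\cdot|\Im\theta'|<\phi$, i.e.\ essentially $\tilde\phi+\mu+M\delta<\phi$ with $\tilde\phi>\pi/2$, hence $\mu<\phi-\pi/2$. The theorem only assumes $\mu<(\phi-\pi/2)/M$, which implies $\mu<\phi-\pi/2$ only when $M\ge1$; for $M<1$ (allowed, e.g.\ $K=\{1/2\}$) the recentered coefficients would live on sectors of opening at most $\pi/2$, which are not admissible in Definition \ref{predicates} and so not covered by the generators of $\RR_\G$. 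In addition, your claim that the set of real angles $\{u:\ k\cdot|u|<\mu \text{ for all }k\in K\}$ is bounded (so that finitely many polydisks suffice) fails if some coordinate is unconstrained by every $k\in K$.

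The paper resolves exactly this tension differently, and this is where the hypothesis is really used: it fixes $\nu$ strictly between $\mu$ and $(\phi-\pi/2)/M$, performs the Cauchy estimates at radius $\nu$ in each auxiliary coordinate (so the residual sector opening $\epsilon=\phi-M\nu$ stays above $\pi/2$), and then the single expansion at $u_0=0$ already has polyradius of convergence at least $\bar\mu$ thanks to the geometric factor $(\mu/\nu)^{|\alpha|}$ — no recentering or patching is needed. The price is a radial shrink to $R'=R/e^\nu$, which may be smaller than $\rho$; the leftover piece $S(\tau')\setminus S(\tau'')$ is bounded away from the origin, its closure lies in $S(\tau)$, and $f$ restricted to it is handled by the restricted-analytic argument through the log chart exactly as in the proof of Theorem \ref{cgp_dfbl}, hence is definable already in $\RR_{\an}$. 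If you decouple your two constraints in the same way (take the Cauchy radius as large as the hypothesis allows, drop $\rho e^\delta\le R$, and recover the radial shortfall by the $\RR_{\an}$ step), your argument goes through without the problematic recentering.
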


\begin{ex} \label{sterling_dfbl}
	In the case of the Stirling function $\varphi$ of Example \ref{sterling_ex}, for $\psi$ we have $K = \{1\}$ and $M=1$.  Thus, for every $R>0$ and $\mu \in \left(0, \frac\pi2\right)$, the restriction of $\psi \circ \Pi$ to $S_{\bar\LL}(R,\mu) = S_{\bar\LL}(1,R,\mu)$ is definable in $\RR_{\G}$.
\end{ex}

The proof of Theorem \ref{gev_dfbl} needs a bit of preparation:  let $\tau$, $M$ and $\tau'$ be as in the theorem, and 
let $\nu \in \left(\mu,\frac{\phi-\pi/2}{M}\right)$. 
For $j=1,\dots,m$, we set $\displaystyle R_j' := \frac{R_j}{e^{\nu}}$ and write $R' = (R_1',\dots,R_m')$, $\bar\mu = (\overbrace{\mu,\dots,\mu}^{m\text{ times}})$ and $\bar{\nu} = (\overbrace{\nu,\dots,\nu}^{m\text{ times}})$.
We set $\delta:= \phi- M\mu$, $\epsilon:= \phi-M\nu$ and $$\sigma := (K,R',r,\epsilon);$$  then $\frac\pi2 < \epsilon < \delta < \phi$.

\begin{lemma} \label{domain_lemma}
    Let $z \in S_p(\sigma)$ and $w \in D(\bar\nu)$.  Then $z E(iw) \in S_p(\tau)$.
\end{lemma}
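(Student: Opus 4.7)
The plan is to verify the containment $zE(iw) \in S_p(\tau)$ by direct unpacking of the definitions, coordinate by coordinate. First I would translate the action of $E(iw)$ into $\bar\LL$ coordinates: writing $w_j = u_j + iv_j$ with $|u_j|,|v_j| \le |w_j| < \nu$, we have $E(iw_j) = E(-v_j + iu_j) = (e^{v_j},u_j)$, so the $j$th component of $zE(iw)$ has modulus $|z_j|e^{v_j}$ and argument $\arg z_j + u_j$.

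Next I would check the polydisc condition $zE(iw) \in D_{\bar\LL}(R)$: since $z \in S_p(\sigma) \subseteq D_{\bar\LL}(R')$, we have $|z_j| < R_j'= R_j e^{-\nu}$, which combined with $e^{v_j} < e^{\nu}$ gives $|z_j|e^{v_j} < R_j$. Then, for each fixed $k \in K$, I would split on whether $z$ lies in the sector part $S_{\bar\LL}(k,R',\epsilon)$ or the ``small-modulus'' part $D_{\bar\LL}(k,R',p)$ of $S_p(\sigma)$.

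In the sector case, the triangle inequality gives
\[
k \cdot |\arg(zE(iw))| \;\le\; k \cdot |\arg z| + \sum_{j} k_j |u_j| \;<\; \epsilon + \nu(k_1+\cdots+k_m) \;\le\; \epsilon + M\nu \;=\; \phi,
\]
placing $zE(iw)$ in $S_{\bar\LL}(k,R,\phi)$. In the small-modulus case, letting $\sigma_k := k_1 + \cdots + k_m \le M$,
\[
|(zE(iw))^k| \;=\; |z^k|\prod_{j} e^{k_j v_j} \;\le\; |z^k| e^{\nu\sigma_k} \;<\; \frac{(R')^k}{p+1}\,e^{\nu\sigma_k} \;=\; \frac{R^k e^{-\nu\sigma_k}}{p+1}\,e^{\nu\sigma_k} \;=\; \frac{R^k}{p+1},
\]
so $zE(iw) \in D_{\bar\LL}(k,R,p)$. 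Either way, $zE(iw) \in S_{\bar\LL}(k,R,\phi,p)$, and intersecting over $k \in K$ yields $zE(iw) \in S_p(\tau)$.

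There is no genuine obstacle here; the lemma is essentially a bookkeeping exercise. The parameters are tuned precisely for this argument: the multiplier $E(iw)$ contributes at most a factor $e^\nu$ in each modulus and an extra $\nu$ in each argument, and these excesses are absorbed exactly by the $e^{-\nu}$ shrinkage built into $R'$ and the angular slack $\phi - \epsilon = M\nu$ built into $\sigma$. The only care needed is to compute $(R')^k = R^k e^{-\nu \sigma_k}$ correctly, so that the factor $e^{\nu\sigma_k}$ gained in the small-modulus case cancels against the shrinkage rather than against the uniform bound $M$.
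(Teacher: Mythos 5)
Your proof is correct and takes essentially the same route as the paper's: compute $zE(iw)$ coordinatewise, verify the polydisc bound, and for each $k\in K$ split into the sector case (angle bound $\epsilon + M\nu = \phi$) and the small-modulus case (modulus bound absorbed by the $e^{-\nu}$ shrinkage built into $R'$). Two minor points in your favour: your computation $(R')^k = R^k e^{-\nu(k_1+\cdots+k_m)}$ is the precise one (the paper's intermediate line writes $R^k e^{-M\nu}$, which is harmless since $k_1+\cdots+k_m \le M$ and the final inequality still follows), and your modulus $|z_j|e^{v_j}$ for $z_jE(iw_j)$ is the one consistent with the convention $L(r,\theta) = -\log r + i\theta$ (the paper writes $|z_j|e^{-v_j}$; immaterial, since $v_j$ ranges symmetrically over $(-\nu,\nu)$).
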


\begin{proof}
	It suffices to prove the lemma for $K = \{k\}$ a singleton.
	Write $z \in \bar\LL^m$ as $z =  \left(\left(|z_1|,\arg(z_1)\right),\dots,\left(|z_m|,\arg(z_m)\right)\right)$ and split the vector $w \in D$ into its real and imaginary parts: $w = u + iv$ with $u,v \in \left(-\nu,\nu\right)^m$.
	Then, given $z \in \bar\LL^m$ and $w \in \CC^m$, we find $x,y \in \RR^m$ such that $x E(iy) = z E(iw)$ as follows:
	\begin{align*}
	    z E(iw) 
        &= \big(z_1E(iw_1), \cdots, z_mE(iw_m)\big) \\
        &= \left(\left(|z_1|e^{v_1}, \arg z_1 + u_1\right), \dots, \left(|z_m|e^{v_m}, \arg z_m+u_m\right)\right) .
	\end{align*}
	So we take $x_j := |z_j| e^{v_j}$ for each $j$ and $ y := \arg z+u$.

	First suppose $z \in S(\sigma)$. 
	Then $\displaystyle |z_j| \le R_j' = \frac{R_j}{e^{\nu}}$ for each $j$.
	Since $w \in D(\bar\nu)$, we have $|u_j| < \nu$ and $|v_j| < \nu$ for each $j$.
	So $|x_j| = |z_j|e^{v_j} \le R_j$.
	By hypothesis, we have $k\cdot |\arg z|<\epsilon$.
	Therefore,
	\begin{align*}
    k \cdot |\arg(zE(iw))| &= k \cdot|y| \\
        &= k \cdot |\arg z+u| \\
        &< k \cdot |\arg z| + k \cdot |u| \\
        &< \epsilon + M\nu = \phi;
	\end{align*}
	hence $zE(iw) \in S(\tau)$ in this case.

	Now suppose $z \in D_{\bar\LL}(k,R',p)$.
	Then $|z|^k = \left|z^k\right| < \frac{(R')^k}{p+1} = \frac{R^k}{(p+1)e^{M\nu}}$.
	Therefore,
	\begin{align*}
	    \left|(zE(iw))^k\right| 
	        &= x^k \\
	        &\le \left|z\right|^k(e^{v})^k\\
	        &< \frac{R^k}{p+1}\frac{e^{k \cdot v}}{e^{M\nu}}\\
	        &< \frac{R^k}{p+1}.
	\end{align*}
	So $zE(iw) \in D_{\bar\LL}(k,R,p)$ in this case. 
\end{proof}

We now fix a sequence $(f_p)_{n \in \NN}$ such that $f =_\tau \sum_p f_p$.  By Lemma \ref{domain_lemma}, there are holomorphic functions $g_p,\hat{g_p}:S_p(\sigma) \times D(\bar\nu) \into \CC$ defined by $$g_p(z,w) := f_p(z E(iw)) \quad\text{ and }\quad \hat{g_p}(z,w) := \overline{f_p(\bar z E(-i\bar w))}.$$
Then the two functions $f_p^r,f_p^i:S_p(\sigma) \times D(\bar\nu) \into \CC$ defined by
\begin{equation*}
    f_p^r(z,w) := \frac{g_p(z,w) + \hat{g_p}(z,-w)}{2} \quad\text{ and }\quad
    f_p^i(z,w) := \frac{g_p(z,w) - \hat{g_p}(z,-w)}{2}
\end{equation*}
satisfy the following: for all real $(x,\theta) \in S_p(\sigma) \times D(\bar\nu)$, we have 
\begin{equation} \label{real_part}
    f_p^r(x,\theta) 
        = \frac{g_p(x,\theta) + \hat{g_p}(x,-\theta)}{2} 
        = \frac{f_p(xE(i\theta)) + \overline{f_p(xE(i\theta))}}{2} 
        = \Re f_p(xE(i\theta))
\end{equation}
and similarly 
\begin{equation} \label{im_part}
	f_p^i(x,\theta) = \Im f_p(xE(i\theta)).
\end{equation}

\begin{lemma} \label{convergence_lemma}
	The sums $\sum_{p}f_p^r$ and $\sum_{p}f_p^i$ converge to holomorphic functions $f^r$ and $f^i$ on $S(\sigma) \times D(\bar\nu)$, respectively.  
\end{lemma}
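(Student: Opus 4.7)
My plan is to deduce uniform convergence of $\sum_p f_p^r$ and $\sum_p f_p^i$ on $S(\sigma) \times D(\bar\nu)$ directly from the hypothesized summability $\sum_{p} \|f_p\|_{S_p(\tau)} r^p < \infty$, using Lemma \ref{domain_lemma} combined with elementary symmetries, and then upgrade continuity to holomorphy on the interior via the standard Weierstrass theorem.

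The first step is to establish the uniform bound $\|f_p^r\|_{S_p(\sigma) \times D(\bar\nu)},\ \|f_p^i\|_{S_p(\sigma) \times D(\bar\nu)} \le \|f_p\|_{S_p(\tau)}$ for every $p$. The bound on $g_p$ is immediate from Lemma \ref{domain_lemma}: if $(z,w) \in S_p(\sigma) \times D(\bar\nu)$, then $zE(iw) \in S_p(\tau)$, whence $|g_p(z,w)| \le \|f_p\|_{S_p(\tau)}$. For $\hat g_p$, I plan to exploit that $S_p(\sigma)$ is stable under the involution $z \mapsto \bar z := (|z|,-\arg z)$ and $D(\bar\nu)$ is stable under $w \mapsto -\bar w$; applying Lemma \ref{domain_lemma} to $\bar z$ and $-\bar w$ in place of $(z,w)$ gives $\bar z E(-i\bar w) \in S_p(\tau)$, hence the analogous bound on $\hat g_p$. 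The triangle inequality then delivers the claim for $f_p^r$ and $f_p^i$.

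Second, since $S(\sigma) \subseteq S_p(\sigma)$ for every $p$, the bound restricts to
$$\sum_{p \in \NN} \|f_p^r\|_{S(\sigma) \times D(\bar\nu)}\, r^p \le \sum_{p \in \NN} \|f_p\|_{S_p(\tau)}\, r^p < \infty,$$
and analogously for $f_p^i$; here the last inequality is the defining property of $f =_\tau \sum_p f_p$. Because $r > 1$, this already implies $\sum_p \|f_p^r\|_{S(\sigma) \times D(\bar\nu)} < \infty$, and the Weierstrass $M$-test furnishes absolute uniform convergence of $\sum_p f_p^r$ and $\sum_p f_p^i$ on $S(\sigma) \times D(\bar\nu)$ to continuous functions $f^r$ and $f^i$.

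Finally, each $g_p$ is holomorphic on the interior of $S_p(\sigma) \times D(\bar\nu)$, being the composition of the holomorphic $f_p$ with the holomorphic map $(z,w) \mapsto zE(iw)$; and $\hat g_p(z,w) = \overline{f_p(\bar z E(-i\bar w))}$ is holomorphic there as the conjugate of an antiholomorphic composition. So every $f_p^r$ and $f_p^i$ is holomorphic on the interior, and the classical Weierstrass theorem on uniform limits of holomorphic functions promotes $f^r$ and $f^i$ to holomorphic functions on the interior of $S(\sigma) \times D(\bar\nu)$. The only genuine obstacle I foresee is the symmetry verification for $\hat g_p$ in the first step; once that bound is in hand, the rest is routine bookkeeping.
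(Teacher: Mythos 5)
Your proof is correct and follows essentially the same route as the paper: bound $|f_p^r|$ and $|f_p^i|$ on $S_p(\sigma)\times D(\bar\nu)$ by $\|f_p\|_{S_p(\tau)}$ via Lemma \ref{domain_lemma}, weight by $r^p$, and conclude by normal convergence plus the Weierstrass theorem on the interior. Your explicit verification that $\bar z\in S_p(\sigma)$ and $-\bar w\in D(\bar\nu)$, so that Lemma \ref{domain_lemma} also controls $\hat{g_p}$, is in fact a slightly more careful rendering of the triangle-inequality step that the paper states somewhat tersely.
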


\begin{proof}
First, observe that for all $(z,w) \in S_p(\sigma) \times D(\bar\nu)$, we have
\begin{align*}
    |f_p^r(z,w)| 
        &= \left|\frac{g_p(z,w) + \hat{g_p}(z,-w)}{2}\right| \\
        &= \frac{\left|f_p(zE(iw)) + \overline{f_p(\bar zE(i\bar w))}\right|}{2} \\
        &\le \left|f_p(zE(iw))\right| \\
        &\le \|f_p\|_{S_p(\tau)}
\end{align*}
and similarly, $|f_p^i(z,w)| \le \|f_p\|_{S_p(\tau)}$.
Recall that $r \in (1,\infty)$ is such that $\displaystyle \sum_{p \in \NN}\|f_p\|_{S_p(\tau)}\cdot r^p < \infty$.
So
\[
    \sum_{p \in \NN}\|f_p^r\|_{S_p(\sigma) \times D(\bar\nu)}\cdot r^p
        \le \sum_{p \in \NN}\|f_p\|_{S_p(\tau)}\cdot r^p < \infty,
\]
and similarly for $f^i$, so the lemma follows.
\end{proof}

\begin{lemma} \label{mixed_dfbl}
	There are mixed series $F^r, F^i \in \G_{\sigma}\left[\!\left[Y\right]\!\right]$ with polyradius of convergence at least $\bar\mu$ such that the restrictions of $f^r $ and $f^i$ to $S(\sigma) \times D(\bar\mu)$ agree with $F^r_{\sigma,\bar\mu}$ and $F^i_{\sigma,\bar\mu}$, respectively.
\end{lemma}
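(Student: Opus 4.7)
The plan is to obtain $F^r$ (and, identically, $F^i$) as the Taylor expansion of $f^r$ in the variable $w$ around $w=0$, with coefficients lying in $\G_\sigma$ by summing the Taylor expansions of the individual holomorphic functions $f_p^r$. Concretely, for each $p \in \NN$ and $\alpha \in \NN^m$, I would fix $z \in S_p(\sigma)$ and apply Cauchy's integral formula on the polydisc $D(\bar\nu)$ to the holomorphic function $w \mapsto f_p^r(z,w)$, defining
$$F_{p,\alpha}^r(z) := \frac{1}{\alpha!}\frac{\partial^\alpha f_p^r}{\partial w^\alpha}(z,0).$$
The bound $|f_p^r(z,w)| \le \|f_p\|_{S_p(\tau)}$ established in the proof of Lemma \ref{convergence_lemma}, combined with Cauchy's inequalities, gives $\|F_{p,\alpha}^r\|_{S_p(\sigma)} \le \bar\nu^{-\alpha}\|f_p\|_{S_p(\tau)}$, and consequently
$$\sum_{p \in \NN} \|F_{p,\alpha}^r\|_{S_p(\sigma)} r^p \le \bar\nu^{-\alpha} \sum_{p \in \NN} \|f_p\|_{S_p(\tau)} r^p < \infty;$$
hence $F_\alpha^r := \sum_p F_{p,\alpha}^r$ belongs to $\G_\sigma$ with $\|F_\alpha^r\|_\sigma \le \bar\nu^{-\alpha} \sum_p \|f_p\|_{S_p(\tau)} r^p$. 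The functions $F_{p,\alpha}^i$ and $F_\alpha^i$ would be constructed analogously.

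I would then set $F^r := \sum_\alpha F_\alpha^r Y^\alpha$ and verify convergence at polyradius $\bar\mu$ via the geometric-series estimate made possible by the strict inequality $\mu < \nu$:
$$\|F^r\|_{\sigma,\bar\mu} = \sum_\alpha \|F_\alpha^r\|_\sigma \bar\mu^\alpha \le \Bigl(\sum_p \|f_p\|_{S_p(\tau)} r^p\Bigr) \prod_{j=1}^m \frac{1}{1-\mu/\nu} < \infty,$$
so $F^r$ is a mixed series with polyradius of convergence at least $\bar\mu$; the same computation handles $F^i$.

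Finally, to identify $F^r_{\sigma,\bar\mu}$ with $f^r$ on $S(\sigma) \times D(\bar\mu)$, I would interchange the order of summation in a double series. For $(z,w) \in S(\sigma) \times D(\bar\mu)$, Taylor's theorem inside the polydisc $D(\bar\nu)$ yields $f_p^r(z,w) = \sum_\alpha F_{p,\alpha}^r(z) w^\alpha$, while the same Cauchy bound delivers absolute convergence of $\sum_{p,\alpha} |F_{p,\alpha}^r(z)||w|^\alpha$, which justifies Fubini:
$$F^r_{\sigma,\bar\mu}(z,w) = \sum_\alpha F_\alpha^r(z) w^\alpha = \sum_p f_p^r(z,w) = f^r(z,w),$$
and likewise for $F^i$. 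The only delicate step is the absolute-convergence bookkeeping that legitimizes the Fubini interchange, but this is a direct consequence of the same $\mu < \nu$ geometric-series estimate already used; the rest is routine application of Cauchy's inequalities and the additivity of $\|\cdot\|_\sigma$ along the decomposition defining $\G_\sigma$.
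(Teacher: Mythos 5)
Your proposal is correct and follows essentially the same route as the paper: Taylor-expand each $f_p^r$ in $w$ at $0$, use Cauchy's estimates on the polydisc $D(\bar\nu)$ to bound the coefficient functions, sum over $p$ to get coefficients $F^r_\alpha \in \G_\sigma$, and exploit $\mu < \nu$ for the geometric-series bound giving polyradius of convergence at least $\bar\mu$, with the final identification by interchanging the double sum (the paper phrases this as ``uniform convergence and Taylor's Theorem again''). The only cosmetic difference is that you carry the $1/\alpha!$ factor explicitly via Cauchy's integral formula, which is in fact slightly cleaner than the paper's notation.
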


\begin{proof}
	We give the proof for $f^r$; the proof for $f^i$ is similar.  To simplify notation, we omit the superscript $r$ below.
	Fix $p \in \NN$; by Taylor's Theorem we have, for each $(z,w) \in S_p(\sigma) \times D(\bar\nu)$, that 
	$$f_p(z,w) = \sum_{\alpha \in \NN^m} \frac{\partial^\alpha f_p}{\partial w^\alpha}(z,0) w^\alpha.$$
	For each $\alpha \in \NN^m$, define $f_{p,\alpha}:S_p(\sigma) \into \CC$ by $f_{p,\alpha}(z):= \frac{\partial^\alpha f_p}{\partial w^\alpha}(z,0)$.
	It follows from Cauchy's estimates that, for each $\alpha \in \NN^n$, $$\left\| f_{p,\alpha} \right\|_{S_p(\sigma)} \le \frac{\|f_p\|_{S_p(\sigma) \times D(\bar\nu)}}{\nu^{\alpha_1+\cdots+\alpha_m}}.$$  Now fix $\alpha \in \NN^m$.  Then $\sum_{p \in \NN} \|f_{p,\alpha}\|_{S_p(\sigma)}\cdot r^p < \infty$, so the function $f_\alpha: S(\sigma) \into \CC$ defined by $$f_\alpha(z):= \sum_{p \in \NN} f_{p,\alpha}(z)$$ belongs to $\G_{\sigma}$ and satisfies $$\|f_\alpha\|_{\sigma} \le \frac1{\nu^{\alpha_1+\cdots+\alpha_m}}\sum_{p \in \NN} \|f_p\|_{S_p(\sigma) \times D(\bar\nu)}\cdot r^p.$$  Therefore, we have $$\sum_{\alpha \in \NN^m} \|f_\alpha\|_{\sigma} (\bar\mu)^\alpha  = \sum_{\alpha \in \NN^m} \|f_\alpha\|_{\sigma} \mu^{\alpha_1+\cdots+\alpha_m}\le \left(\sum_{p \in \NN} \|f_p\|_{S_p(\sigma) \times D(\bar\nu)}\cdot r^p\right) \cdot \sum_{\alpha \in \NN^m} \left(\frac\mu\nu\right)^{\alpha_1+\cdots+\alpha_m} < \infty,$$ so the series $F := \sum_{\alpha \in \NN^m} f_\alpha X^\alpha \in \G_{\sigma}\left[\!\left[X\right]\!\right]$ is mixed and has polyradius of convergence at least $\bar\mu$.  By uniform convergence and Taylor's Theorem again, it follows that the restriction of $f$ to $S(\sigma) \times D(\bar\nu)$ agrees with $F_{\sigma,\bar\mu}$.
\end{proof}

\begin{cor}  \label{mixed_gev}
	Let $\tau'':= (K,R',r,\mu)$.  Then for $f \in \G_\tau$, the restriction of $f$ to $S(\tau'')$ is definable in $\RR_{\G}$.
\end{cor}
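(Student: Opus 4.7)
The plan is to apply Lemma \ref{mixed_dfbl} to realize the real and imaginary parts of $f$---under the real parametrization $z = rE(i\theta)$---as germs in the ring $\G_{m,m}$, and hence as definable functions in $\RR_{\G}$ by the very construction of that structure; the remaining task is to extend definability from a ``box'' subset of $S(\tau'')$ to the entire sector by a finite angular cover.

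More precisely, by Lemma \ref{mixed_dfbl} there exist mixed series $F^r, F^i \in \G_\sigma[\![Y]\!]$ with polyradius of convergence at least $\bar\mu$ such that $f^r = F^r_{\sigma,\bar\mu}$ and $f^i = F^i_{\sigma,\bar\mu}$ on $S(\sigma) \times D(\bar\mu)$. By the very definition of the ring $\G_{m,m}$, the restrictions of $F^r_{\sigma,\bar\mu}$ and $F^i_{\sigma,\bar\mu}$ to the real box $I_{m,m,(R',\bar\mu)} = (0,R')^m \times (-\mu,\mu)^m$ are germs in $\G_{m,m}$; after the extension-by-zero construction of Definition \ref{predicates}, they become functions in some $\G(2m,\rho')$, and are thus definable in $\RR_{\G}$. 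Combining with equations \eqref{real_part} and \eqref{im_part}, we conclude that $\Re f(rE(i\theta))$ and $\Im f(rE(i\theta))$ are definable as real functions of $(r,\theta) \in (0,R')^m \times (-\mu,\mu)^m$, which establishes definability of $f$ on the subset $\{z \in S(\tau'') : |\arg z_j| < \mu \text{ for each } j\}$.

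The remaining issue is that for general $z \in S(\tau'')$, the real parametrization $(r,\theta) = (|z|, \arg z)$ yields $\theta$ ranging over the polytope $P := \{\theta \in \RR^m : k \cdot |\theta| < \mu \text{ for all } k \in K\}$, which may extend beyond $(-\mu,\mu)^m$ (for example, when $K = \{(\tfrac12,\tfrac12)\}$, one only gets $|\theta_1|+|\theta_2| < 2\mu$). I would handle this by a finite covering argument: cover $P$ by finitely many translates $\eta + (-\mu,\mu)^m$ with $\eta$ ranging over a suitable finite set $E \subseteq \RR^m$, and for each $\eta \in E$ consider the angularly-shifted function $\tilde f_\eta(z) := f(zE(i\eta))$, which belongs to an analogous $\G$-class over a sector rotated by $\eta$. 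Applying the above construction to $\tilde f_\eta$ and composing with the trivially definable argument-shift $z \mapsto zE(-i\eta)$ yields definability of $f$ on the corresponding $\eta$-piece of $S(\tau'')$; a finite union then covers $S(\tau'')$.

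The main obstacle is this last covering step, specifically the verification that each shifted function $\tilde f_\eta$ actually lies in a suitable $\G$-class so that Lemma \ref{mixed_dfbl} may be reapplied. This amounts to a careful bookkeeping of the sector parameters under small angular rotations, grounded in the rotation-invariance of multisummability within its summability cone.
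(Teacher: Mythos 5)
Your first two steps are exactly the paper's argument: the paper also combines Lemma \ref{mixed_dfbl} with the identities \eqref{real_part} and \eqref{im_part} and with definability in $\RR_{\G}$ of the real restrictions of $F^r_{\sigma,\bar\mu}$ and $F^i_{\sigma,\bar\mu}$. Two remarks on how you justify and use this. First, ``by the very definition of the ring $\G_{m,m}$'' only gives you a germ at the origin (and Definition \ref{predicates} only gives predicates for functions on a closed box, extended by zero); what is needed is definability of the mixed sums on the whole real trace of $S(\sigma)\times D(\bar\nu)$, which the paper gets by citing Lemmas 3.5 and 5.1 of the multisummability paper, not from the definition of the structure alone. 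Second, and more importantly, the paper does not confine the angular variable to $(-\mu,\mu)^m$: since $f^r$ and $f^i$ live on all of $S(\sigma)\times D(\bar\nu)$ and satisfy \eqref{real_part}, \eqref{im_part} at every real point there, the paper parametrizes with $x=|z|$ real in $S(\sigma)$ and $\theta=\arg z$ real in $D(\bar\nu)$, and simply records that every $z\in S(\tau'')$ arises this way; there is no covering argument at all.

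This is where your proposal both diverges and remains incomplete. The covering step is your own addition, and its pivotal claim --- that each rotated function $\tilde f_\eta(z)=f(zE(i\eta))$ lies in a class $\G_{\tau_\eta}$ whose aperture still exceeds $\frac\pi2$ by enough to rerun Lemma \ref{domain_lemma} and Lemma \ref{mixed_dfbl} --- is precisely what you leave unverified, so as written there is a genuine gap. (It can be closed: a real shift $\eta$ with $k\cdot|\eta|\le\mu$ for all $k\in K$ maps $S_p(K,R,r,\phi-\mu)$ into $S_p(\tau)$, so $\tilde f_\eta\in\G_{(K,R,r,\phi-\mu)}$, and $\phi-\mu>\frac\pi2$ whenever $M\ge1$; but this bookkeeping is the content of the step, not a formality.) Note also that the covering is only needed when, for some coordinate $j$, every $k\in K$ has $k_j<\mu/\nu$: otherwise $k\cdot|\arg z|<\mu$ for all $k\in K$ already forces $|\arg z_j|<\nu$ for every $j$, so the single real parametrization over $S(\sigma)\times D(\bar\nu)$ covers $S(\tau'')$ and your step three is superfluous --- this is the situation in all of the paper's applications (e.g.\ $K=\{1\}$ for the Stirling function). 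In the degenerate regime you worry about (small or vanishing entries $k_j$ for all $k\in K$), the paper's own one-line ``if and only if'' needs the same care, and if some coordinate is entirely unconstrained by $K$ the conclusion can even fail (the restriction of $z\mapsto z_j^{1/2}$ to such an $S(\tau'')$ oscillates in $\arg z_j$ over an unbounded range); so your observation is a fair comment on the generality of the statement, but your proposed repair is not carried far enough to count as a proof.
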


\begin{proof}
	First, note that $z \in S(\tau'')$ if and only if there is a real $(x,\theta) \in S(\sigma) \times D(\bar\nu)$ such that $z = xE(i\theta)$.
	
	Second, if $(x,\theta) \in S(\sigma) \times D(\bar{\nu})$ is real, then by Equation \ref{real_part},
	\begin{align*}
	    f^r(x,\theta) &= \sum_p f_p^r(x,\theta) \\
	        &= \sum_p \Re f_p(xE(i\theta)) \\
	        &= \Re f(xE(i\theta)),
	\end{align*}
	and similarly, by Equation \ref{im_part}, $f^i(x,\theta) = \Im f(xE(i\theta))$.
	
	Third, by \cite[Lemmas 3.5 and 5.1]{RealO-minGamma}, the restrictions to $(S(\sigma) \times D(\bar\nu)) \cap (0,\infty)^m \times \RR^{m+2n}$ of the functions $F^r_{\sigma,\bar\mu}$ and $F^i_{\sigma,\bar\mu}$ obtained in Lemma \ref{mixed_dfbl} are definable in $\RR_{\G}$.
\end{proof}

\begin{proof}[Proof of Theorem \ref{gev_dfbl}]
	Note that $S(\tau)$ is an open neighbourhood of the closure of the set $$\Omega:= S(\tau')\setminus S(\tau''),$$ and recall that $f$ is holomorphic. Using Example \ref{dfbl_exs}(1) and arguing as in the proof of Theorem \ref{cgp_dfbl}, we therefore obtain that the restriction of $f$ to $\Omega$ is definable in $\RR_{\an}$, hence in $\RR_{\G}$.  Together with Corollary \ref{mixed_gev}, this proves the theorem.
\end{proof}

\section{Optimality for the Stirling function}  \label{optimal_sec}

Throughout this section, $\varphi$ denotes the Stirling function introduced in Example \ref{sterling_ex}.  Since the restriction of $\psi \circ \Pi$ to any sector $S_{\bar{\LL}}(\{1\},R,\phi)$, for any $R>0$ and $\phi \in \left(\frac\pi2,\pi\right)$,  belongs to $\G_\tau$ for some $\tau = (\{1\},R,r,\phi)$, we get the following from Theorem \ref{gev_dfbl}: for $\alpha \in (0,\pi)$, set $$S^\infty(R,\alpha):= \{z \in \CC:\ |z|>R, |\arg z| < \alpha\}.$$

\begin{cor} \label{stirling_cor}
	Let $R>0$ and $\alpha \in \left(0,\frac\pi2\right)$.  Then the restriction of $\varphi$ to $S^\infty(R,\alpha)$ is definable in $\RR_{\G}$. \qed
\end{cor}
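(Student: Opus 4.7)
The plan is to reduce the claim to the definability of $\psi$ near the origin via the substitution $z \mapsto 1/z$, and then to transport definability between $\bar\LL$ and $\CC$ via the principal branch $\Pi_0$.

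First, I would set $R' := 1/R$ and $\mu := \alpha$. Since $\mu \in \left(0,\frac\pi2\right)$, Example \ref{sterling_dfbl} gives that the restriction of $\psi$ to the sector $S_{\bar\LL}(R',\mu) \subseteq \bar\LL$ is definable in $\RR_{\G}$.

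Next, I would apply Remark \ref{dfbl_rmk} to pass from the $\bar\LL$-sector to the corresponding $\CC$-sector. Since $\RR_{\an}$ is a reduct of $\RR_{\G}$ (as is already used in the proof of Theorem \ref{gev_dfbl}), restricted sine and cosine, and hence $e^{i\theta}:(-\pi,\pi) \to \CC$, are definable in $\RR_{\G}$. The image $\Omega := \Pi_0(S_{\bar\LL}(R',\mu)) = \{w \in \CC : |w| < R',\ |\arg w| < \mu\}$ avoids the negative real axis because $\mu < \pi/2$, so Remark \ref{dfbl_rmk}, applied to $\tilde\psi := \psi \circ \Pi_0^{-1} : \Omega \to \CC$ (whose composition with $\Pi_0$ is the definable restriction of $\psi$ from the previous step), yields that $\tilde\psi$ itself is definable in $\RR_{\G}$.

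Finally, I would compose with inversion. For $z \in S^\infty(R,\alpha)$, one has $|1/z| < R'$ and $|\arg(1/z)| = |\arg z| < \mu$, so $1/z \in \Omega$, and the defining identity $\psi(w) = \varphi(1/w)$ gives $\varphi(z) = \tilde\psi(1/z)$. Since $z \mapsto 1/z$ is rational, hence definable in the real field and a fortiori in $\RR_{\G}$, the composition $\tilde\psi \circ (z \mapsto 1/z)$ is definable in $\RR_{\G}$, establishing the corollary.

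The argument is essentially bookkeeping across three layers --- the real-field-definable inversion, the $\Pi_0$-transfer between $\bar\LL$ and $\CC$, and the invocation of Example \ref{sterling_dfbl} --- and I do not anticipate a substantial mathematical obstacle; the only subtlety is checking that the condition $\alpha < \frac\pi2$ simultaneously ensures that $\Omega$ avoids the negative real axis (so that $\Pi_0^{-1}$ is well defined on $\Omega$) and that $\mu$ lies in the allowed range for Example \ref{sterling_dfbl}.
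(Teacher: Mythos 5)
Your proposal is correct and follows essentially the same route as the paper, which treats the corollary as an immediate consequence of Theorem \ref{gev_dfbl} (via Example \ref{sterling_dfbl}) applied to $\psi(z)=\varphi(1/z)$. You merely make explicit the bookkeeping the paper leaves implicit, namely the transfer from $\bar\LL$ to $\CC$ via $\Pi_0$ (using Remark \ref{dfbl_rmk} and the definability of restricted $\sin,\cos$ in $\RR_{\G}$) and the semialgebraic substitution $z\mapsto 1/z$.
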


The next proposition shows that Corollary \ref{stirling_cor} is optimal for definability of the Stirling function on sectors bisected by the positive real half-line.  Recall from \cite[Exercise 5.42]{Sauzin} that $\varphi$ has asymptotic expansion 
$$\hat\varphi(X) = \sum_{k \ge 1} \frac{B_{2k}}{2k(2k-1)} X^{1-2k}$$ at $\infty$,
where the Bernoulli numbers $B_{2k} \in \RR$ are defined such that the convergent series $\sum_{k \ge 1} \frac{B_{2k}}{(2k)!}X^{2k}$ is the Taylor series at 0 of the analytic function $x \mapsto \frac{x}{e^x-1} - 1 + \frac x2$.  

\begin{rmks} \label{stirling_rmk}
	\begin{enumerate}
		\item The series $\hat\varphi$ is divergent and, by \cite[Theorem 5.41]{Sauzin}, $\hat\psi(X) = \hat\varphi(1/X)$ is 1-summable in every direction $d \in \left(-\frac\pi2,\frac\pi2\right)$ with corresponding Borel sum $\psi:\CC \setminus (0,-\infty) \into \CC$.
	  	\item As pointed out in \cite[Exercise 5.46]{Sauzin}, the series $\hat\psi$ is also 1-summable in every direction $d \in \left(\frac\pi2,\frac{3\pi}2\right)$, with corresponding Borel sum $\psi_2:\CC\setminus (0,\infty) \into \CC$.  
	  	\item The function $\varphi_2:\CC\setminus (0,\infty) \into \CC$ defined by $\varphi_2(z):= \psi_2(1/z)$  satisfies
		\begin{equation*}
			\varphi_2(z) = -\varphi(-z) \quad\text{for } z \in \CC\setminus(0,\infty)
		\end{equation*}
		and
		\begin{equation*}
			\varphi(z) - \varphi_2(z) = \sum_{m\ge 1}\frac{e^{-2\pi i k z}}{k} = 	-\log(1-e^{-2\pi i z}) \quad\text{for } \Im z < 0.
		\end{equation*}
		\item Since $\varphi$ and $\psi$ are holomorphic and take real values on $[0,\infty)$, it follows from the Schwartz Reflection Principle that $\varphi(z) = \overline{\varphi(\overline z)}$ and $\psi(z) = \overline{\psi(\overline z)}$ for $z \in \CC \setminus (0,-\infty)$.  
		\item Since the support of $\hat\varphi$ consists of only odd numbers, there is $G \in \Ps{R}{X}$ such that $$\hat\varphi(iX) = iG(1/X),$$
		i.e., the real part of $\hat\varphi(iX)$ is 0. 
	\end{enumerate}
\end{rmks}

\begin{prop} \label{im_not_dfbl}
	For any $a>0$, the restrictions of $\varphi$ to the segments $i(a,\infty)$ and $-i(a,\infty)$ are not definable in $(\RR_{\G},\exp)$.
\end{prop}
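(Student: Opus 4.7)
The plan is to assume for contradiction that $\varphi|_{i(a,\infty)}$ is definable in $(\RR_\G,\exp)$, and to derive a contradiction by showing that the asymptotic expansion of $\Im\varphi(iy)$ at $+\infty$ is a formal series which is \emph{not} Borel summable in the positive real direction, while any definable germ in $(\RR_\G,\exp)$ having such an asymptotic expansion would force that summability.

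I begin by reducing to $\Im\varphi(iy)$. By Remark \ref{stirling_rmk}(4), $\varphi|_{-i(a,\infty)}$ is definable iff $\varphi|_{i(a,\infty)}$ is. Remark \ref{stirling_rmk}(3), evaluated at $z=-iy$ and using $\varphi_2(z) = -\varphi(-z)$, yields
$$\varphi(iy) + \varphi(-iy) = -\log(1-e^{-2\pi y});$$
the right-hand side is definable in $(\RR_\G,\exp)$, so $\Re\varphi(iy)$ is automatically definable, and the hypothesis reduces to the definability of $\Im\varphi(iy) = \tfrac{1}{2i}\bigl(\varphi(iy)-\varphi(-iy)\bigr)$ as a real function on $(a,\infty)$. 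Next, I compute its Gevrey-$1$ asymptotic expansion at $+\infty$. By Remark \ref{stirling_rmk}(1), $\psi(w)\sim\hat\psi(w)$ Gevrey-$1$ as $w\to 0$, uniformly on closed subsectors of $\CC\setminus(-\infty,0]$. Setting $w = i/y$ with $y\to+\infty$, using $\psi(i/y) = \varphi(-iy)$ together with Remark \ref{stirling_rmk}(5) and the fact that $\hat\varphi$ is odd in $1/X$, we obtain $\varphi(-iy)\sim\hat\varphi(-iy) = -\hat\varphi(iy) = -iG(1/y)$; taking imaginary parts and applying Remark \ref{stirling_rmk}(4) gives $\Im\varphi(iy) \sim G(1/y)$ in the Gevrey-$1$ sense.

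Now I show $G$ is not $1$-summable in direction $0$. From $\hat\varphi(iX) = iG(1/X)$ and $\hat\psi(W) = \hat\varphi(1/W)$ one extracts $G(X) = i\hat\psi(iX)$, whence the formal Borel transforms satisfy $\mathcal{B}G(\zeta) = -\mathcal{B}\hat\psi(i\zeta)$. Using the closed form
$$\mathcal{B}\hat\psi(\zeta) = \frac{1}{\zeta(e^\zeta-1)} - \frac{1}{\zeta^2} + \frac{1}{2\zeta},$$
we see that $\mathcal{B}\hat\psi$ has simple poles precisely at $\zeta = 2\pi i k$, $k\in\ZZ\setminus\{0\}$; hence $\mathcal{B}G$ has simple poles on the real axis at $\zeta = \pm 2\pi k$. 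In particular the positive real ray meets the singularity $\zeta = 2\pi$, so $G$ is not $1$-summable in direction $0$, i.e.\ $G \notin \G_\tau$ for any admissible $\tau = (\{1\},R,r,\alpha)$.

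The contradiction now comes from a rigidity statement for the Hardy field of $(\RR_\G,\exp)$: every definable unary germ $f$ at $+\infty$ with a Gevrey-$1$ asymptotic expansion $\sum_n a_n y^{-n}$ should admit a decomposition $f(y) = g(1/y) + h(y)$, where $g\in\G_\tau$ for some admissible $\tau$ and $h(y)$ decays faster than any negative power of $y$; in particular, the formal series $\sum_n a_n X^n$ must lie in some $\G_\tau$. Applied to $f(y) = \Im\varphi(iy)$ with asymptotic series $G$, this contradicts the non-summability established above. The main obstacle is this last step: the required preparation/Hardy-field statement for $(\RR_\G,\exp)$ is not explicitly isolated in the paper, and one must extract it from the structure theory developed in \cite{RealO-minGamma} together with the quasi-analyticity of the class $\G$.
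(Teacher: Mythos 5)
Your reduction to $\Im\varphi(iy)$, the identification of its asymptotic series with $G$, and the Borel-transform computation (poles of $\mathcal{B}\hat\psi$ at $2\pi i k$, hence poles of $\mathcal{B}G$ at $\pm 2\pi k$ on the real axis, so $G$ is not $1$-summable in direction $0$) are all correct, and this is genuinely a different route from the paper's. But the final step has a real gap, and it is not only the one you flag. The "rigidity" statement you need is in fact available as a citation — it is \cite[Corollary 10.10]{RealO-minGamma}, which the paper invokes — so that part is not the true obstacle. The problem is what that corollary gives you: definability of a germ with asymptotic expansion $G$ forces $G$ to be $K$-summable in the positive real direction for \emph{some} finite $K \subseteq (0,\infty)$, not $1$-summable. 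Your Borel-pole argument only excludes the level set $K=\{1\}$ (equivalently, $\tau$ of the form $(\{1\},R,r,\alpha)$), so no contradiction follows as written. To close the argument along your route you would need a Tauberian-type statement ruling out $K$-multisummability of $G$ in direction $0$ for every finite $K$ — e.g.\ that an exactly Gevrey-$1$ series whose Borel transform has singularities on the ray cannot be multisummable in that direction for any level set — which is nontrivial and which you neither prove nor cite.

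The paper sidesteps exactly this difficulty. From definability on $i(a,\infty)$ it gets (via the reflection $\psi(z)=\overline{\psi(\bar z)}$, your Remark 4) definability on both segments, hence $K_1$-summability of $\hat\psi$ in direction $\tfrac\pi2$ and $K_2$-summability in direction $-\tfrac\pi2$; combining this with the known $1$-summability of $\hat\psi$ in all directions other than $\pm\tfrac\pi2$ (Remarks 1 and 2), it concludes that $\hat\psi$ is $K$-summable in \emph{every} direction for $K=\{1\}\cup K_1\cup K_2$, and then quotes \cite[Prop.~13]{Balser:2000fk} (summability in all directions implies convergence) to contradict the divergence of $\hat\psi$. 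This needs no Borel-transform computation and, crucially, no claim of non-summability in any particular direction, which is precisely the claim your version cannot yet justify at the multisummable level. If you want to keep your more explicit approach, you must either supply the missing Tauberian argument or restructure the endgame along the paper's "summable in all directions, hence convergent" lines.
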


\begin{proof}
	 Assume for a contradiction that $a>0$ and the restriction of $\psi$ to $i(0,a)$ is definable in $(\RR_{\G},\exp)$.  
	 	 First, the function $f:(0,a) \into \RR$ defined by 
	 $$f(x):= \Im\psi(ix)$$
	 is then definable in $(\RR_{\G},\exp)$ as well.  Since $\psi$ has asymptotic expansion $\hat\psi$ at 0, the function $\Im\psi$ has asymptotic expansion $\Im\hat\psi$ at 0; hence $f$ has asymptotic expansion $G$ at 0 (as defined in Remark \ref{stirling_rmk}(5)).  It follows from \cite[Corollary 10.10]{RealO-minGamma} that $G$ is $K_1$-summable in the positive real direction, for some finite $K_1 \subseteq (0,\infty)$, and hence that $\hat\psi$ is $K_1$-summable in the direction $\frac\pi2$.
	 
	 Second, by Remark \ref{stirling_rmk}(4), the restriction of $\psi$ to $-i(0,a)$ is definable in $(\RR_{\G},\exp)$ as well.  Therefore, an argument analogous to the above implies that $\hat\psi$ is $K_2$-summable in the direction $-\frac\pi2$, for some finite $K_2 \subseteq (0,\infty)$.  
	 
	 It follows from the above two points that $\hat\psi$ is $K$-summable in every direction (mod $2\pi$), where $K = \{1\} \cup K_1 \cup K_2$.  By \cite[Prop. 13]{Balser:2000fk}, it follows that $\hat\psi$ is convergent, a contradiction.
\end{proof}

Finally, we discuss (non-)definability of $\varphi$ in the left half-plane.
For the next lemma, we define $L:D(1) \into \CC$ by $$L(w):= \log(1-w).$$  Note that $L$ is the sum of a convergent power series with radius of convergence 1, so by Example \ref{dfbl_exs}(1), for each $\delta \in (0,1)$, the restriction of $L$ to $D(\delta)$ is definable in $\RR_{\an}$. Its compositional inverse $E:L(D(1)) \into \CC$ is given by $$E(u) = -\left(e^u-1\right).$$

\begin{lemma} \label{curve_dfbl}
	Let $\gamma:(0,\infty) \into \CC^-$ be a curve such that $\,\lim_{t \to 0} \Re\gamma(t) = -\infty$ and $\,\epsilon:= \liminf_{t \to 0} |\Im\gamma(t)| > 0$, where $$\CC^- := \{z \in \CC:\ \Re z < 0, \Im z \ne 0\}.$$  Let $C:= \gamma((0,\infty))$ be its image, and let $\R$ be any o-minimal expansion of the real field in which the restriction of $L$ to $D(e^{-\pi\epsilon})$ is definable.  Then at most one of $\varphi\rest{C}$ or $\varphi\rest{-C}$ is definable in $\R$.
\end{lemma}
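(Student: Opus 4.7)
The plan is a proof by contradiction: assume that both $\varphi\rest{C}$ and $\varphi\rest{-C}$ are definable in $\R$. Since the graphs are definable, so are the sets $C$ and $-C$. I would first use o-minimal cell decomposition and the Monotonicity Theorem to extract a \emph{definable} subcurve of $C$: there must exist a $1$-dimensional cell $C_0 \subseteq C$ on which $\Re$ is unbounded below, and reparametrising by $s := -\Re(z) \in (s_0,\infty)$ yields a definable continuous map
\[
\tilde\gamma(s) \;=\; -s + ih(s), \qquad s \in (s_0,\infty),
\]
with $h$ definable and $h(s) \neq 0$. Applying the Monotonicity Theorem to $h$ and enlarging $s_0$ if necessary, I may assume $h$ is monotonic of constant sign and satisfies $|h(s)| > \epsilon/2$ throughout (matching the $\epsilon$ in the hypothesis). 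Suppose without loss of generality $h(s) < 0$; the case $h > 0$ follows identically after invoking Remark~\ref{stirling_rmk}(4) to conjugate the identity below.

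The key identity comes from combining the two formulas in Remark~\ref{stirling_rmk}(3): for any $z$ with $\Im z < 0$,
\[
\varphi(z) + \varphi(-z) \;=\; \varphi(z) - \varphi_2(z) \;=\; -\log\bigl(1 - e^{-2\pi i z}\bigr) \;=\; -L\bigl(e^{-2\pi i z}\bigr).
\]
Evaluating at $z = \tilde\gamma(s)$, the left-hand side is definable in $s$ (since $\tilde\gamma(s) \in C$ and $-\tilde\gamma(s) \in -C$), so the map $s \mapsto L\bigl(e^{-2\pi i\tilde\gamma(s)}\bigr)$ is definable in $\R$. A direct computation gives $e^{-2\pi i\tilde\gamma(s)} = e^{2\pi h(s)}e^{2\pi i s}$, whence
\[
\bigl|e^{-2\pi i\tilde\gamma(s)}\bigr| \;=\; e^{2\pi h(s)} \;<\; e^{-\pi\epsilon},
\]
so $e^{-2\pi i\tilde\gamma(s)} \in D(e^{-\pi\epsilon})$. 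Since $L$ is a holomorphic bijection of $D(1)$ onto $L(D(1))$, its inverse $E(u) = 1-e^u$ is likewise definable on $L\bigl(D(e^{-\pi\epsilon})\bigr)$ (the inverse of a definable bijection is definable). Composing with $E$ recovers the definability in $\R$ of $s \mapsto e^{-2\pi i\tilde\gamma(s)}$.

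This yields a contradiction via the Monotonicity Theorem. Taking imaginary parts, the function
\[
s \;\longmapsto\; e^{2\pi h(s)}\sin(2\pi s)
\]
is definable on $(s_0,\infty)$; yet it has a zero at every $s$ of the form $k/2$ with $k \in \ZZ$, $k > 2s_0$, while vanishing on no open subinterval (since $e^{2\pi h(s)} > 0$ and the zero set of $\sin(2\pi s)$ is discrete). This contradicts the Monotonicity Theorem for $\R$, so at most one of $\varphi\rest{C}$ and $\varphi\rest{-C}$ can be definable in $\R$.

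The main obstacle I anticipate lies in the reduction of the first paragraph: the original $\gamma$ need not be definable, so the definable parametrisation $\tilde\gamma$ must be extracted from the definability of $C$ alone, and the limit behaviour of $h$ at infinity must be compatible with the disk $D(e^{-\pi\epsilon})$ appearing in the hypothesis (in particular, the degenerate case $h(s) \to 0$ has to be excluded by the implicit condition linking $\epsilon$ to $\gamma$). Once this careful reduction is in place, the functional identity of Remark~\ref{stirling_rmk}(3), the inversion of $L$, and the final oscillation argument proceed essentially mechanically.
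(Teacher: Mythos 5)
Your proposal follows the paper's own route: the same combination of Remark \ref{stirling_rmk}(3)--(4) giving $\varphi(z)+\varphi(-z)=-L\bigl(e^{-2\pi iz}\bigr)$ on the lower half of $C$, the same use of the definability of $L\rest{D(e^{-\pi\epsilon})}$ and of its inverse $E$ to conclude that $z\mapsto e^{-2\pi iz}$ is definable on an unbounded-to-the-left definable piece of $C$, and the same oscillation contradiction with o-minimality (which you in fact carry out explicitly, whereas the paper leaves it to the reader). Working with a definable reparametrisation $\tilde\gamma(s)=-s+ih(s)$ obtained from cell decomposition, rather than with the set $C$ itself, is a harmless cosmetic difference.

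The one step that is not justified as written is the claim that, after enlarging $s_0$, the Monotonicity Theorem gives $|h(s)|>\epsilon/2$ throughout. Monotonicity is compatible with $h(s)\to 0$, and an arbitrary one-dimensional cell of $C$ on which $\Re$ is unbounded below may well be a branch along which $\Im\to 0$: the hypothesis $\epsilon=\liminf_{t\to 0}|\Im\gamma(t)|>0$ only controls the $t\to 0$ end of $\gamma$, while the other end may also drift to $\Re\to-\infty$ with imaginary part tending to $0$ inside the same component of $\CC^-$. You flag this as the ``main obstacle'' but do not close it, and it cannot be excluded for an arbitrarily chosen cell. The repair is simple and is exactly what the paper's phrase ``after shrinking $C$'' amounts to: before extracting the cell, replace $C$ by the definable set $C\cap\{z:\Im z\le-\epsilon/2\}$ (or its mirror $C\cap\{z:\Im z\ge\epsilon/2\}$, reduced to the former via Remark \ref{stirling_rmk}(4)). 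Because $\liminf_{t\to 0}|\Im\gamma(t)|=\epsilon$ and $\Re\gamma(t)\to-\infty$ as $t\to 0$, this definable subset still contains points of arbitrarily negative real part, and on it the bound $\bigl|e^{-2\pi iz}\bigr|=e^{2\pi\Im z}\le e^{-\pi\epsilon}$ holds by fiat, so the composition with $L\rest{D(e^{-\pi\epsilon})}$ and the rest of your argument go through unchanged.
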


\begin{proof}
	Assume that both $\varphi\rest{C}$ and $\varphi\rest{-C}$ are definable in $\R$ (simply called ``definable'' in this proof); in particular, $C$ is definable, and we may assume that $C$ is connected.  Then either $\liminf_{t \to 0} \Im\gamma(t) = \epsilon$, or $\limsup_{t \to \infty} \Im\gamma(t) = -\epsilon$; by Remark \ref{stirling_rmk}(4), we may assume the latter.  After shrinking $C$ again if necessary, we may then assume that $\left|e^{-2\pi iz}\right| \le e^{-\pi \epsilon} < 1$ for $z \in C$.  
	Therefore, by Remark \eqref{stirling_rmk}(3), the function $f:C \into \CC$ defined by 
	$$f(z) := -L\left(e^{-2\pi iz}\right) = \varphi(z) + \varphi(-z)$$
	is definable.  Since the restriction of $E$ to $L(D(e^{-\pi\epsilon}))$ is also definable, it follows that the function $g:C \into \CC$ defined by $$g(z):= e^{-2\pi iz}$$ is definable.  We leave it to the reader to verify that this contradicts the o-minimality of $\R$.
\end{proof}

\begin{cor} \label{not_dfbl}
	Let $\gamma:(0,\infty) \into \CC^-$ be a curve such that $\,\lim_{t \to 0} \Re\gamma(t) = -\infty$ and $\,\epsilon:= \liminf_{t \to 0} |\Im\gamma(t)| > 0$, where $$\CC^- := \{z \in \CC:\ \Re z < 0, \Im z \ne 0\}.$$  Let $C:= \gamma((0,\infty))$ be its image, and assume also that $C \subseteq \{z \in \CC:|\arg z| > \frac\pi2+\delta\}$ for some $\delta > 0$.  Then the restriction of $\varphi$ to $C$ is not definable in any o-minimal expansion of $\RR_{\G}$.
\end{cor}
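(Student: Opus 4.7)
The plan is to derive a contradiction via Lemma \ref{curve_dfbl}: under the assumption that $\varphi\rest{C}$ is definable in some o-minimal expansion $\R$ of $\RR_{\G}$, I will exhibit a definable sub-arc $\Omega \subseteq C$ on which both $\varphi\rest{\Omega}$ and $\varphi\rest{-\Omega}$ are definable in $\R$, contradicting the lemma. The angular constraint $C \subseteq \{|\arg z| > \pi/2 + \delta\}$ is essential, since it forces $-\Omega$ into a sector around the positive real axis where Corollary \ref{stirling_cor} immediately yields definability of $\varphi$.

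To extract $\Omega$: since $\varphi\rest{C}$ is definable in $\R$, so is $C$. The definable set
\[
S := \{z \in C : |\Im z| > \epsilon/2\}
\]
contains $\gamma(t)$ for all sufficiently small $t>0$ (by $\liminf_{t \to 0}|\Im \gamma(t)| = \epsilon$), and therefore satisfies $\inf\{\Re z : z \in S\} = -\infty$ (by $\Re \gamma(t) \to -\infty$). Applying definable curve selection to $S$ at infinity in the direction of $-\Re$, I obtain a definable continuous map $\eta:(0,\sigma) \to S$ with $\lim_{s \to 0} \Re \eta(s) = -\infty$. After shrinking $\sigma$ if necessary, $|\eta(s)| > R$ holds for all $s \in (0,\sigma)$, for any prescribed $R>0$; this is automatic from $|\eta(s)| \ge |\Re \eta(s)|$.

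Setting $\Omega := \eta((0,\sigma))$, the function $\varphi\rest{\Omega}$ is definable in $\R$ as the restriction of $\varphi\rest{C}$. The angular hypothesis then forces $-\Omega \subseteq \{z \in \CC : |z|>R,\ |\arg z| < \pi/2-\delta\} = S^\infty(R,\pi/2-\delta)$, so by Corollary \ref{stirling_cor}, $\varphi\rest{-\Omega}$ is definable in $\RR_{\G}$, hence in $\R$. The hypothesis on $L$ required by Lemma \ref{curve_dfbl} holds automatically: by Example \ref{dfbl_exs}(1), the restriction of $L$ to any disk of radius strictly less than $1$ is definable in $\RR_{\an}$, and $\RR_{\an}$ is a reduct of $\RR_{\G} \subseteq \R$. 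Lemma \ref{curve_dfbl} applied to $\eta$ then gives that at most one of $\varphi\rest{\Omega}$ and $\varphi\rest{-\Omega}$ is definable in $\R$, contradicting that both are.

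The main technical step is this curve-selection: the definable set $C$ need not be one-dimensional nor nicely parametrized, yet the argument requires a definable continuous sub-arc of $C$ along which $\Re \to -\infty$ and $|\Im|$ stays bounded away from zero simultaneously. Carving $S$ out of $C$ and invoking definable curve selection at infinity handles this; the remainder is a formal combination of Corollary \ref{stirling_cor} and Lemma \ref{curve_dfbl}.
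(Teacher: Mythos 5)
Your proposal is correct and takes essentially the same route as the paper: combine Corollary \ref{stirling_cor} (giving definability of $\varphi$ on the reflected set, which lies in a sector $\{|\arg z|<\frac\pi2-\delta\}$ about the positive real axis) with Lemma \ref{curve_dfbl} to get the contradiction. The paper simply applies the lemma directly to $\gamma$ and $C$ (the lemma does not require the curve to be definable), so your definable curve-selection step is extra scaffolding; it is harmless, and it even tidies the point that $-C$ itself need not lie in a single $S^\infty(R,\frac\pi2-\delta)$ at intermediate parameter values.
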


\begin{proof}
	Since $-C \subseteq S^\infty(\infty,\frac\pi2-\delta)$, it follows from Corollary \ref{stirling_cor} that the restriction of $\varphi\rest{-C}$ is definable in $\RR_{\G}$.  So by Lemma \ref{curve_dfbl}, $\varphi\rest{C}$ is not definable in any o-minimal expansion of $\RR_{\G}$.
\end{proof}

\begin{rmk}
    The hypothesis that $\liminf_{t \to 0} |\Im\gamma(t)| > 0$ in Lemma \ref{curve_dfbl} and Corollary \ref{not_dfbl} can be dropped when working in an o-minimal structure in which the restriction of $L$ to $D(1)$ is definable. 
\end{rmk}

\section{The \texorpdfstring{$\Gamma$}{Gamma} function} \label{gamma_sec}

We begin this section by describing the sets on which the $\Gamma$ function is definable in $(\RR_{\mathcal{G}},\exp)$. Then we will describe certain regions on which $\Gamma$ cannot be definable in any o-minimal structure. Finally, we show with an example that $\Gamma$ is not the only solution of the difference equation
\[
    f(z+1) = zf(z)
\]
which is definable in $\RR_{\mathcal{G},\exp}$ on an unbounded complex domain.

\subsection{Defining the \texorpdfstring{$\Gamma$}{Gamma} function in \texorpdfstring{$(\RR_{\mathcal{G}},\exp)$}{RG,exp}}
Figure \ref{fig:Gamma} shows two visualizations of the $\Gamma$ function created using the same tool as for Figure \ref{fig:zeta}.

\begin{figure}
    \centering
    \fbox{\includegraphics[width=0.45\linewidth]{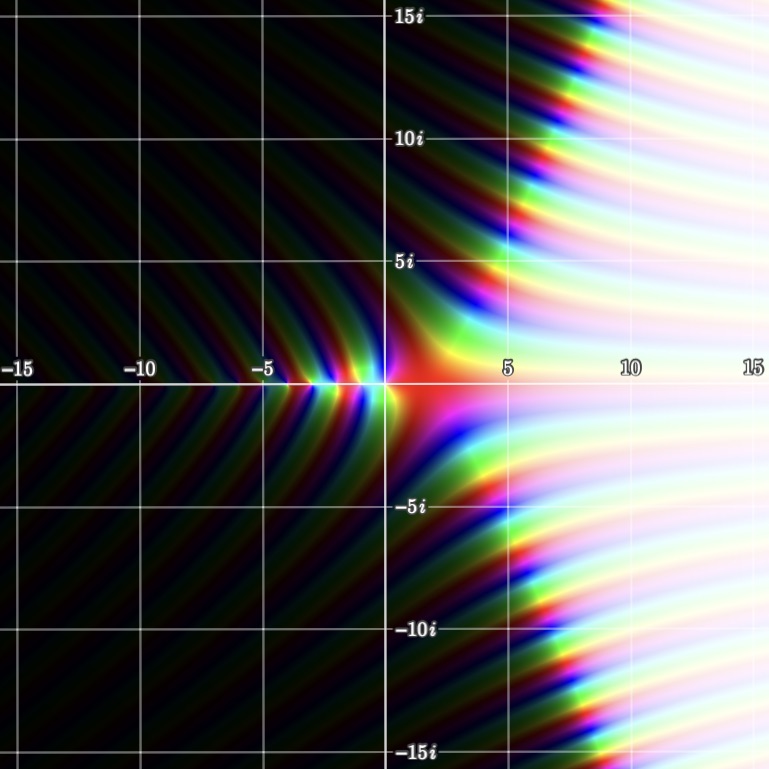}} \hspace{.02\linewidth}
    \fbox{\includegraphics[width=0.45\linewidth]{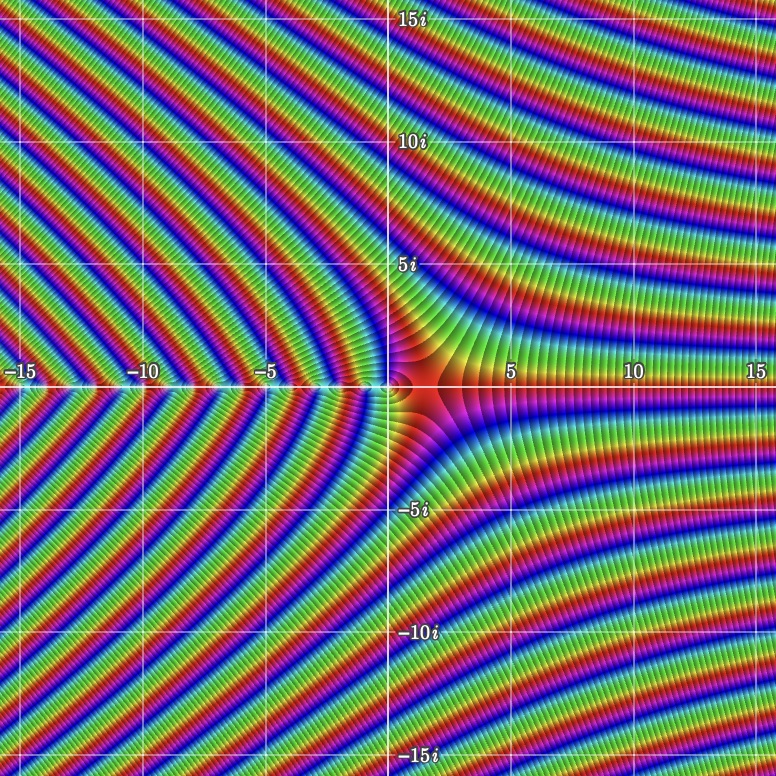}}
    \caption{Two styles of domain colorings for the $\Gamma$ function \cite{DomainColoring}.}
    \label{fig:Gamma}
\end{figure}

Recall from Example \ref{sterling_ex} that
\[
    \Gamma(z) = \sqrt{2\pi}z^{z-\frac{1}{2}}e^{-z}e^{\varphi(z)} = \sqrt{2\pi}e^{\left(z-\frac{1}{2}\right)\log z -z +\varphi(z)}
\]
for $z \in \CC\setminus (-\infty,0]$, where $\varphi(z)$ is the Stirling function.
By Corollary \ref{stirling_cor}, the restriction of $\varphi$ to $S^{\infty}(R,\alpha)$ is definable in $\RR_{\mathcal{G}}$ for any $R>0$ and $\alpha \in \left(0,\frac{\pi}{2}\right)$.
The real and imaginary parts of the complex exponential function are definable in $(\RR_{\mathcal{G}},\exp)$ on domains of the form
\[
    \mathcal{F}_n := \{z \in \CC : 2n\pi \le \Im z < 2(n+1)\pi\}
\]
for $n \in \ZZ$.
So $\Gamma$ restricted to any set of the form
\[
    \widetilde{U_n}(R,\alpha) := \left\{z \in S^{\infty}(R,\alpha) : 2\pi n \le \Im\left(\left(z-\frac{1}{2}\right)\log z - z + \varphi(z)\right) < 2\pi(n+1)\right\}
\]
for $n \in \ZZ$ is definable in $(\RR_{\mathcal{G}},\exp)$. We will write $\widetilde{U_n}$ instead of $\widetilde{U_n}(R,\alpha)$ when $R$ and $\alpha$ are clear from context.

Denote the unique positive real zero of $\Gamma'$ by $x_0 \approx 1.4616$ \cite{uchiyama}.
In Figure \ref{fig:Gamma}, the point $x_0$ is near the center of each image where three red strips meet.
Each set $\widetilde{U_n}(R,\alpha)$ is contained in a rainbow strip bounded between curves along the centers of adjacent red regions. See Figure \ref{fig:enter-label}, right.

\begin{figure}
    \centering
    \fbox{\includegraphics[width=0.45\linewidth]{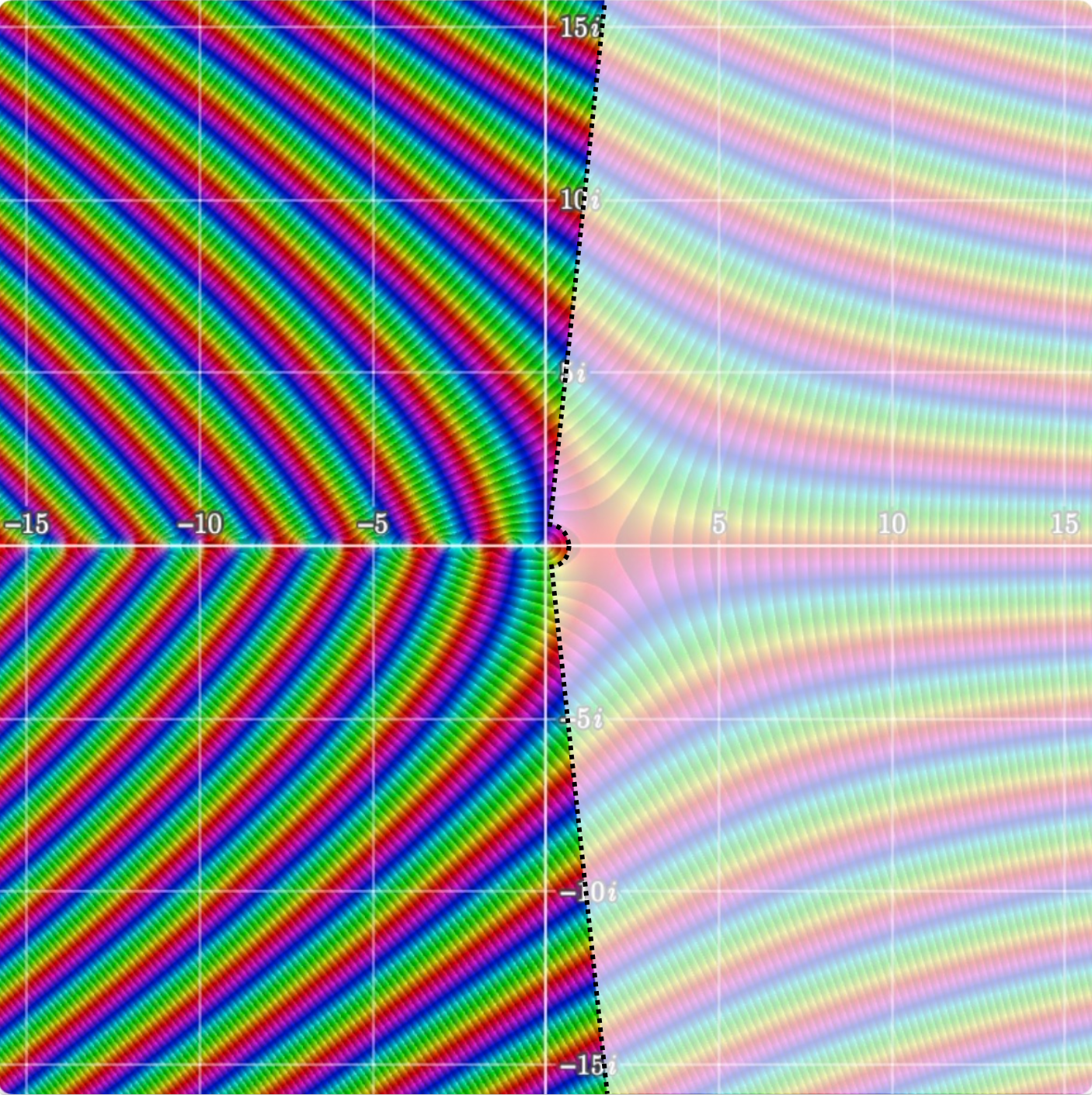}} \hspace{.02\linewidth}
    \fbox{\includegraphics[width=0.45\linewidth]{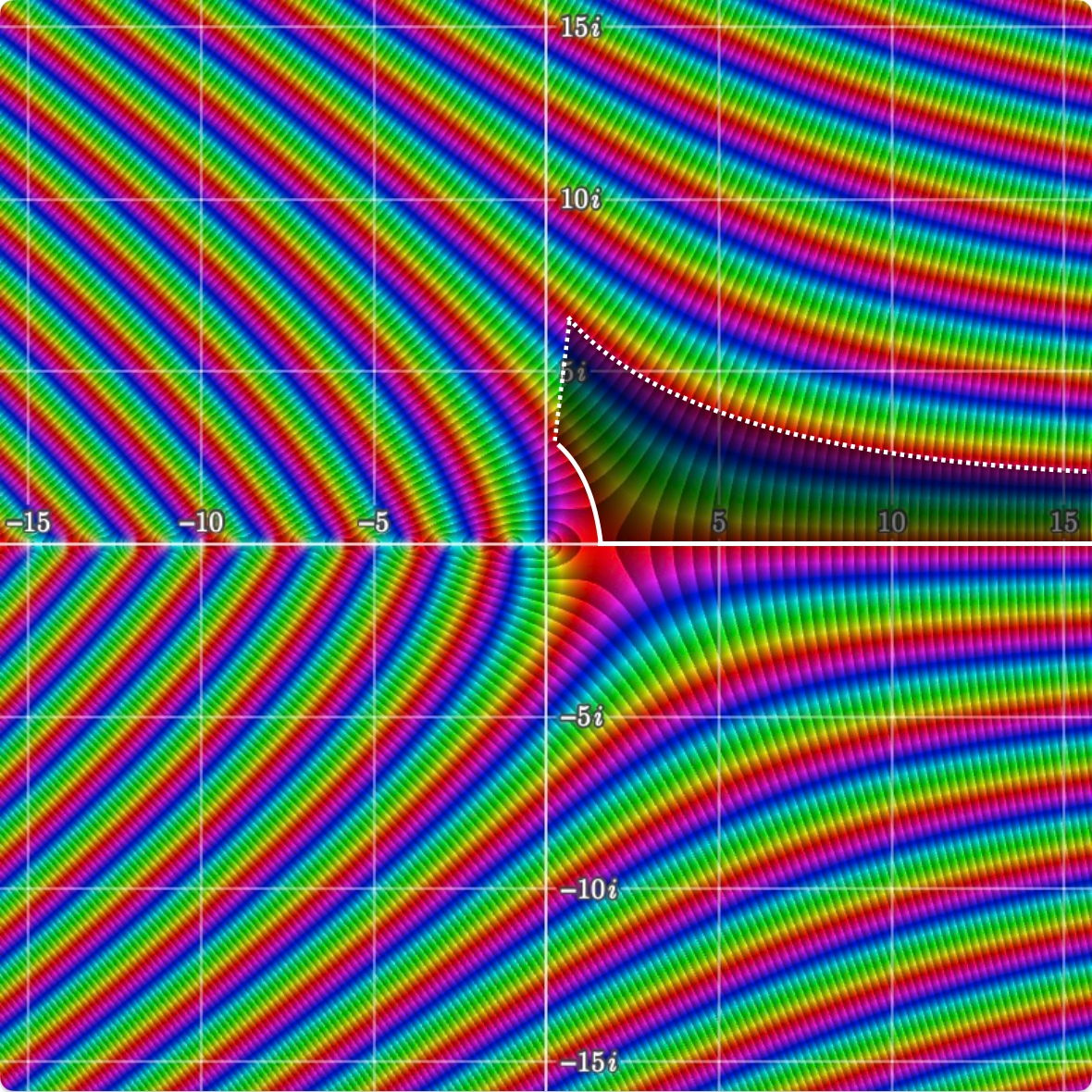}}
    \caption{The region $S^{\infty}\left(\frac{2}{3},\frac{14\pi}{30}\right)$ shaded white and $\widetilde{U_0}$ shaded black.}
    \label{fig:enter-label}
\end{figure}

It is more convenient to provide a qualitative description of the sets
\[
    U_n(R,\alpha) := \widetilde{U_n}(R,\alpha) \cap \{z : \Re z > x_0\}
\]
than to describe the sets $\widetilde{U_n}(R,\alpha)$. Notice that we do not lose much by doing this, as $\widetilde{U_n}(R,\alpha) \setminus U_n(R,\alpha)$ is always bounded and $U_n(R,\alpha) = \widetilde{U_n}(R,\alpha)$ for all but finitely many $n \in \ZZ$.
We will write $U_n$ instead of $U_n(R,\alpha)$ when $R$ and $\alpha$ are clear from context.

Fix $R>0$ and $0<\alpha<\frac{\pi}{2}$. 
In order to describe the sets $U_n$, we study the level curves of $\arg \Gamma$.
Let $A(z)$ be the imaginary part of the exponent of Stirling's formula:
\[
    A(z) := \Im\left(\left(z-\frac{1}{2}\right)\log z - z + \varphi(z)\right).
\]
Then $\arg \Gamma(z) = A(z) \mod 2\pi$.
We will describe the sets defined by $A(z) = \theta$ in the region $\{z : \Re z>x_0\}$ for $\theta \in \RR$.

Since $\Gamma$ is real on the positive real line, $\{z : A(z) = 0, \Re z>x_0\}$ contains the interval $(x_0,\infty)$.
We now recall some facts from \cite{GammaEC}.
Let $C_r := \{z : |\Gamma(z)| = r\}$ for $r \in (0,\infty)$.
\begin{fact}[Propositions 2.5 and 2.7 of \cite{GammaEC}]
\label{fact:fixmod}
    For each $r \in (0,\infty)$, there is a function $y_r(x)$ such that for all large enough $x>x_0$, $|\Gamma(x+iy_r(x))|=r$. The graph of this function is contained in $C_r$ and forms a single $C^1$ curve with positive slope and no horizontal or vertical asymptotes.
    Moreover, \[\frac{d}{dx}(A(x+iy_r(x))) \geq 2(\log(\lfloor x \rfloor)-1)^2. \]
\end{fact}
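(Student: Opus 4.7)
The plan is to apply the implicit function theorem to $F(x,y):=\log|\Gamma(x+iy)|-\log r$ and then to compute the derivative of $A$ along the resulting level curve by logarithmic differentiation of $\Gamma$.

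On the right half-plane, $\log\Gamma$ has a single-valued holomorphic branch whose derivative is the digamma function $\psi$, and the Cauchy--Riemann equations give $\partial_x F=\Re\psi(z)$ and $\partial_y F=-\Im\psi(z)$. The crucial positivity input is $\Im\psi(x+iy)>0$ for $x>x_0$ and $y>0$: Stirling's asymptotic
\[
\psi(z)=\log z-\tfrac{1}{2z}+O(|z|^{-2})
\]
yields $\Im\psi(z)\sim\arctan(y/x)>0$ on any exterior region, and the bounded region is dispatched via the recurrence $\psi(z+1)=\psi(z)+1/z$ together with Binet's integral representation. Consequently $y\mapsto|\Gamma(x+iy)|$ is strictly decreasing on $(0,\infty)$ for each $x>x_0$, with value $\Gamma(x)$ at $y=0$ and limit $0$ at infinity; hence $|\Gamma(x+iy)|=r$ has a unique nonnegative solution $y_r(x)$ on its natural domain, and the implicit function theorem gives $y_r\in C^1$ with
\[
y_r'(x)=\frac{\Re\psi(x+iy_r(x))}{\Im\psi(x+iy_r(x))}>0.
\]
No horizontal asymptote exists because $y_r'(x)\to\infty$ as $x\to\infty$ (numerator $\to\infty$, denominator bounded above by $\pi/2$), and no vertical asymptote because $y_r'$ is locally bounded.

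For the derivative of $A$, I would parametrize the level curve as $\Gamma(x+iy_r(x))=r\,e^{iA(x+iy_r(x))}$; logarithmic differentiation with $z=x+iy_r(x)$ yields $\psi(z)(1+iy_r'(x))=i\,\tfrac{d}{dx}A(z)$. Taking real parts recovers the slope formula, and taking imaginary parts collapses everything to the clean identity
\[
\frac{d}{dx}A(x+iy_r(x))=\frac{|\psi(z)|^2}{\Im\psi(z)}.
\]
For the quantitative lower bound I would combine $\Im\psi\leq \pi/2$ in the denominator with a lower bound $\Re\psi(z)\geq \log\lfloor x\rfloor-1+O(1)$ in the numerator, the latter obtained by iterating $\psi(z)=\psi(z-n+1)+\sum_{k=1}^{n-1}\tfrac{1}{z-k}$ with $n=\lfloor x\rfloor$: the real parts of the fractions telescope into a harmonic-like tail of size $\log\lfloor x\rfloor$, while $\psi$ evaluated on the unit strip $\Re w\in[1,2)$ is $O(1)$ by Binet.

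The hardest step is producing the stated constant in the lower bound. A purely asymptotic Stirling estimate gives only $\tfrac{2(\log x)^2}{\pi}$, so matching $2(\log\lfloor x\rfloor-1)^2$ requires the telescoping argument above to sharpen $\Re\psi$ beyond the naive $\log x$, together with a careful rather than crude bound on $\Im\psi$ that exploits the asymptotic location $y_r(x)\sim(2/\pi)\,x\log x$ of the level curve to place $\arg z$ strictly below $\pi/2$. Keeping track of the $\log\log x$ correction in $\log|z|$ and of the correct decay rate of $\pi/2-\arg z$ along the curve is what should close the numerical gap.
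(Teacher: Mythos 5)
The paper does not actually prove this statement: it is imported verbatim as a Fact from Propositions 2.5 and 2.7 of \cite{GammaEC}, so there is no internal proof to compare yours against. Judged on its own, the qualitative half of your argument (implicit function theorem for $\log|\Gamma|-\log r$, the Cauchy--Riemann identities $\partial_x\log|\Gamma|=\Re\psi$, $\partial_y\log|\Gamma|=-\Im\psi$, strict monotonicity of $y\mapsto|\Gamma(x+iy)|$ from $\Im\psi>0$, and the slope formula $y_r'=\Re\psi/\Im\psi$) is sound and in the same spirit as what the paper does prove later for the left half-plane (Lemmas \ref{ModIncrXDecrY} and \ref{FixArgCurves}). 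Two small repairs: positivity of the slope needs $\Re\psi>0$, which is exactly where $x>x_0$ enters (use $\Re\psi(x+iy)\ge\psi(x)>\psi(x_0)=0$, monotone in $y^2$ from the partial-fraction series); and your exclusion of vertical asymptotes via ``$y_r'$ is locally bounded'' is circular as stated --- argue instead that $|\Gamma(x+iy)|\to0$ uniformly for $x$ in compacta as $y\to\infty$, so the level set cannot escape to infinity over a finite $x$. Also note that for $r>\Gamma(x_0)$ there is no solution at all when $x_0<x<\Gamma^{-1}(r)$, since $|\Gamma(x+iy)|\le\Gamma(x)$; your hedge ``on its natural domain'' silently weakens the quoted statement.

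The genuine gap is the quantitative bound, and your proposed fix points in the wrong direction. Your identity $\tfrac{d}{dx}A(x+iy_r(x))=|\psi(z)|^2/\Im\psi(z)$ is correct, and with $\Im\psi\le\pi/2$ (valid for $x\ge1$) and $\Re\psi\ge\log\lfloor x\rfloor-1$ it gives $\tfrac{2}{\pi}(\log\lfloor x\rfloor-1)^2$, as you say. But the level curve satisfies $y_r(x)\sim\tfrac{2}{\pi}x\log x$, so along it $\arg z\to\pi/2$ and hence $\Im\psi\to\pi/2$: the denominator tends to its worst-case value rather than being pushed ``strictly below $\pi/2$'' in any exploitable way, while the gain in the numerator from $\log|z|=\log x+\log\log x+O(1)$ is only lower order. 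Thus your own identity forces $\tfrac{d}{dx}A(x+iy_r(x))=\tfrac{2}{\pi}(\log x)^2(1+o(1))$, which is eventually \emph{smaller} than $2(\log\lfloor x\rfloor-1)^2$; the factor-of-$\pi$ gap cannot be closed by the telescoping/decay-rate bookkeeping you describe, and the constant $2$ is not reachable along this route (indeed this computation suggests the constant in the quoted Fact should be checked against \cite{GammaEC}, e.g.\ for a missing $1/\pi$ or a different normalization of the derivative). For what the paper actually uses --- that $A$ strictly increases along $C_r$ and that these curves have no horizontal asymptote --- your argument with the $\tfrac{2}{\pi}(\log\lfloor x\rfloor-1)^2$ bound suffices, but as a proof of the stated inequality it does not close.
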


\begin{fact}[Proposition 2.11 of \cite{GammaEC}]
\label{fact:fixarg}
    For each $\theta \in (-\pi,\pi]$, the set 
    \[
        \{z : \Re z>x_0, \Im z>0, \arg\Gamma(z) = \theta\}
    \]
    is a  collection of disjoint $C^1$ curves, each of which is the graph of a function $y_\theta(x)$ whose slope is negative and approaches zero as $x \to +\infty$.
\end{fact}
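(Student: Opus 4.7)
The plan is to apply the implicit function theorem to the equation $A(x+iy) = \theta$ on the open set $D := \{z : \Re z > x_0,\ \Im z > 0\}$, where $A$ is regarded as a single-valued lift of $\arg\Gamma$ along the standard sheet. Setting $F(z) := (z - \tfrac{1}{2})\log z - z + \varphi(z)$ so that $A = \Im F$ on $D$, the Cauchy--Riemann equations give
\[
\frac{\partial A}{\partial x}(z) = \Im F'(z), \qquad \frac{\partial A}{\partial y}(z) = \Re F'(z),
\]
where $F'(z) = \log z - \tfrac{1}{2z} + \varphi'(z)$.

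The two key estimates to establish on $D$ are (a) $\Re F'(z) > 0$, with $\Re F'(z) \to +\infty$ as $|z| \to \infty$, and (b) $\Im F'(z) > 0$, with $\Im F'(z)$ uniformly bounded above. For (a), writing $\Re F'(z) = \log|z| - \Re\bigl(\tfrac{1}{2z}\bigr) + \Re\varphi'(z)$, the leading term $\log|z|$ dominates: term-by-term differentiation of the asymptotic expansion $\hat\varphi$ gives $\varphi'(z) = O(|z|^{-2})$ via Cauchy estimates on the sector of summability, while $|1/(2z)| = O(|z|^{-1})$, so both corrections are eventually dwarfed by $\log|z|$. For (b), I would use $\Im F'(z) = \arg z + \tfrac{y}{2|z|^{2}} + \Im\varphi'(z)$, whose leading term $\arg z$ lies in $(0,\pi/2)$, giving both positivity and the uniform upper bound. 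Given (a), the implicit function theorem locally expresses $\{A = \theta\} \cap D$ as the graph of a $C^{1}$ function $y_\theta(x)$; combined with (b), the slope
\[
y_\theta'(x) = -\frac{\Im F'(x + iy_\theta(x))}{\Re F'(x + iy_\theta(x))}
\]
is negative, and $|y_\theta'(x)| \le C/\log x \to 0$ as $x \to +\infty$, which is the quantitative conclusion.

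To promote these local graphs to graphs on intervals of the form $(a,\infty)$, I would run a standard continuation argument: $\partial A/\partial y > 0$ rules out vertical tangents; negative slope together with $y > 0$ keeps the graph inside $D$ as $x$ grows; and by monotonicity of $A$ in $y$, a graph cannot meet $\Im z = 0$ at finite $x$ except in the case $\theta = 0$, which is covered by the real half-line $(x_0,\infty)$. Disjointness of distinct components is automatic from the implicit function theorem. The main obstacle I expect is verifying (a) uniformly down to the boundary $\Re z = x_0$: for $|z|$ only moderately larger than $x_0$ the asymptotic bounds on $\varphi'$ are too weak to dominate the correction terms, and I would appeal to Fact \ref{fact:fixmod}, whose lower bound on $\tfrac{d}{dx}A(x+iy_r(x))$ is precisely a quantitative positivity statement for $\nabla A$ along modulus level curves. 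Combined with the positivity of $\Re F'$ on the real axis $(x_0,\infty)$ (which reduces to standard monotonicity of the digamma function), this should propagate (a) throughout $D$ by a connectedness argument.
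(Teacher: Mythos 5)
First, a structural point: the paper does not prove this statement at all --- it is imported as a Fact, quoted from Proposition 2.11 of \cite{GammaEC} --- so there is no in-paper proof to compare yours against. The closest in-paper analogue is Lemma \ref{FixArgCurves} (together with Lemma \ref{ModIncrXDecrY}), which treats the region $\Im z>2$ by computing the partial derivatives of $|\Gamma|$ from the Weierstrass product and then using conformality of $\Gamma$ (nonvanishing of $\Gamma'$) plus the implicit function theorem, rather than sign estimates on $F'=(\log\Gamma)'$. Your outline (Cauchy--Riemann for $A=\Im F$, implicit function theorem, continuation, slope $=-\Im F'/\Re F'$) is a perfectly viable route, and is essentially dual to that argument, since $F'$ is the digamma function $\Gamma'/\Gamma$.

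However, as written there is a genuine gap exactly where you flag it, and your proposed repair does not close it. Your sign conditions (a) $\Re F'>0$ and (b) $\Im F'>0$ are only justified via the asymptotic expansion of $\varphi'$, which says nothing for $|z|$ of moderate size; and Fact \ref{fact:fixmod} controls only the derivative of $A$ \emph{along} the modulus level curves, i.e.\ the combination $A_x+y_r'(x)A_y$, not $A_y=\Re F'$ itself, so positivity along those curves plus positivity on $(x_0,\infty)$ cannot be ``propagated by connectedness'': nothing in those two facts prevents the zero set of the harmonic function $\Re F'$ from lying off the real axis and off every curve $y_r$. The same problem occurs for (b) near the real axis, where $\arg z$, $y/(2|z|^2)$ and $\Im\varphi'(z)$ all tend to $0$, so ``the leading term lies in $(0,\pi/2)$'' does not yield the strict inequality $\Im F'>0$ that you need for strictly negative slope. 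The clean fix, in the same spirit as the paper's Lemma \ref{ModIncrXDecrY}, is to abandon the divergent expansion and use the exact series $F'(z)=-\gamma+\sum_{n\ge0}\bigl(\frac{1}{n+1}-\frac{1}{n+z}\bigr)$: then $\Im F'(x+iy)=\sum_{n\ge0}\frac{y}{(n+x)^2+y^2}>0$ for $y>0$, and $\Re F'(x+iy)=-\gamma+\sum_{n\ge0}\bigl(\frac{1}{n+1}-\frac{n+x}{(n+x)^2+y^2}\bigr)\ge F'(x)>0$ for $x>x_0$, since each summand is nondecreasing in $|y|$ and $F'$ is increasing on $(0,\infty)$ with $F'(x_0)=0$. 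With these exact inequalities, together with $\Re F'\sim\log|z|$ and the boundedness of $\Im F'$, your IFT-plus-continuation argument and the estimate $|y_\theta'(x)|\le C/\log x\to 0$ do go through.
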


Since $\Gamma$ is continuous, since $A(z) = 0$ along the positive real axis, and since $A(z)$ increases along the graph of any $y_r$ by Fact \ref{fact:fixmod}, we must have $A(z)>0$ on $\{z : \Re z > x_0, \Im z >0\}$.
Combining this with Fact \ref{fact:fixarg} shows that for each $\theta>0$,
\[
    A_{\theta} := \left\{z : \Re z>x_0, \Im z>0, A(z)= \theta\right\}
\]
is a curve in the upper right quadrant with negative slope that approaches zero as $\Re z \to \infty$, and if $\theta_1 \ne \theta_2$ then $A_{\theta_1} \cap A_{\theta_2} = \varnothing$.
Since $\Gamma(\overline{z}) = \overline{\Gamma(z)}$, the set
\[
    A_{-\theta} := \left\{z : \Re z>x_0, \Im z<0, A(z)= -\theta \right\}
\]
satisfies $A_{-\theta} = \overline{A_{\theta}}$. So for each $\theta>0$, $A_{-\theta}$ is a curve with positive slope in the lower right quadrant. Moreover, we have $A(z)<0$ on $\{z : \Re z > x_0, \Im z <0\}$. Thus $\{z : A(z) = 0, \Re z>x_0\} = (x_0,\infty)$. Altogether, we have shown the following:

\begin{cor}\label{gamma_dfbl}
    For any $n\in \ZZ$, $\Gamma|_{U_n}$ is definable in $\RR_{\mathcal{G},\exp}$, where $U_n$ is the region in $S^{\infty}(R,\alpha) \cap \{z : \Re z > x_0\}$ bounded between the curves $\{z : A(z) = 2\pi n\}$ and $\{z : A(z) = 2\pi(n+1)\}$. 
\end{cor}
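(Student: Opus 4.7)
My plan is to assemble the ingredients from the preceding discussion into a clean three-step argument based on Stirling's identity
\[
\Gamma(z) = \sqrt{2\pi}\,\exp(E(z)), \quad E(z) := \bigl(z-\tfrac12\bigr)\log z - z + \varphi(z),
\]
valid on $\CC \setminus (-\infty,0]$ and hence on $S^{\infty}(R,\alpha) \cap \{\Re z > x_0\}$.

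First I would check that the exponent $E$ itself is definable in $\RR_{\mathcal{G},\exp}$ on the full sector $S^{\infty}(R,\alpha)$. Corollary \ref{stirling_cor} gives definability of $\varphi|_{S^\infty(R,\alpha)}$ in $\RR_{\mathcal{G}}$; on this sector the principal logarithm decomposes as $\log z = \log|z| + i\arg z$ with $|\arg z|<\alpha<\pi/2$, and both real $\log$ (as the inverse of $\exp$) and $\arctan$ (semialgebraic) are definable in $\RR_{\mathcal{G},\exp}$, so $\log z$, and hence $E$, is definable there.

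Second I would note that although complex exponentiation is not globally definable, its restriction to each horizontal strip $\mathcal{F}_n$ is definable in $\RR_{\mathcal{G},\exp}$: indeed $\exp(w) = e^{\Re w}\bigl(\cos(\Im w) + i\sin(\Im w)\bigr)$, and the restrictions of $\sin$ and $\cos$ to any interval of length $2\pi$ are definable in $\RR_{\an}\subseteq\RR_{\mathcal{G}}$.

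Third I would finish by observing that $U_n$ is designed precisely so that $E$ maps it into $\mathcal{F}_n$: by the very definition of $A$ we have $A(z) = \Im E(z)$, and $U_n$ is bounded between the level sets $\{A = 2\pi n\}$ and $\{A = 2\pi(n+1)\}$, so $\Im E(z) \in [2\pi n,2\pi(n+1))$ on $U_n$. Consequently
\[
\Gamma|_{U_n} \;=\; \sqrt{2\pi}\cdot\bigl(\exp|_{\mathcal{F}_n}\bigr) \circ \bigl(E|_{U_n}\bigr)
\]
is a composition of maps definable in $\RR_{\mathcal{G},\exp}$, which proves the corollary.

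There is essentially no serious obstacle: all the substantive work has already been carried out in Section \ref{multi_sec} (via Corollary \ref{stirling_cor}) and in the paragraphs setting up $\widetilde U_n$ and $U_n$. The only point needing care is the identification $A(z) = \Im E(z)$, which makes $U_n$ precisely the preimage of the bounded strip $\mathcal{F}_n$ under the definable exponent $E$; this identification is exactly what allows the non-definable complex exponential to be replaced by its definable restriction to a single strip.
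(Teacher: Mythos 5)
Your proposal is correct and follows essentially the same route as the paper: the corollary is there stated as a summary of the preceding discussion, which likewise writes $\Gamma=\sqrt{2\pi}\,e^{(z-\frac12)\log z - z+\varphi(z)}$, invokes Corollary \ref{stirling_cor} for $\varphi$ on $S^\infty(R,\alpha)$, uses definability of $\exp$ on the strips $\mathcal{F}_n$, and concludes definability on $\widetilde{U_n}$, hence on $U_n=\widetilde{U_n}\cap\{\Re z>x_0\}$. The only thing you pass over lightly is that identifying ``the region bounded between the curves $\{A=2\pi n\}$ and $\{A=2\pi(n+1)\}$'' with the set $\{z:2\pi n\le A(z)<2\pi(n+1)\}$ rests on the level-curve analysis (Facts \ref{fact:fixmod} and \ref{fact:fixarg} and the description of the sets $A_{\pm\theta}$), not merely on the definitional identity $A(z)=\Im E(z)$; that analysis is the part of the paper's discussion your argument implicitly relies on.
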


\subsection{Non-definability results}

Next, we prove a non-definability result for $\Gamma$ which complements Proposition \ref{im_not_dfbl} for $\varphi$.

\begin{prop}\label{GammaNonDefinability}
    Let $0 < \epsilon < \frac{\pi}{2}$ and let $\gamma : (0,\infty) \to \{z \in \CC^\times : \epsilon < |\arg z| < \pi - \epsilon\}$ such that $\lim_{t \to \infty} |\Im \gamma(t)| = +\infty$.
    Let $C = \gamma((0,\infty))$.
    Then $\displaystyle \lim_{t \to \infty} |A(\gamma(t))| = \infty$ and $\Gamma|_C$ is not definable in any o-minimal structure.
\end{prop}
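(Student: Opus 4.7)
The plan is to treat the two assertions in turn. For the first, writing $z = re^{i\theta}$ with the principal branch of logarithm, a direct computation gives
\[
    \Im\!\bigl((z - \tfrac{1}{2})\log z - z\bigr) = r\sin\theta\,(\log r - 1) + r\theta\cos\theta - \tfrac{\theta}{2}.
\]
The Stirling function $\varphi$ has $\hat\varphi(X) = \sum_{k\ge 1}\frac{B_{2k}}{2k(2k-1)}X^{1-2k}$ as asymptotic expansion at $\infty$, with leading term of order $X^{-1}$, valid uniformly in any closed subsector of $\CC\setminus(-\infty,0]$; in particular $\Im\varphi(z) \to 0$ as $|z|\to\infty$ in such subsectors. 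For the given $\gamma$, the angle $\theta(t) = \arg\gamma(t)$ satisfies $|\sin\theta(t)| \ge \sin\epsilon > 0$, and $|\gamma(t)| \ge |\Im\gamma(t)| \to \infty$, so the term $r\sin\theta\log r$ dominates all others in absolute value. Hence $|A(\gamma(t))| \to \infty$.

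For the second assertion, I would argue by contradiction: suppose $\Gamma|_C$ is definable in some o-minimal expansion $\R$ of the real field. Then $C$ itself is definable in $\R$, and it is unbounded since $|\gamma(t)| \to \infty$. By o-minimal curve selection at infinity, there is a definable continuous map $\eta:(0,1)\to C$ with $|\eta(s)|\to\infty$ as $s\to 0^+$. Since $\eta$ takes values in the sector $\{\epsilon<|\arg z|<\pi-\epsilon\}$, the asymptotic estimate from the first part applies verbatim to $\eta$, giving $|A(\eta(s))|\to\infty$ as $s\to 0^+$.

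Now the functions $\Re\Gamma\circ\eta$, $\Im\Gamma\circ\eta$ and $|\Gamma\circ\eta|$ are definable in $\R$; the last is strictly positive because $\Gamma$ has no zeros and $C$ avoids the poles of $\Gamma$. Hence
\[
    h(s) := \frac{\Re\Gamma(\eta(s))}{|\Gamma(\eta(s))|} = \cos A(\eta(s))
\]
is definable on $(0,1)$, and by the Monotonicity Theorem it is eventually monotone as $s\to 0^+$. On the other hand, $A\circ\eta$ is continuous with $|A\circ\eta|\to\infty$, so either $\limsup_{s\to 0^+} A\circ\eta = +\infty$ or $\liminf_{s\to 0^+} A\circ\eta = -\infty$; in either case, the intermediate value theorem forces $h$ to take both values $+1$ and $-1$ at arbitrarily small $s$, contradicting eventual monotonicity.

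The main obstacle, which turns out to be mild, is the transfer of the asymptotic behavior of $A$ from the given (potentially non-definable) curve $\gamma$ to the definable replacement $\eta$ extracted from $C$. This works because the lower bound $|A(z)|\gtrsim |z|\log|z|$ holds uniformly on the sector $\{\epsilon<|\arg z|<\pi-\epsilon\}$, so \emph{any} escape to infinity in modulus within the sector forces $|A|$ to diverge, after which the standard o-minimal oscillation argument completes the proof.
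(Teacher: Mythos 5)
Your proposal is correct and follows essentially the same strategy as the paper: bound $|\Im\varphi|$ on the sector and show the elementary part of $A$ grows like $|z|\log|z|$ there, so $|A(\gamma(t))|\to\infty$, and then turn the resulting unbounded winding of $\arg\Gamma$ along the curve into a violation of o-minimality. Your finish via definable curve selection at infinity and the Monotonicity Theorem applied to $\cos(A\circ\eta)=\Re\Gamma/|\Gamma|$ is a slightly more careful implementation than the paper's (which argues with the set $\{t:\Re\Gamma(\gamma(t))=0\}$ even though $\gamma$ itself need not be definable, and should really be phrased in terms of $\{z\in C:\Re\Gamma(z)=0\}$), and the polar-coordinate estimate versus the paper's Cartesian one with $\arccot$ and the conjugation reduction is only a cosmetic difference.
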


\begin{proof}
    Assume for a contradiction that $\Gamma|_{C}$ is definable in some o-minimal expansion $\R$ of the real field.
    In \cite[Chapter 2, Section 4.2]{remmert}, an upper bound $B = B_{\epsilon,M}$ is given on $|\varphi(z)|$ for $|\arg z|<\pi-\epsilon$ and $|z|>M$.
    So writing $\gamma(t) = x_t + iy_t$, we have
    \begin{align*}
        \big|A(\gamma(t))\big| \ge \left|\left(x_t-\frac{1}{2}\right)\arccot\left(\frac{x_t}{y_t}\right)+y_t\left(\log \sqrt{x_t^2+y_t^2}-1\right)\right|-B.
    \end{align*}
    We will show $\lim_{t \to \infty} \left|\left(x_t-\frac{1}{2}\right)\arccot\left(\frac{x_t}{y_t}\right)+y_t\left(\log \sqrt{x_t^2+y_t^2}-1\right)\right| = \infty$, and therefore that $\lim_{t \to \infty} \big|A(\gamma(t))\big| = \infty$ as well. Since $A(z)$ is continuous on $C$ and $\arg \Gamma(\gamma(t)) = A(\gamma(t)) \mod 2\pi$, this would show that, for example, the definable set $\{t \in (0,\infty) : \Re \Gamma(\gamma(t))=0\}$ has infinitely many connected components, which contradicts the o-minimality of $\R$.
    
    We may assume $\lim_{t \to \infty}y_t = +\infty$, as $\Gamma|_C$ is definable if and only if $\Gamma(\overline{z})|_{\overline{C}} = \overline{\Gamma(z)}|_{\overline{C}}$ is definable in $\mathcal{R}$, which holds if and only if $\Gamma|_{\overline{C}}$ is definable in $\mathcal{R}$.
    Since $\arg \gamma(t) < \pi - \epsilon$, we have $y_t \ge |x_t|\tan (\epsilon)$. So 
    \begin{align*}
        \lim_{t \to \infty} &\left|\left(x_t-\frac{1}{2}\right)\arccot\left(\frac{x_t}{y_t}\right)+y_t\left(\log \sqrt{x_t^2+y_t^2}-1\right)\right| \\
            &\ge \lim_{t \to \infty} \left|\left(-|x_t|-\frac{1}{2}\right)(\pi - \epsilon)+|x_t|\tan (\epsilon)\left(\log \sqrt{x_t^2+\left(|x_t|\tan (\epsilon)\right)^2}-1\right)\right| \\
            &\ge \lim_{t \to \infty} \left|\left(-|x_t|-\frac{1}{2}\right)(\pi - \epsilon)+|x_t|\tan (\epsilon)\left(\log |x_t| + \log\sqrt{1+\tan^2(\epsilon)}-1\right)\right| \\
            &\ge \lim_{t \to \infty} |x_t|\left|-\frac{3}{2}(\pi - \epsilon)+\tan(\epsilon)\left(\log |x_t|+ \log\sqrt{1+\tan^2 (\epsilon)}-1\right)\right| \\
            &=\infty. \qedhere
    \end{align*}
\end{proof}

\begin{cor}\label{gamma_optimal}
    Let $X \subset \CC$ and suppose $\Gamma|_X$ is definable in an o-minimal expansion $\mathcal{R}$ of $\RR_{\mathcal{G},\exp}$. Then there must be some $R>0$, $0<\alpha<\frac{\pi}{2}$, and $n \in \NN$ such that 
    \[
        X \setminus \big(U_{-n}(R,\alpha) \cup \cdots \cup U_{n-1}(R,\alpha)\cup U_n(R,\alpha)\big)
    \]
    is bounded.
\end{cor}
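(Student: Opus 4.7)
I plan to argue by contradiction: assume $\Gamma|_X$ is definable in an o-minimal expansion $\R$ of $\RR_{\mathcal{G},\exp}$ and that $X\setminus\bigcup_{|k|\le n}U_k(R,\alpha)$ is unbounded for every choice of $R>0$, $\alpha\in(0,\pi/2)$, and $n\in\NN$. The argument has two main steps.

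\emph{Step 1 (confining unbounded parts of $X$ to a sector around the positive real axis).} By definable curve selection and the Monotonicity Theorem, every unbounded definable subset of $X$ contains the image of a definable curve $\gamma:(0,\infty)\to X$ with $|\gamma(t)|\to\infty$ and $\arg\gamma(t)\to\theta_0\in[-\pi,\pi]$. If $\theta_0\in(-\pi,0)\cup(0,\pi)$, pick $\epsilon>0$ with $\epsilon<|\theta_0|<\pi-\epsilon$; eventually $\epsilon<|\arg\gamma(t)|<\pi-\epsilon$ and $|\Im\gamma(t)|\ge|\gamma(t)|\sin\epsilon\to\infty$, so Proposition \ref{GammaNonDefinability} contradicts the definability of $\Gamma$ on the image of $\gamma$. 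Ruling out $\theta_0=\pm\pi$ (done below) leaves only $\theta_0=0$, so applying this to any maximal unbounded definable subset produces $R_0>0$ and $\alpha_0\in(0,\pi/2)$ with $X\setminus S^\infty(R_0,\alpha_0)$ bounded.

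\emph{Step 2 (bounding $A$ on $X\cap S^\infty(R_0,\alpha_0)$).} By Corollary \ref{stirling_cor}, $\varphi$ restricted to $S^\infty(R_0,\alpha_0)$ is definable in $\RR_{\mathcal{G}}$, so $A(z)=(\Re z-\tfrac12)\arg z+\Im z\,\log|z|-\Im z+\Im\varphi(z)$ is definable on the sector in $\RR_{\mathcal{G},\exp}$, and hence in $\R$. The image $A(X\cap S^\infty(R_0,\alpha_0))\subseteq\RR$ is definable in $\R$, hence a finite union of intervals and points. If it is bounded by some $B$, then $X$ differs by a bounded set from a subset of $\bigcup_{|k|\le\lceil B/(2\pi)\rceil}U_k(R_0,\alpha_0)$, which is the desired conclusion. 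If instead $A$ is unbounded on $X\cap S^\infty(R_0,\alpha_0)$ (say above), definable choice yields a definable $\delta:(t_0,\infty)\to X\cap S^\infty(R_0,\alpha_0)$ with $A(\delta(t))=t$, and since $\Gamma(\delta(t))=|\Gamma(\delta(t))|e^{iA(\delta(t))}$ and $\Gamma$ is zero-free, the ratio $\Re\Gamma(\delta(t))/|\Gamma(\delta(t))|=\cos t$ would be a function of $t\in(t_0,\infty)$ definable in $\R$, contradicting the o-minimality of $\R$.

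\emph{The main obstacle is the $\theta_0=\pm\pi$ sub-case of Step 1}, where Proposition \ref{GammaNonDefinability} does not apply. Write $\gamma(t)=x_t+iy_t$ and assume $\theta_0=\pi$ (the case $-\pi$ is analogous, working in the lower-left quadrant). If $|y_t|\ge\delta>0$ eventually, the reflection identity of Remark \ref{stirling_rmk}(3)-(4) rewrites $\varphi(z)=-\log(1-e^{2\pi iz})-\varphi(-z)$ on the upper-left quadrant; Remmert's bound applied to $\varphi(-z)$ (valid because $-z$ lies in the lower right half-plane with $|\arg(-z)|<\pi/2$), combined with $|1-e^{2\pi iz}|\ge 1-e^{-2\pi\delta}$, shows that $\varphi(\gamma(t))$ stays bounded. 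Stirling's formula then gives $A(\gamma(t))=\pi x_t+y_t\log|\gamma(t)|+O(1)$, and since $y_t/|x_t|\to 0$ one checks directly that $|A(\gamma(t))|\to\infty$, so the $\cos$-oscillation argument of Step 2 produces infinitely many sign changes of the definable function $\Re\Gamma\circ\gamma$. In the remaining sub-case $y_t\to 0$, continuity of $\Re\gamma$ and $x_t\to-\infty$ force $x_t$ to cross every sufficiently negative integer $-n$, and the principal-part expansion $\Gamma(z)\sim(-1)^n/(n!(z+n))$ at the pole $-n$ forces $\Re\Gamma(\gamma(t))$ to change sign at each crossing, again yielding infinitely many sign changes of $\Re\Gamma\circ\gamma$ and the same contradiction.
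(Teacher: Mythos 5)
Most of your outline tracks the paper's own proof: the directions $\theta_0\in(-\pi,0)\cup(0,\pi)$ are handled exactly as in the paper via Proposition \ref{GammaNonDefinability}, and your Step 2 is the paper's key step that $A$ must be bounded on $X\cap S^{\infty}(R,\alpha)$ (the paper phrases it as: otherwise the graph of the complex exponential on an $i$-unbounded region would be definable; your definable curve with $A(\delta(t))=t$ and the resulting definability of $\cos$ on a ray is a fine variant). The problem is the case you yourself single out as the main obstacle, $\theta_0=\pm\pi$.

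In the sub-case $|y_t|\ge\delta$ your claim that $y_t/|x_t|\to0$ forces $|A(\gamma(t))|\to\infty$ is false when $y_t\to+\infty$: in $A(z)=\left(x-\tfrac12\right)\arg z+y\left(\log|z|-1\right)+\Im\varphi(z)$ the two dominant terms $\pi x_t\to-\infty$ and $y_t\log|\gamma(t)|\to+\infty$ can cancel, which happens precisely along curves with $y\sim\pi|x|/\log|x|$ (so $y/|x|\to0$ and $y\ge\delta$). Indeed, by Lemma \ref{FixArgCurves} the level curves $A_\theta$ continue into the upper-left quadrant with exactly this shape, and on them $A\equiv\theta$ is constant, so your $\cos$-oscillation argument cannot be salvaged by sharper estimates; this is exactly the delicate regime, since the paper's final theorem shows $\Gamma(z)(1-e^{2\pi iz})$ \emph{is} definable in $(\RR_{\mathcal{G}},\exp)$ on unbounded sets near these curves, so the obstruction there is carried by the factor $1-e^{2\pi iz}$, not by the winding of $A$. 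The paper's proof of the corollary argues accordingly: for an unbounded $X^{+}\subseteq -S^{\infty}(R,\alpha)\cap\{\Im z>0\}$, it combines $\varphi(z)+\varphi(-z)=-\log\left(1-e^{2\pi i z}\right)$ (Remark \ref{stirling_rmk}(3),(4)) with the definability of $\varphi$ on $-X^{+}\subseteq S^{\infty}(R,\alpha)$ (Corollary \ref{stirling_cor}), so that definability of $\Gamma$ on $X^{+}$ makes an expression in $e^{2\pi i z}$ definable along a definable curve with $\Re z\to-\infty$; its zero set (e.g.\ of the imaginary part of $\log(1-e^{2\pi iz})$, equivalently where $e^{2\pi iz}$ is real) has infinitely many connected components, contradicting o-minimality, and symmetrically for $\Im z<0$. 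That argument also subsumes your second sub-case, which as written is likewise incomplete: the principal part of $\Gamma$ at $-n$ dominates only for $|z+n|\lesssim 1/\log n$, so if $y_t\to0$ more slowly than $1/\log|x_t|$ the asserted sign change of $\Re\Gamma$ at each integer crossing is not justified (the phase picks up a term of size $y_t\log|x_t|$). You should replace both sub-cases of your $\theta_0=\pm\pi$ analysis by the reflection-identity argument above.
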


\begin{proof}
    By Corollary \ref{sterling_dfbl}, the restriction of the Stirling function $\varphi$ to $S^{\infty}(R,\alpha)$ is definable in $\RR_{\mathcal{G}}$, so also in $\mathcal{R}$, for any $R>0$ and $0 < \alpha < \frac{\pi}{2}$.
    Recall that 
    \[
        \Gamma(z) = \sqrt{2\pi}e^{\left(z-\frac{1}{2}\right)\log z -z +\varphi(z)}
    \]
    for $z \in \CC\setminus\ZZ_{\le 0}$.
    Note that $\left(z-\frac{1}{2}\right)\log z -z +\varphi(z)$ and hence also $A(z)$ are definable in $\mathcal{R}$ on $S^{\infty}(R,\alpha)$.
    The set $A(X)$ must be $i$-bounded because if not, i.e., if the imaginary part of the exponent of Stirling's formula were unbounded on $X$, then
    \[
        \left\{\left(\left(z-\frac{1}{2}\right)\log z -z +\varphi(z),\frac{\Gamma(z)}{\sqrt{2\pi}}\right) : z \in X \right\}
    \]
    would define the graph of the complex exponential function on a region with unbounded imaginary part, which contradicts the o-minimality of $\mathcal{R}$.
    
    Therefore, there exist $n$, $\alpha$ and $R$ such that
    \[
        X \cap S^{\infty}(R,\alpha) \subset \big(U_{-n}(R,\alpha) \cup \cdots \cup U_{n-1}(R,\alpha)\cup U_n(R,\alpha)\big).
    \]
    By Proposition \ref{GammaNonDefinability}, 
    \[
        X \cap \left\{z \in \CC^\times : \frac{\alpha}{2} < |\arg z| < \pi - \frac{\alpha}{2}\right\}
    \]
    must be a bounded set.
    Finally, we claim that set $X \cap -S^{\infty}(R,\alpha)$ must be bounded.
    Suppose it is unbounded. Since $\Gamma$ has a pole at every non-positive integer, either $X^+:= X \cap -S^{\infty}(R,\alpha) \cap \{z : \Im z>0\}$ or $X^-:= X \cap -S^{\infty}(R,\alpha) \cap \{z : \Im z<0\}$ must be unbounded.
    By Remark \ref{stirling_rmk}(3),
    \[
        \varphi(z) + \varphi(-z) = -\log(1-e^{-2\pi i z})
    \]
    for $\Im z <0$.
    Note that $\varphi$ is definable from $\Gamma$ on $X$, and $\varphi$ is definable in $\RR_{\mathcal{G}}$ on $S^{\infty}(R,\alpha)$.
    So $-\log(1-e^{-2\pi i z})$ is definable on $-X^+$.
    If $X^+$ is unbounded, then $-\log(1-e^{-2\pi i z})$ is definable on $-X^+$. Note that the real parts of elements of $-X^+$ are unbounded since $-X^+$ is an unbounded subset of $S^{\infty}(R,\alpha)$. This means the set $\{z \in X : \Im(-\log(1-e^{-2\pi i z}))=0\}$, for instance, is a definable subset in $\mathcal{R}$ with infinitely many components, a contradiction.
    Similarly, if $X^-$ is unbounded, then $-\log(1-e^{-2\pi i z})$ is definable on $X^-$, which has unbounded real part and again gives a contradiction.
\end{proof}

\subsection{Defining another solution to \texorpdfstring{$f(z+1) = zf(z)$}{f(z+1) = zf(z)}  in \texorpdfstring{$(\RR_{\mathcal{G}},\exp)$}{RG,exp}}
We conclude this section by showing that $\Gamma$ is not the only solution of the difference equation $f(z+1)=zf(z)$ definable in $(\RR_{\mathcal{G}},\exp)$ on an unbounded domain.
Consider, for example, $g(z) := \Gamma(z) \left(1-e^{2\pi iz}\right)$. Then $g$ satisfies
\[
    g(z+1) = \Gamma(z+1)\left(1-e^{2\pi i(z+1)}\right) = z\Gamma(z)\left(1-e^{2\pi iz}\right) = zg(z)
\]
on $\CC\setminus \ZZ_{\le 0}$.
Clearly $g$ is also definable in $(\RR_{\mathcal{G}},\exp)$ when restricted to appropriate domains in $\CC$. We will show that these domains are unbounded.
To do this, we qualitatively describe the subsets of the upper left quadrant defined by $A(z) = \theta$ for $\theta \in \RR$. The methods are similar to \cite[Subsection 2.1]{GammaEC} in which the behavior of $\Gamma$ in the upper right quadrant is studied, but the information we need does not directly follow from the results there.

\begin{lemma}\label{ModIncrXDecrY}
    For each $y \ge 2$, the map $x\mapsto |\Gamma(x+iy)|$ is injective with non-vanishing derivative. For each $x \in \RR$, the map $y\mapsto |\Gamma(x+iy)|$ is injective with non-vanishing derivative on $y > 0$. 
    Moreover, $|\Gamma(x+iy)|$ grows exponentially to $+\infty$ as $x \to +\infty$ and decays exponentially to zero as $|y| \to +\infty$
\end{lemma}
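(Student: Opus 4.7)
The plan is to work with the logarithmic derivative $\Psi = \Gamma'/\Gamma$. Since $\Gamma$ is holomorphic and nonvanishing on $\CC \setminus \ZZ_{\le 0}$, we have
\[
    \frac{\partial}{\partial x}\log|\Gamma(x+iy)| = \Re\Psi(x+iy) \quad\text{and}\quad \frac{\partial}{\partial y}\log|\Gamma(x+iy)| = -\Im\Psi(x+iy)
\]
whenever $x+iy$ avoids the poles, so the injectivity / non-vanishing claims reduce to showing that $\Re\Psi$ and $\Im\Psi$ each have constant nonzero sign on the stated regions. The two cases call for different tools: the Mittag--Leffler expansion for the $y$-monotonicity, and Stirling's formula for the $x$-monotonicity.

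For the $y$-claim, the Mittag--Leffler expansion $\Psi(z) = -\gamma + \sum_{n \ge 0}\bigl(\tfrac{1}{n+1} - \tfrac{1}{n+z}\bigr)$ yields the cleanly signed series
\[
    \Im\Psi(x+iy) = \sum_{n=0}^{\infty} \frac{y}{(n+x)^2 + y^2}.
\]
Every term is strictly positive for $y > 0$ and every $x \in \RR$ (no denominator vanishes, since the poles of $\Gamma$ are confined to the real axis), so $\partial_y |\Gamma(x+iy)| < 0$ and the map $y \mapsto |\Gamma(x+iy)|$ is strictly decreasing on $(0,\infty)$, in particular injective with nonvanishing derivative.

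For the $x$-claim with $y \ge 2$, the analogous series for $\Re\Psi$ is not obviously signed, so I would instead apply Stirling's formula. Differentiating $\log\Gamma(z) = (z-\tfrac{1}{2})\log z - z + \tfrac{1}{2}\log(2\pi) + \varphi(z)$ gives $\Psi(z) = \log z - \tfrac{1}{2z} + \varphi'(z)$, and hence
\[
    \Re\Psi(x+iy) = \tfrac{1}{2}\log(x^2+y^2) - \tfrac{x}{2(x^2+y^2)} + \Re\varphi'(x+iy).
\]
For $y \ge 2$ the first term is $\ge \log 2$, the second has modulus at most $1/(4y) \le 1/8$, and a uniform bound $|\varphi'(z)| \le C/|z|^2$ on the half-plane $\{\Im z \ge 2\}$ (derived from Binet's integral formula for $\Re z > 0$ via integration by parts, and extended to $\Re z \le 0$ by iterating the functional equation $\varphi(z+1) = \varphi(z) + (z+\tfrac12)\log(1+1/z) - 1$) then forces $\Re\Psi(x+iy) > 0$ throughout this region. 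Strict monotonicity and the desired injectivity follow. The growth and decay claims in turn fall out of Stirling's formula directly: writing $|\Gamma(x+iy)| = \sqrt{2\pi}\,|z|^{x-1/2}e^{-x - y\arg z + \Re\varphi(z)}$ with $\Re\varphi(z) \to 0$, as $x \to +\infty$ the factor $|z|^{x-1/2}e^{-x}$ dominates any $e^{Kx}$, and as $|y| \to \infty$ one has $y\arg z \to \pi|y|/2$, so $e^{-\pi|y|/2}$ dominates the polynomially growing factor $|z|^{x-1/2}$.

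The main obstacle is the uniform estimate $|\varphi'(z)| \le C/|z|^2$ on the full half-plane $\{\Im z \ge 2\}$. Pointwise asymptotic expansions of $\varphi$ are standard, but the subtlety is that as $x \to -\infty$ along $y = 2$ the argument $\arg z$ approaches $\pi$, driving $z$ towards the branch cut $(-\infty,0]$ where $\varphi$ is undefined; the functional-equation shift above is the device that reduces the estimate to a half-plane where Binet applies and so keeps the constant $C$ genuinely uniform. Every other step is a direct consequence of standard properties of $\Gamma$ and $\varphi$.
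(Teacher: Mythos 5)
Your handling of the $y$-direction and of the growth/decay statements is essentially the paper's argument in different clothing: the series you obtain for $\Im\Psi$ is exactly the termwise logarithmic derivative of the Weierstrass product that the paper differentiates directly, and both you and the paper read off the exponential growth/decay from Stirling's formula together with the Remmert bound on $|\varphi|$. The genuine divergence is in the $x$-direction for $y\ge 2$: the paper estimates the same series for $\Re\Psi$ directly, bounding $-\gamma-\frac{x}{x^2+y^2}+\sum_{n\ge1}\bigl(\frac1n-\frac{n+x}{(n+x)^2+y^2}\bigr)$ below by the positive constant $\frac34-\gamma$ using $y\ge2$, whereas you bypass the series via $\Psi(z)=\log z-\frac1{2z}+\varphi'(z)$ and a uniform smallness bound on $\varphi'$. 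That route is viable and arguably cleaner, but it concentrates all the work in the estimate for $\varphi'$, and that is precisely where your write-up goes wrong.

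The bound you propose, $|\varphi'(z)|\le C/|z|^2$ on all of $\{\Im z\ge 2\}$, is false. By Remark \ref{stirling_rmk}(3), $\varphi(z)+\varphi(-z)=-\log\left(1-e^{-2\pi iz}\right)$ for $\Im z<0$; differentiating and putting $z=t-2i$ gives
\[
\varphi'(-t+2i)=\varphi'(t-2i)+\frac{2\pi i\,e^{-2\pi it-4\pi}}{1-e^{-2\pi it-4\pi}},
\]
so as $t\to+\infty$ the first term decays like $1/t^2$ but the second keeps modulus roughly $2\pi e^{-4\pi}>0$; hence no constant $C$ makes $|\varphi'(-t+2i)|\le C/t^2$ hold for large $t$. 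Your proposed derivation cannot produce such a bound either: iterating the functional equation (which should read $\varphi(z)=\varphi(z+1)+(z+\tfrac12)\log(1+1/z)-1$) requires about $|\Re z|$ shifts, and the accumulated correction terms sum to something of size comparable to $1/(\Im z)^2$, not $1/|z|^2$ --- consistent with the true behaviour above. Fortunately your argument never needs decay in $|z|$: it only needs $|\Re\varphi'(x+iy)|<\log 2-\tfrac18$ for $y\ge2$, and a correct uniform bound in terms of $\Im z$ is easy. For instance, from the Euler--Maclaurin/Binet representation $\varphi(z)=\int_0^\infty\frac{B_2-\overline{B}_2(t)}{2(z+t)^2}\,dt$, valid on $\CC\setminus(-\infty,0]$, one gets
\[
|\varphi'(z)|\le\frac14\int_0^\infty\frac{dt}{|z+t|^3}\le\frac14\cdot\frac{2}{(\Im z)^2}\le\frac18\qquad(\Im z\ge2),
\]
which is comfortably below the threshold. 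So restate your key estimate with $\Im z$ in place of $|z|$ and prove it this way (or by your shift argument, aiming only for a uniform constant); as literally written, the ``main obstacle'' lemma is false and its proposed proof would not deliver it.
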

\begin{proof}
    Write $|\Gamma(x+iy)| = \left|\frac{\exp(-\gamma (x+iy))}{x+iy}\prod_{n=1}^{\infty}\left(1+\frac{x+iy}{n}\right)^{-1}\exp\left(\frac{x+iy}{n}\right)\right|$ as an infinite product where $\gamma \approx 0.577$ is the Euler-Mascheroni constant.
    Recall that for a differentiable product of differentiable functions $f(x) = \prod_{n=0}^{\infty}f_{n}(x)$ we have $f'(x) = f(x)\sum_{n=0}^{\infty}\frac{f_{n}'(x)}{f_{n}(x)}$.
    So we compute the derivative of $x\mapsto |\Gamma(x+iy)|$ and show it is positive for all $x$ and all $y \ge 2$:
    \begin{align*}
        \frac{\partial}{\partial x}\left(|\Gamma(x+iy)|\right)
            &= \frac{\partial}{\partial x}\left(\frac{\exp(-\gamma x)}{\sqrt{x^2+y^2}}\prod_{n=1}^{\infty} \frac{\exp\left(\frac{x}{n}\right)}{\sqrt{\left(1+\frac{x}{n}\right)^2+\left(\frac{y}{n}\right)^2}}\right) \\
            &= |\Gamma(x+iy)|\left(-\gamma - \frac{x}{x^2+y^2}+\sum_{n=1}^{\infty} \left(\frac{1}{n}-\frac{n+x}{(n+x)^2+y^2}\right)\right) \\
            &\ge |\Gamma(x+iy)|\left(-\gamma - \frac{x}{x^2+4}+\sum_{n=1}^{\infty} \left(\frac{1}{n}-\frac{n+x}{(n+x)^2+4}\right)\right) \\
            &> |\Gamma(x+iy)|\left(-\gamma - \frac{1}{4} + \left(1-\frac{1}{4}\right)+\left(\frac{1}{2}-\frac{1}{4}\right)+\sum_{n=3}^{\infty} \left(\frac{1}{n}-\frac{n}{n^2+4}\right)\right) \\
            &>|\Gamma(x+iy)|\left(-\gamma + \frac{3}{4}\right)
    \end{align*}
    which is strictly positive since $|\Gamma(x+iy)|$ never vanishes.
    Next, we compute that the derivative of $y \mapsto |\Gamma(x+iy)|$ is negative for any $x \in \RR$: 
    \begin{align*}
        \frac{\partial}{\partial y}\left(|\Gamma(x+iy)|\right)
            = |\Gamma(x+iy)|\left(-\frac{y}{x^2+y^2} - \sum_{n=1}^{\infty}\frac{y}{(x+n)^2+y^2}\right) 
            <0
    \end{align*}

    Now we consider $|\Gamma(x+iy)|$ as $x \to +\infty$ and as $|y| \to +\infty$. For each $\theta \in \left(\frac{\pi}{2},\pi\right)$, there is $M_{\theta} >0$ such that if $|\arg z| \le \theta$ and $|z|>M_{\theta}$, then $|\varphi(z)| < 1$, so $\frac{1}{e}\leq|\exp(\varphi(z))|\leq e$, where $e=\exp(1)$. See \cite[Chapter 2, Section 4.2]{remmert}.
    So for $|\arg z| \le \theta$ and $|z|> M_{\theta}$ we have
    \begin{multline*}
        \frac{\sqrt{2\pi}}{e}\exp\left(\left(x-\frac{1}{2}\right)\log|z|- y\arg(z) - x\right) \leq|\Gamma(z)|  \\
            \leq e\sqrt{2\pi}\exp\left(\left(x-\frac{1}{2}\right)\log|z|- y\arg(z) - x\right).
    \end{multline*}
    It follows that $|\Gamma(x+iy)|$ tends exponentially to zero as $|y|$ tends to $+\infty$, and $|\Gamma(x+iy)|$ tends exponentially to $+\infty$ as $x$ tends to $+\infty$. 
\end{proof}

\begin{lemma}\label{FixArgCurves}
    For each $\theta\in\RR$, there is a function $y_{\theta}(x)>2$ and $r_{\theta} \in \RR$ such that for all $x< r_{\theta}$, we have $A(x+iy_{\theta}(x)) = \theta$ and the graph of $y_{\theta}$ is a single $C^1$ curve with negative slope and no vertical asymptotes.
\end{lemma}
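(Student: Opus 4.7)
My plan is to apply the implicit function theorem to the equation $A(x+iy) = \theta$, transferring sign information from Lemma \ref{ModIncrXDecrY} to the partial derivatives of $A$ via the Cauchy--Riemann equations.

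First, I would consider the holomorphic function
\[
    F(z) := \left(z - \frac{1}{2}\right)\log z - z + \varphi(z)
\]
on $\CC \setminus (-\infty, 0]$; by Stirling's formula $\Gamma(z) = \sqrt{2\pi}\, e^{F(z)}$, so $\Re F = \log(|\Gamma|/\sqrt{2\pi})$ and $\Im F = A$. The Cauchy--Riemann equations then give $\partial_y A = \partial_x \log|\Gamma|$ and $\partial_x A = -\partial_y \log|\Gamma|$, so Lemma \ref{ModIncrXDecrY} shows that both $\partial_x A$ and $\partial_y A$ are strictly positive on the strip $\{y \geq 2\}$. Wherever $A(x+iy) = \theta$ with $y > 2$, the implicit function theorem therefore produces a $C^1$ local branch $y = y_\theta(x)$ whose slope $-\partial_x A / \partial_y A$ is strictly negative.

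Next I would assemble these local branches globally. For each fixed $x$, the map $y \mapsto A(x+iy)$ is strictly increasing on $(2,\infty)$ and satisfies $A(x+iy) \to +\infty$ as $y \to +\infty$, since in the expansion
\[
    A(x+iy) = \left(x - \frac{1}{2}\right)\arg(x+iy) + y \log|x+iy| - y + \Im \varphi(x+iy)
\]
the term $y \log|x+iy|$ dominates while $\varphi$ decays. Hence for each $x$ with $A(x+2i) < \theta$ there is a unique $y_\theta(x) \in (2,\infty)$ solving $A(x+iy_\theta(x)) = \theta$, and the local branches glue into a single $C^1$ function on this set; no vertical asymptote can occur in the interior since $A$ is finite on $\{y > 2\}$. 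Monotonicity of $x \mapsto A(x+2i)$, from $\partial_x A > 0$, then identifies $r_\theta$ as the unique real number with $A(r_\theta + 2i) = \theta$, provided the set $\{x : A(x+2i) < \theta\}$ is nonempty and of the form $(-\infty, r_\theta)$.

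The main obstacle is exactly this nonemptiness: I would have to show that $A(x+2i) \to -\infty$ as $x \to -\infty$. The first two summands in the expansion behave as $\left(x - \frac{1}{2}\right)\arg(x+2i) + 2\log|x+2i| \sim \pi x + 2\log|x| \to -\infty$, but the $\Im\varphi(x+2i)$ contribution is delicate since $\arg(x+2i) \to \pi$ exits every proper subsector of $\CC \setminus (-\infty, 0]$ on which the standard bounds of \cite{remmert} apply. To handle it I would combine Remark \ref{stirling_rmk}(3) with the Schwartz reflection identity in Remark \ref{stirling_rmk}(4) to write $\varphi(z) = \varphi_2(z) - \log(1 - e^{2\pi i z})$ for $\Im z > 0$; since $\varphi_2$ extends holomorphically to $\CC \setminus (0,\infty)$ with $\varphi_2(z) \to 0$ at infinity there, and since $|e^{2\pi i z}| = e^{-4\pi}$ is a fixed constant on the line $\Im z = 2$, the correction term remains bounded, so $\Im\varphi(x+2i)$ stays bounded as $x \to -\infty$ and the linear $\pi x$ term is decisive.
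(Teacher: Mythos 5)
Your proof is correct, but its global part runs along a genuinely different route than the paper's. The local step is essentially the same ingredient in different clothing: you read off $\partial_x A>0$ and $\partial_y A>0$ on $\{y\ge 2\}$ from Lemma \ref{ModIncrXDecrY} via the Cauchy--Riemann equations for $\left(z-\frac{1}{2}\right)\log z - z + \varphi(z)$, whereas the paper gets the negative slope by conformality of $\Gamma$ (orthogonality of the $\arg\Gamma$-level curves to the positively sloped $|\Gamma|$-level curves). The real divergence is afterwards: the paper proves that $A_\theta\cap\{y>2\}$ is a single curve by a Rolle-type argument along a $|\Gamma|$-level curve (using Fact \ref{fact:fixmod} and the fact that $\Gamma'$ vanishes only on the real axis), and rules out a vertical asymptote on the left by invoking Proposition \ref{GammaNonDefinability}; you instead exploit the strict monotonicity of $y\mapsto A(x+iy)$, which makes the level set in the strip automatically the graph of the unique solution $y_\theta(x)$ over the set $\{x:A(x+2i)<\theta\}$, so uniqueness, connectedness, and absence of vertical asymptotes come for free, with $C^1$-ness and negative slope from the implicit function theorem. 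The price you pay is the boundary analysis that the paper's route avoids: you must show $A(x+2i)\to-\infty$ as $x\to-\infty$, and you correctly identify that the standard Stirling bound fails as $\arg z\to\pi$, handling $\Im\varphi(x+2i)$ via the identity $\varphi(z)=\varphi_2(z)-\log(1-e^{2\pi i z})$ for $\Im z>0$ obtained from Remarks \ref{stirling_rmk}(3) and (4); that step is sound, since $|e^{2\pi i z}|=e^{-4\pi}$ on $\Im z=2$ and $\varphi_2(z)=-\varphi(-z)\to 0$ there. What your approach buys is a more elementary and self-contained global argument that moreover pins down the domain exactly as $(-\infty,r_\theta)$ with $A(r_\theta+2i)=\theta$; what the paper's buys is that it never needs the behaviour of $A$ along $\Im z=2$ near the negative real axis. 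Two small touch-ups: restrict the claim ``$\varphi_2\to 0$ at infinity on $\CC\setminus(0,\infty)$'' to sectors away from the positive real axis (that is all you use, and it is what follows from $\varphi_2(z)=-\varphi(-z)$ and the bound in \cite{remmert}), and add the one-line observation that $A(x+2i)\to+\infty$ as $x\to+\infty$ if you want $r_\theta$ finite as in the literal statement (the paper's own proof allows $r_\theta=+\infty$).
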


\begin{proof}
    We first use Lemma \ref{ModIncrXDecrY} to describe the $|\Gamma|$-level curves. This information will help us describe the $\arg \Gamma$-level curves because $\Gamma$ is a conformal map.
    Let $a,b \in \RR$ with $b >2$. By Lemma \ref{ModIncrXDecrY}, $\displaystyle \frac{\partial |\Gamma(x+iy)|}{\partial y}(a + ib) \ne 0$. By the implicit function theorem, there is a unique $C^1$ function $y(x)$ such that $\mathrm{graph}(y)=\{z : |\Gamma(z)| = |\Gamma(a+ib)|\}$ in a neighborhood of $a+ib$. Also by Lemma \ref{ModIncrXDecrY}, $|\Gamma(x+iy)|$ strictly decreases as $y \to +\infty$ and strictly increases as $x \to +\infty$. So by the intermediate value theorem, $y'(a+ib) >0$.

    Let $\theta = A(a+ib)$, so that $\arg \Gamma(a+ib) = \theta \mod 2\pi$, and let $A_{\theta} = \{z : A(z) = \theta\}$.
    Since $\Gamma'|_{\CC \setminus (-\infty,x_0]}$ does not vanish, $\Gamma$ is conformal at $a+ib$. So there is a neighborhood $U$ of $a+ib$ such that $U \cap A_{\theta}$ is a curve $C$ which intersects $\mathrm{graph}(y)$ at $a+ib$ at a right angle. 
    Since $y'(a+ib)>0$, $C$ must pass through $a+ib$ with negative slope. In particular, $\frac{\partial \arg\Gamma(x+iy)}{\partial y}(a+ib) \ne 0$ so we can apply the implicit function theorem to obtain a unique $C^1$ function $y_\theta(x)$ such that $\mathrm{graph}(y_{\theta}) = A_{\theta}$ in a neighborhood of $a+ib$ and $y_{\theta}'(a+ib)<0$.

    The above argument shows $\frac{\partial \arg\Gamma(x+iy)}{\partial y}<0$ on $\{x+iy : y>2\}$. So the only barrier to extending the domain of $y_{\theta}$ on the left to $(-\infty,a)$ is if $y_{\theta}$ has a vertical asymptote.
    But if $y_{\theta}$ had a vertical asymptote, Proposition \ref{GammaNonDefinability} would imply that $A(z)$ is unbounded along $\mathrm{graph}(y_{\theta})$, which contradicts that $y_{\theta}$ is contained in $A_{\theta}$.
    So $y_{\theta}$ cannot have any vertical asymptotes, and the domain of $y_{\theta}$ can be extended to $(-\infty,a)$.
    Similarly, the domain of $y_{\theta}$ can be extended on the right unless $\mathrm{graph}(y_{\theta})$ intersects the line $y=2$. Let $r_{\theta}$ be the real part of this point of intersection if it exists, or $+\infty$ otherwise.

    Finally, we show that $A_{\theta} \cap \{x+iy : y>2\}$ consists of a single $C^1$ curve. Suppose toward a contradiction that $a^*+ib^* \in A_{\theta} \setminus \mathrm{graph}(y_{\theta})$ and $b^*>2$. Then there is a function $y_{\theta}^* : (-\infty,r_{\theta}^*) \to A_{\theta}$ whose graph contains $a^*+ib^*$.
    The graphs of $y_{\theta}$ and $y_{\theta}^*$ do not intersect because $A_{\theta}$ is locally the graph of a function, so without loss of generality, suppose $y_{\theta}(x) < y_{\theta}^*(x)$ for all $x <r_{\theta}$.
    Let $C$ be a $|\Gamma|$-level curve that intersects $\mathrm{graph}(y_{\theta})$ at some point $z_0$. 
    We claim $C$ also intersects $\mathrm{graph}(y_{\theta}^*)$. If not, then $C$ must approach a horizontal asymptote as $x$ tends to $+\infty$ because its slope is positive and $\mathrm{graph}(y_{\theta}^*)$ has negative slope. But by Fact \ref{fact:fixmod}, $C$ does not approach a horizontal asymptote in the upper right quadrant. So $C$ intersects the graph of $y_{\theta}^*$ at $z_1$, say. 
    Now let $\gamma: [0,1] \to C$ be a $C^1$ function parametrizing $C$ between $z_0$ and $z_1$. Then $A(\gamma(0)) = A(\gamma(1))$, and Rolle's theorem implies that $A'(\gamma(s))=0$ for some $0<s<1$. But then $\Gamma'(\gamma(s)) = 0$, since $C$ is a $|\Gamma|$-level curve and $(|\Gamma(\gamma(t))|)' = 0$ for all $t \in (0,1)$. This cannot be since all the zeroes of $\Gamma'$ lie along the real axis.
    So we must have $A_{\theta} \cap \{x+iy : y > 2\} = \mathrm{graph}(y_{\theta})$.
\end{proof}

\begin{theorem}
    The function $\Gamma(z)(1-e^{2\pi i z})$ is definable in $(\RR_{\mathcal{G}},\exp)$ on an unbounded complex domain.
\end{theorem}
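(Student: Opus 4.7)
The plan is to derive a closed-form expression for $g$ on the upper half-plane that eliminates the problematic term $\varphi(z)$, replacing it with $\varphi(-z)$, which is definable in a suitable sector. Starting from Remark~\ref{stirling_rmk}(3), $\varphi(z)+\varphi(-z) = -\log(1-e^{-2\pi iz})$ for $\Im z<0$, and applying Schwartz reflection (Remark~\ref{stirling_rmk}(4)) yields the companion identity
\[
\varphi(z)+\varphi(-z) \;=\; -\log\!\bigl(1-e^{2\pi iz}\bigr) \quad \text{for } \Im z>0,
\]
where $\log$ is the principal branch (well-defined since $|e^{2\pi iz}|<1$ puts $1-e^{2\pi iz}$ in the right half-plane). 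Exponentiating and substituting into Stirling's formula $\Gamma(z) = \sqrt{2\pi}\,e^{(z-1/2)\log z -z+\varphi(z)}$ immediately gives
\[
g(z) \;=\; \sqrt{2\pi}\,e^{\Phi(z)}, \qquad \Phi(z):=\bigl(z-\tfrac12\bigr)\log z - z - \varphi(-z),
\]
valid for every $z$ with $\Im z>0$. The crucial feature is that $\varphi(z)$ has cancelled and only $\varphi(-z)$ remains.

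Next, fix $\alpha\in(0,\pi/2)$ and $R > e^{\pi/\alpha}$, and consider the unbounded upper-left sector
\[
\Omega:=\bigl\{z\in\CC:\ |z|>R,\ \arg z\in(\pi-\alpha,\pi)\bigr\}.
\]
For $z\in\Omega$ the point $-z$ lies in $S^{\infty}(R,\alpha)$, so $\varphi(-z)$ is definable in $\RR_{\G}$ on $\Omega$ by Corollary~\ref{stirling_cor}; combined with the definability of $\log z$ off $(-\infty,0]$ in $\RR_{\an,\exp}$, this makes $\Phi$ definable in $(\RR_{\G},\exp)$ on $\Omega$. Partitioning $\Omega$ by level sets of $\Im\Phi$, set
\[
\Omega_n:=\bigl\{z\in\Omega:\ 2\pi n\le\Im\Phi(z)<2\pi(n+1)\bigr\} \qquad (n\in\ZZ);
\]
on $\Omega_n$ the quantity $\Im\Phi-2\pi n$ takes values in $[0,2\pi)$, so the restrictions of $\cos$ and $\sin$ to this interval are definable in $\RR_{\an}$, and the formula
\[
g(z) \;=\; \sqrt{2\pi}\,e^{\Re\Phi(z)}\bigl(\cos(\Im\Phi(z)-2\pi n)+i\sin(\Im\Phi(z)-2\pi n)\bigr)
\]
shows that $g$ is definable on $\Omega_n$ in $(\RR_{\G},\exp)$.

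The main obstacle is to show that some $\Omega_n$ is unbounded. A direct computation gives $\Im\Phi = A + \arg(1-e^{2\pi iz})$, and since $\arg(1-e^{2\pi iz})\to 0$ as $\Im z\to\infty$, the level curves $\{\Im\Phi=2\pi n\}$ are asymptotic for large $\Im z$ to those of $A$, which by Lemma~\ref{FixArgCurves} extend to $x=-\infty$ without vertical asymptote. Equivalently, parametrising $z=|z|e^{i(\pi-\delta)}$ with $\delta\in(0,\alpha)$ and using $\Im\varphi(-z)=o(1)$ as $|z|\to\infty$, one finds $\Im\Phi(z) = |z|(\delta\log|z|-\pi)+O(1)$, so the equation $\Im\Phi=2\pi n$ is solved asymptotically by $\delta\sim\pi/\log|z|$; the choice $R>e^{\pi/\alpha}$ guarantees this curve remains inside $\Omega$ as $|z|\to\infty$. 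The region $\Omega_n$ is therefore trapped between two curves that both escape to infinity, hence is itself unbounded, completing the proof.
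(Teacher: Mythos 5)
Your proposal is correct, and its overall skeleton coincides with the paper's: the same reflected Stirling identity $\varphi(z)+\varphi(-z)=-\log(1-e^{2\pi iz})$ for $\Im z>0$ (obtained from Remark \ref{stirling_rmk}(3) via Remark \ref{stirling_rmk}(4)), the same closed form $\Gamma(z)(1-e^{2\pi iz})=\sqrt{2\pi}\,e^{(z-1/2)\log z - z-\varphi(-z)}$ on the upper-left sector where $\varphi(-z)$ is definable by Corollary \ref{stirling_cor}, and the same partition into strips where the imaginary part of the exponent lies in an interval of length $2\pi$ so that restricted sine and cosine (together with $\exp$) give definability. Where you genuinely diverge is the unboundedness of the strips: the paper squeezes each level set $B_\theta$ of $\Im\Phi$ between the $\arg\Gamma$-level curves $A_{\theta\pm 2e^{-4\pi}}$ and then invokes Lemma \ref{FixArgCurves} (the curves extend to $x\to-\infty$ with no vertical asymptote) together with Proposition \ref{GammaNonDefinability} (their intersection with any intermediate sector $\{\epsilon<|\arg z|<\pi-\epsilon\}$ is bounded, so their unbounded part lies in $-S^{\infty}(R,\alpha)$), whereas you evaluate $\Im\Phi$ directly along rays $z=re^{i(\pi-\delta)}$, getting $\Im\Phi=r(\delta\log r-\pi)+O(1)$, and conclude by an intermediate-value argument that each large circle of radius $r$ meets $\{\Im\Phi=2\pi n\}$ inside the sector at some $\delta(r)\sim\pi/\log r$. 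Your computation is more elementary and self-contained (it bypasses Lemma \ref{FixArgCurves} entirely, and your hypothesis $R>e^{\pi/\alpha}$ is not even needed, since $\delta(r)\to 0$); the paper's route, by contrast, yields finer geometric information, locating each strip between explicit $\arg\Gamma$-level curves, consistent with its description of the regions $U_n$. One caution: the first sentence of your unboundedness paragraph, which only cites Lemma \ref{FixArgCurves} and "asymptotic" closeness to the $A$-level curves, would not by itself suffice --- knowing the curves reach $x=-\infty$ does not show they eventually enter the sector $\Omega$ rather than lingering where $\pi/2<\arg z<\pi-\alpha$ (the paper needs Proposition \ref{GammaNonDefinability} precisely for this) --- but your subsequent ray computation supplies exactly this missing ingredient, so the proof stands.
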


\begin{proof}
    Let $R>0$ and $0 < \alpha < \frac{\pi}{2}$.
    By Remark \ref{stirling_rmk}(3) the function $-\varphi(z) = \varphi(-z) +\log(1-e^{-2\pi i z})$ is definable in $\RR_{\mathcal{G}}$ on $S^{\infty}(R,\alpha) \cap \{z : \Im z<0\}$, or equivalently, the function $-\varphi(-z) = \varphi(z) +\log(1-e^{2\pi i z})$ is definable on $-S^{\infty}(R,\alpha) \cap \{z : \Im z>0\}$.
    For $\Im z > 0$, define
    \[
        g(z) := \sqrt{2\pi}e^{\left(z-\frac{1}{2}\right)\log z-z-\varphi(-z)} = \sqrt{2\pi}e^{\left(z-\frac{1}{2}\right)\log z-z+\varphi(z) +\log(1-e^{2\pi i z}))} = \Gamma(z)(1-e^{2\pi i z}).
    \]
    Then $g$ is definable in $(\RR_{\mathcal{G}},\exp)$ on the domain
    \[
        V_n := \left\{z \in -S^{\infty}(R,\alpha) : \Im z>0 \text{ and }2\pi n \le \Im\left(\left(z-\frac{1}{2}\right)\log z - z -\varphi(-z)\right) < 2\pi(n+1) \right\}
    \]
    for each $n \in \ZZ$.

    We now show that each $V_n$ is unbounded.
    For $\theta \in \RR$, define $B_{\theta}$ to be the set given by fixing the imaginary part of the exponent in the definition of $g$ to be $\theta$:
    \[
        B_{\theta} := \left\{z : \Im\left(\left(z-\frac{1}{2}\right)\log z-z-\varphi(-z)\right) = \theta\right\}.
    \]
    If $z \in B_{\theta}$, then $\arg g(z) = \theta \mod 2\pi$.
    Recall from Lemma \ref{FixArgCurves} that for each $\theta \in \RR$, $A(z)=\theta$ defines a single unbounded curve $A_{\theta}$ with negative slope in the region $\{z : \Im z >2\}$. 
    Note that $\arg g(z) = \arg \Gamma(z) + \arg(1-e^{2\pi i z})$. Also, $|e^{2\pi i z}| < e^{-4\pi}$ and $|\arg(1-e^{2\pi i z})|<2e^{-4\pi}$ for $\Im z > 2$.
    So in the region $\{z : \Im z>2\}$, $B_{\theta}$ is bounded between the curves $A_{\theta-2e^{-4\pi}}$ and $A_{\theta+2e^{-4\pi}}$.
    By Lemma \ref{GammaNonDefinability}, the intersection $A_{\theta} \cap \{z \in \CC^{\times} : \epsilon < |\arg z| < \pi - \epsilon\}$ must be a bounded set for any $\epsilon > 0$.
    So all but possibly a bounded piece of each curve $A_{\theta}$ is contained in $-S^{\infty}(R,\alpha)$.
    Thus each $V_n$ is an unbounded set.
\end{proof}

\bibliographystyle{plain}
\bibliography{bibliography}

\end{document}